\documentclass[11pt,reqno,oneside]{amsart}

\usepackage[utf8]{inputenc}
\usepackage[english]{babel}
\usepackage[a4paper, total={6in, 9in}]{geometry}
\usepackage{pgf,tikz,pgfplots} \pgfplotsset{compat=newest}
\usetikzlibrary{arrows}
\usetikzlibrary{patterns}
\usetikzlibrary{arrows.meta}
\usepackage{upgreek}
\usepackage{amsmath, amscd, amssymb, amsthm, mathrsfs}
\usepackage[abbrev,nobysame,alphabetic]{amsrefs}
\usepackage{xcolor}
\usepackage{cases}
\usepackage{hyperref}
\newtheorem{thm}{Theorem}[section]
\newtheorem*{thm*}{Theorem}
\newtheorem{lem}[thm]{Lemma}

\newtheorem{prop}[thm]{Proposition}
\theoremstyle{definition}

\newtheorem{rem}{Remark}[section]
\newtheorem*{rem*}{Remark}
\numberwithin{equation}{section}

\usepackage[pagewise]{lineno}

\newcommand{\R}{\mathbb{R}}
\newcommand{\Rn}{{\mathbb{R}^n}}

\newcommand{\dif}[1]{\,\mathrm{d}{#1}}

\newcommand{\norm}[1]{\lVert #1 \rVert}

\allowdisplaybreaks

\title[Lipschitz stability simultaneous recovery]{Lipschitz stability in the simultaneous recovery of wave speed and initial source}

\author{Shiqi Ma}
\address{School of Mathematics, Jilin University, Changchun 130012, China}
\email{mashiqi@jlu.edu.cn, mashiqi01@gmail.com}

\author{Qi Zhai}
\address{School of Mathematics, Jilin University, Changchun 130012, China}
\email{zhaiqi0127@hotmail.com}

\begin{document}

\begin{abstract}
	
We consider a inverse problem for a hyperbolic equation. The objective is to simultaneous recover both the spatially varying coefficient, whose square root represents the wave speed, and the initial displacement from a single pair of Dirichlet-Neumann Cauchy data. By introducing a suitable transformation and differentiating the resulting equation in time, we derive a coupled system for the differences of the unknown coefficient and initial displacement. Combining weighted energy estimates with Carleman estimates, we establish a Lipschitz stability estimate for both unknown quantities in appropriate Sobolev norms. The proof relies on \textit{a priori} regularity assumptions, non-degeneracy conditions on the initial data, and structural inequalities controlling the interaction between the coefficient and initial-displacement differences. Our result extends coefficient-only stability results to a simultaneous coefficient--initial-displacement identification problem.

\medskip

\noindent{\bf Keywords:}~~Inverse problems, simultaneous recovery, Lipschitz stability, Carleman estimates.

\medskip

{\noindent{\bf 2020 Mathematics Subject Classification:}~~35R30, 35Q60, 35J05, 31B10, 78A40}.

\end{abstract}

\maketitle


\section{Introduction} \label{sec:intro-SS26}

Let $\Omega$ be a bounded domain in $\Rn$ with a piecewise $C^6$-smooth boundary $\partial \Omega$.
Fix $T > 0$ and denote $Q_T= (0,T) \times \Omega$, $\Sigma_T := (0,T) \times \partial \Omega$.
Let $p(x)\in C^5(\bar{\Omega})$ with $p(x) \geq C > 0$ for some constant $C$.
We investigate the following hyperbolic initial-boundary value problem:
\begin{equation*} \label{eq:1-SS26}
    \left\{\begin{aligned}
        \partial_t^2 u(t,x) - \nabla \cdot ( p(x) \nabla u(t,x)) & = 0, && (t,x) \in Q_T , \\
        u(t,x) & = g(t,x), && (t,x) \in \Sigma_T , \\
        u(0,x) = f(x), \ \partial_t u(0,x) & = 0, && x \in \Omega,
    \end{aligned}\right.
\end{equation*}

Choose an $x_0 \notin \overline{\Omega}$ satisfying
\(
c_0^2 := \min_{x \in \overline{\Omega}} |x - x_0|^2 \ge 1,
\)
and denote
\begin{equation}\label{d}
d^2=\max_{x\in\overline{\Omega}}|x-x_0|^2.
\end{equation}
Let $a$ and $M$ be two positive constants such that $a>1$ and $M>1$. Following the approach in \cite{Klibanov2006Lipschitz}, we define the class of functions $W(a,M,x_0)$ by 
\begin{equation} \label{1.2}
W(a,M,x_0) =
\left\{
\begin{aligned}
p \in C^{5}(\overline{\Omega}) :\ & a^{-1} \leq p(x) \leq 1,\ \|p\|_{C^5(\overline{\Omega})} \leq M, \\
& (x - x_0, \nabla p) \leq 0, 
\end{aligned}
\right\},
\end{equation}
where $(\cdot,\cdot)$ denotes the scalar product in $\mathbb{R}^n$. We write $h=\partial_\nu u$, where $\nu$ denotes the outward unit normal vector on $\partial\Omega$. We are now ready to state the main result of this work.

\begin{thm} \label{thm}
Let $n \in [1,13]$ be an integer. Suppose that there exist two triples $(p_1,f_1,u_1)$ and $(p_2,f_2,u_2)$ satisfying \eqref{eq:1-SS26} and
\begin{align}
p_1 |_{\partial\Omega} &= p_2 |_{\partial\Omega}, \quad
\frac{\partial^k p_1}{\partial \nu^k} \Big|_{\partial\Omega} = \frac{\partial^k p_2}{\partial \nu^k} \Big|_{\partial\Omega}, \quad k=1,2, \label{eq:con1}
\\
f_1 |_{\partial\Omega} &= f_2 |_{\partial\Omega}, \quad
\frac{\partial^k f_1}{\partial \nu^k} \Big|_{\partial\Omega} = \frac{\partial^k f_2}{\partial \nu^k} \Big|_{\partial\Omega}, \quad k=1,2,3, \label{eq:con2}
\\
u_i |_{\Sigma_T} &= g_i(t,x), \quad \frac{\partial u_i}{\partial \nu} \Big|_{\Sigma_T} = h_i(t,x), \quad i=1,2,\nonumber
\end{align}
for certain functions $g_i$, $h_i$.
Also, we assume that there exist a point $x_0 \notin \overline{\Omega}$ and numbers $a > 1$, $M>1$ such that the function $f_2$ satisfies
\begin{equation*}
	\min_{x \in \overline{\Omega}} (x - x_0, \nabla f_2(x))  > 0,
\end{equation*}
and
\begin{equation*}
	p_1,p_2 \in W(a,M,x_0).
\end{equation*}
Suppose $a<2$, and $ m\triangleq \inf_{x\in \Omega} |\nabla p_1|>0$. Denote $A=\Delta(\nabla (p_1-p_2) \cdot \nabla f_2)$ and $C=p_1\Delta (p_1^2\Delta (f_1-f_2))$. We assume that
\begin{align} 
    |A|\leq \frac{1}{8a^8} |C|,\quad \text{in } \Omega \label{1.901}
\end{align}
Denoting
\[T_0=\max\{\frac{4c_0(1+M)}{2d^{1/4}m^{1/2}},\frac{32c_0}{m^{3/2}d^{3/4}},\frac{4M^{1/2}d^{1/4}(d^2-c_0^2)}{c_0}\}\]
assume $T_0<T$
and $u_i \in C^6(\overline{Q_T})$, 
\begin{equation*}
	\|u_i\|_{C^6(\overline{Q_T})} \leq M_1,
\end{equation*}
where $M_1$ is a positive constant. Then there exists a positive constant $N$ depending on $a$, $M$, $M_1$, $\Omega$, $x_0$, $T$, $\norm{f_1}_{C^4(\overline\Omega)}$, $\norm{f_2}_{C^4(\overline\Omega)}$, $\min_{x\in\overline\Omega}(x-x_0,\nabla f_2)$ and $\inf_{x\in\Omega}|\nabla p_1|$ such that the following Lipschitz stability estimate holds:
\begin{equation*}
    \begin{aligned}
        \|p_1 - p_2\|^2_{H^2(\Omega)} + \|f_1 -f_2\|^2_{H^1(\Omega)} \leq N (\left\| \partial_t^4(g_1 - g_2) \right\|^2_{H^{1}(\Sigma_T)} + \left\| \partial_t^4(h_1-h_2) \right\|^2_{L^{2}(\Sigma_T)}).
    \end{aligned}
\end{equation*}
\end{thm}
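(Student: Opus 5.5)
We follow the Bukhgeim--Klibanov strategy as adapted in \cite{Klibanov2006Lipschitz}, modified to handle two unknowns. Set $\tilde p = p_1-p_2$, $\tilde f=f_1-f_2$, $v=u_1-u_2$. Subtracting the two equations gives
\[
\partial_t^2 v - \nabla\cdot(p_1\nabla v) = \nabla\cdot(\tilde p\,\nabla u_2),\qquad v(0,\cdot)=\tilde f,\quad \partial_t v(0,\cdot)=0,
\]
with Cauchy data $(g_1-g_2,h_1-h_2)$ on $\Sigma_T$. Because the initial displacement is also unknown, we cannot evaluate the source term at $t=0$ directly. Instead we first extend $v$ evenly to $(-T,T)$, which is legitimate since $\partial_t v(0)=0$. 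We then differentiate in time. Set $w=\partial_t^2 v$. At $t=0$ we have $w(0)=\nabla\cdot(p_1\nabla\tilde f)+\nabla\cdot(\tilde p\nabla f_2)$. Differentiating twice more gives
\[
\partial_t^2 v^{(4)}(0)=\nabla\cdot\bigl(p_1\nabla\,\nabla\cdot(p_1\nabla w(0))\bigr)+\nabla\cdot\bigl(\tilde p\,\nabla\,\nabla\cdot(p_2\nabla f_2)\bigr)\text{-type terms}.
\]
These identities explain the appearance of $\partial_t^4$ in the data norm, and they show how the combinations $A=\Delta(\nabla\tilde p\cdot\nabla f_2)$ and $C=p_1\Delta(p_1^2\Delta\tilde f)$ arise as the leading terms of $\partial_t^4 v(0)$ and related quantities. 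The boundary conditions \eqref{eq:con1}--\eqref{eq:con2} guarantee that $\tilde p,\tilde f$ and enough of their normal derivatives vanish on $\partial\Omega$. This allows integration by parts without boundary terms, and it lets the elliptic operators $\Delta$ and $\nabla\cdot(\cdot\nabla f_2)$ be inverted with $H^2$/$H^1$ control.

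Next we apply the Carleman estimate for $\partial_t^2-\nabla\cdot(p_1\nabla\cdot)$ with weight $\varphi=|x-x_0|^2-\eta t^2$, $e^{2\lambda\varphi}$. Pseudoconvexity follows from $p_1\in W(a,M,x_0)$, and the condition $a<2$ is the standard one for this weight. The estimate is applied to $z=\chi(t)\,\partial_t^k v$, where $k\le 4$ and $\chi$ is a cutoff equal to $1$ near $t=0$. The condition $T>T_0$ ensures that $\varphi<c_0^2-\delta$ near $t=\pm T$, so the cutoff errors are exponentially small relative to the weight at $t=0$. The right-hand sides are $\nabla\cdot(\tilde p\nabla\partial_t^k u_2)$, which is bounded pointwise by $|\tilde p|+|\nabla\tilde p|+|\nabla^2\tilde p|$ using $M_1$. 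Boundary terms are controlled by the $\partial_t^4$ data norms. We then use the standard energy identity at $t=0$:
\[
\int_\Omega e^{2\lambda\varphi(0,x)}|z(0)|^2\,dx\lesssim \int_{Q}\lambda\,e^{2\lambda\varphi}(|z_t|^2+|z|^2)+\dots
\]
This bounds the weighted $L^2$ norms of $\partial_t^2 v(0)$ and $\partial_t^4 v(0)$ by $\lambda^{-1/2}$ times weighted norms of $\tilde p$ and $\tilde f$ and their derivatives, plus the data.

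The last and hardest step is recovering $\tilde p$ and $\tilde f$ separately from these initial-time quantities, which form a coupled elliptic system. From $\partial_t^4 v(0)$ we obtain $C+A+(\text{lower order})$. Assumption \eqref{1.901} gives $|C+A|\ge (1-\tfrac1{8a^8})|C|$, so $C$, and hence $\Delta\tilde f$ via the weighted elliptic Carleman estimate, is controlled by the data plus lower-order terms. Then $\partial_t^2 v(0)$ gives the first-order transport equation $\nabla\tilde p\cdot\nabla f_2+\tilde p\,\Delta f_2=w(0)-\nabla\cdot(p_1\nabla\tilde f)$. Since $(x-x_0,\nabla f_2)>0$, we can use the standard Carleman estimate for first-order operators with weight $e^{2\lambda|x-x_0|^2}$ to get weighted $H^2$ bounds on $\tilde p$ (after differentiating twice). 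Here $m=\inf|\nabla p_1|$ and the explicit constants in $T_0$ enter to absorb the cross terms.

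Finally, taking $\lambda$ large absorbs all the $\tilde p,\tilde f$ terms on the right. This is where the dimensional restriction $n\le 13$, through Sobolev embeddings of the $C^6$ bounds, and the precise constant $1/(8a^8)$ must be tracked. Dropping the weights, which are bounded above and below on $\overline\Omega$, yields the Lipschitz estimate. I expect the closing absorption in this coupled system to be the main obstacle: the $\tilde f$-terms entering the $\tilde p$ transport estimate carry the same power of $\lambda$, and they must be dominated using \eqref{1.901} rather than by large $\lambda$ alone.
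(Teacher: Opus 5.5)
Your overall architecture matches the paper's: differentiate in time so that the value at $t=0$ exposes $C$ and $A$, use a hyperbolic Carleman estimate that exploits the $t=0$ information, use \eqref{1.901} to isolate $C$, apply the Laplacian Carleman estimate twice to $\tilde f$ using \eqref{eq:con2}, and handle the cutoff terms by energy estimates. The gap is the recovery of $\tilde p$.

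You read $\tilde p$ off $\partial_t^2 v(0)$ through $\nabla\tilde p\cdot\nabla f_2+\tilde p\,\Delta f_2=w(0)-\nabla\cdot(p_1\nabla\tilde f)$, but all your plan gives you about $w(0)$ is a weighted $L^2$ bound. From that, a first-order Carleman estimate yields only $\lambda^2\int_\Omega\tilde p^{\,2}e^{2\lambda|x-x_0|^2}$ and the derivative of $\tilde p$ along $\nabla f_2$. It does not control $\nabla\tilde p$ or $\nabla^2\tilde p$. Applying it to $\partial_{ij}\tilde p$ (``differentiating twice'') needs a weighted bound on $\nabla^2 w(0)$, and nothing in your plan provides one.

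Yet that control is exactly what closing requires. The source $\nabla\cdot(\tilde p\nabla\partial_t^k u_2)$ contains $\nabla\tilde p$. Besides $A$, $\partial_t^4 v(0)$ also contains second derivatives of $\tilde p$ with $O(1)$ coefficients. So after squaring you must bound $\int|\nabla^2\tilde p|^2e^{2\lambda|x-x_0|^2}$ by $o(1)$ times $\int|C|^2e^{2\lambda|x-x_0|^2}$ as $\lambda\to\infty$. This is the obstacle you sense at the end. In your route \eqref{1.901} cannot help with it, because $A$ never enters the transport step.

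The missing ingredient is an estimate with a $\lambda$-gain at the level of $\nabla^2\tilde p$, driven by the third-order quantity $A$. In the paper both unknowns come from the single value $v_t(0)=\mathcal D=C+p_1p_2A+(\text{terms of order}\le 2\text{ in }q,\hat f)$.
\begin{itemize}
\item \eqref{1.901} isolates $C$ (Lemma~\ref{lemma-3.4}).
\item Isolating $A$ instead, via \eqref{1.432}, bounds $\int|A|^2\mathcal C^2$.
\item Klibanov's third-order estimate, Lemma~\ref{lemma-4}, for $u\mapsto\Delta(\nabla u\cdot\nabla f_2)$ then gives $\lambda\int\sum_{i,j}q_{ij}^2\mathcal C^2+\lambda^3\int(|\nabla q|^2+\lambda^2q^2)\mathcal C^2\le N\int|A|^2\mathcal C^2$. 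This is where $(x-x_0,\nabla f_2)>0$ and $q=\partial_\nu q=\partial_\nu^2 q=0$ on $\partial\Omega$ are used, and it absorbs every $q$-term.
\end{itemize}

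Your transport idea can be repaired by supplying the missing bound. We have $\partial_t^4v(0)=\nabla\cdot(p_1\nabla w(0))+\nabla\cdot(\tilde p\nabla\partial_t^2u_2(0))$, and $w(0)$ and $\nabla w(0)$ vanish on $\partial\Omega$ by \eqref{eq:con1}--\eqref{eq:con2}. So the Laplacian Carleman estimate plus a weighted $H^2$ estimate controls $\int|\nabla^2 w(0)|^2e^{2\lambda|x-x_0|^2}$ by $N\lambda$ times the weighted norms of $\partial_t^4 v(0)$, $\tilde p$ and $\nabla\tilde p$. There is an analogous bound for $\nabla^2\nabla\cdot(p_1\nabla\tilde f)$ in terms of $N\lambda\int|C|^2e^{2\lambda|x-x_0|^2}$. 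With both, the transport estimate on $\partial_{ij}\tilde p$ gains one power of $\lambda$ and closes. But this is in effect a proof of Lemma~\ref{lemma-4}, and your separate Carleman estimate for $\partial_t v$ becomes superfluous.

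A minor point: $n\le 13$ comes from the Carleman estimates of Lemmas~\ref{lemma-1} and~\ref{lemma-5}, not from Sobolev embeddings.
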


\begin{rem}
Condition \eqref{1.901} is a structural assumption on the unknown differences $p_1-p_2$ and $f_1-f_2$ (rather than on the observed data) that breaks the symmetry between the two unknowns. Indeed, by \eqref{mathcalD} the initial datum $\bar v_t(0,\cdot)=\mathcal D(p_1,p_2,f_1,f_2)$ decomposes as
\[
\mathcal D = p_1\Delta(p_1^2\Delta(f_1-f_2)) + p_1p_2\Delta(\nabla(p_1-p_2)\cdot\nabla f_2) + (\text{lower order}) =: C + p_1p_2 A + (\text{lower order}),
\]
where $A$ and $C$ are as in \eqref{1.901}. From the elementary inequality $(X+Y)^2\ge\frac12 X^2-Y^2$ one may isolate either $C$ or $p_1p_2A$, but not both at once: the two lower bounds $\frac12 C^2-(p_1p_2A)^2$ and $\frac12(p_1p_2A)^2-C^2$ cannot be simultaneously positive unless one of the two leading terms is small relative to the other. Condition \eqref{1.901} requires precisely that the $q$-information be pointwise dominated by the $f$-information, which allows one to recover $f_1-f_2$ first and then $p_1-p_2$ through the wave equation. The pointwise condition may be relaxed to the weighted integral form
\[
\int_{\Omega}\bigl|\Delta(\nabla(p_1-p_2)\cdot\nabla f_2)\bigr|^2 e^{2\lambda|x-x_0|^2}\dif x \le \frac{1}{16}\int_{\Omega}\bigl|p_1\Delta(p_1^2\Delta(f_1-f_2))\bigr|^2 e^{2\lambda|x-x_0|^2}\dif x,
\]
which already suffices for the absorption carried out in the proof of Lemma \ref{lemma-3.4}. A typical admissible configuration is obtained by taking $f_1-f_2$ sufficiently oscillatory so that $\Delta(p_1^2\Delta(f_1-f_2))$ is large, while $p_1-p_2$ remains comparatively smooth.
\end{rem}

To guarantee that functions $u_1,u_2\in C^6(\overline{Q_T})$, one needs to impose some compatibility and smoothness conditions on the functions $f_1$, $f_2$ and coefficients $p_1$, $p_2$, see, e.g., \cite{Ladyzhenskaya1985}*{Chapter 4}.
We are not formulating such conditions in this article, since we are focused exclusively on the inverse problem. To simplify the presentation, we are not concerned here with weakening smoothness conditions.

Our work uses Carleman estimates and weighted energy estimates to prove the result.
Carleman estimates were introduced in the field of inverse problems for the first time by Bukhgeim and Klibanov in \cite{BukhgeimKlibanov}.
In that paper, the authors used Carleman estimates to give stability, and this was summarized in \cite{Klibanov1992}.
Following the scheme of the Bukhgeim-Klibanov method, many studies have been conducted.
\cite{IY01} studied the homogeneous Neumann boundary data case, i.e., $\partial_\nu u = 0$.
\cite{Imanuvilov2001} proved a Lipschitz stable recovery for the potential using interior data $u_1 - u_2$ in $(0,T) \times \omega$ where $\omega \subset \Omega$ satisfies
\(
\{ x \in \partial \Omega \,;\, (x - x_0) \cdot \nu(x) \geq 0\} \subset \partial \omega
\)
for a fixed $x_0 \notin \overline \Omega$.
In \cites{IY01, Imanuvilov2001}, the dimension is limited to $n \leq 3$.
\cites{Be04, Klibanov2006Lipschitz} applied the B-K method to the sound speed case and obtained Lipschitz stability results.
Klibanov in his recent monograph
\cite{Klibanov2021book}*{Section 3.6} obtained a Lipschitz stability for the sound speed.

The recent works \cites{Klibanov2022point, Klibanov2023plane} applied this method to inverse scattering problems for the sound speed in which the hyperbolic system is driven by a point source.
They obtained H\"older stability estimates.
The B-K method has also been applied to other types of PDEs.
For example, it was used for parabolic equations in \cite{IY1998parabolic}, and for Schr\"odinger equations in \cites{Baudouin2002Schrodinger, Huang2020Stability}.
In the case of partial data, Isakov and Yamamoto \cite{Isakov2003Stability} established Lipschitz stability for an inverse source problem using measurements on a portion $(0,T) \times \Gamma$ of the lateral boundary, where $\Gamma \subset \partial \Omega$.
Imanuvilov and Yamamoto \cite{IY2003Determination} established H\"older stability for recovering the sound-speed coefficient from a single interior observation on $(0,T)\times\omega$, where $\omega\subset\Omega$ satisfies suitable geometric conditions.
Bellassoued proved logarithmic stability for the potential in \cite{Bellassoued2004log}, using the lateral boundary data on $(0,T) \times \Gamma$ where $\Gamma$ is an arbitrarily small subset of $\partial \Omega$.
Then, this logarithmic stability was generalized to the sound speed case in \cite{Bellassoued2006acoustic} and to the Schr\"odinger equation in \cite{Bellassoued2009Schrodinger}.
There are other works in this field and we only listed a few.
For more references, we refer the readers to monographs \cites{Klibanov2012book, Bellassoued2017book, Klibanov2021book, Yamamoto2025book} and references therein.

Our work is closely related to the pioneering work of Imanuvilov and Yamamoto \cite{IY2003Determination}, where a H\"older-type stability estimate for recovering the unknown coefficient \(p\) from a single interior observation was established by means of Carleman estimates. Subsequent developments have considered the simultaneous identification of coefficients and sources from highly limited measurements. In particular, Kian and Uhlmann \cite{Kian2025} established unique simultaneous recovery of the sound speed and an initial source from a single passive boundary measurement in photoacoustic tomography.
Chen et al.~\cite{Chen2025} developed a unified framework for simultaneous identification of coupled unknowns in evolutionary equations from a single passive boundary observation, while Kian and Liu \cite{kian:hal-05158836} further established uniqueness and H\"older stability for the simultaneous recovery of the wave speed and an initial source in the wave equation.
 
The present work develops a simultaneous stability analysis for both the coefficient \(p\) and the initial displacement \(f\) within the Carleman estimate framework. Under the stated \textit{a priori} assumptions on the data and coefficients, we derive a conditional Lipschitz stability estimate that quantitatively controls the errors in both unknown quantities.
Thus, in the setting considered here, our result treats a coupled coefficient--initial-displacement identification problem using lateral boundary Cauchy data. It is complementary to the coefficient-only framework of \cite{IY2003Determination} and to recent studies of simultaneous recovery from single passive boundary measurements \cites{Kian2025, Chen2025, kian:hal-05158836}, which use different observation settings and admissible classes.

The remainder of this paper is organized as follows.
Section \ref{sec:prelim-SS26}  introduces the transformed equations and recalls the weighted energy and Carleman estimates used throughout the proof.
Section \ref{sec:lemmas-SS26} establishes several auxiliary estimates, including the weighted integral estimate and the energy estimate for the hyperbolic equation.
Section \ref{sec:proof-SS26} combines these estimates to prove the Lipschitz stability result stated in Theorem \ref{thm}.

\section{Preliminaries} \label{sec:prelim-SS26}

For convenience, we introduce the following notation:
\begin{equation} \label{eq:calL}
	\mathcal L_p := p \Delta - \nabla p \cdot \nabla - \Delta p, \quad
	F[p] := p^{-1} |\nabla p|^2.
\end{equation}
Here, $\mathcal L_p$ is a second-order differential operator which depends on the function $p$, and $\mathcal L_p$ is linear with respect to $p$ in the sense that $\mathcal L_{\alpha_1 p_1 + \alpha_2 p_2} = \alpha_1 \mathcal L_{p_1} + \alpha_2 \mathcal L_{p_2}$ for constants $\alpha_1$, $\alpha_2$.
Meanwhile, $F\colon p\mapsto F[p]$ is a nonlinear functional.
 
Using these notations, we directly derive the equations for \,$U(t,x)=p(x)u(t,x)$\,, 
\begin{equation*}
    \left\{\begin{aligned}
    \partial_t^2 U &= (\mathcal L_p + F[p]) U, && (t,x) \in Q_T,\\
    U&=p(x)g(t,x),&&(t,x)\in \Sigma_T, \\
    U(0,x)&=p(x)f(x), \ \ U_t(0,x)=0,&&x\in \Omega. \\
    \end{aligned}\right.
\end{equation*}
Then taking the time derivative gives 
\begin{equation} \label{U3}
    \left\{\begin{aligned}
        \partial_t^2 (\partial_t^3 U) &= (\mathcal L_p + F[p]) \partial_t^3 U, && (t,x) \in Q_T,\\
        \partial_t^3 U &= p(x) \partial_t^3 g(t,x),&&(t,x) \in \Sigma_T\\
    	\partial_t^3 U(0,x) &= 0,&&x\in \Omega,\\
    	\partial_t (\partial_t^3 U)(0,x) &= (\mathcal L_p + F[p])^2 (fp), &&x\in \Omega.
    \end{aligned}\right.
\end{equation}

To investigate the inverse problem, we consider two sets of coefficients $p_1,f_1$ and $p_2,f_2$ and the corresponding solutions $u_1,u_2$. 
Let $U_j = p_j u_j~(j=1,2)$, denote 
\begin{equation*}
	v = \partial_t^{3} (U_1 - U_2), \quad q = p_1 - p_2,\quad \hat{f}=f_1-f_2
\end{equation*}
\begin{equation*}
    \widetilde{g}=p_1(x)\partial_t^{3}(g_1-g_2), \quad  \widetilde{h}=p_1(x)\partial_t^{3}(h_1-h_2)+\frac{\partial p_1}{\partial \nu}\partial_t^{3}(g_1-g_2). 
\end{equation*}
Then we have the corresponding initial boundary value problem for $v$:
\begin{equation} \label{1.4}
    \left\{\begin{aligned}
        \partial_{t}^2 v=&(\mathcal L_{p_1} + F[p_1])v + \mathcal{A}(p_1,p_2), && (t,x) \in Q_T,\\
        v=& \tilde{g},&&(t,x)\in \Sigma_T,\\
        v(0,x)=&0,&&x\in \Omega,\\
        v_t(0,x)=&(\mathcal L_{p_1} + F[p_1])^2 (f_1p_1)-(\mathcal L_{p_2} + F[p_2])^2 (f_2p_2),&&x\in \Omega,
    \end{aligned}\right.
\end{equation}
where
\begin{equation*} \label{eq:calA}
	\mathcal A (p_1,p_2) := (\mathcal L_{p_1 - p_2} + F[p_1] - F[p_2]) (\partial_t^3 U_2).
\end{equation*}

Define $w(t,x) := \partial_t v(t,x)$. Then we have the corresponding initial boundary value problem for $w$:
\begin{equation} \label{w}
    \left\{\begin{aligned}
        \partial_t^2 w=&(\mathcal L_{p_1} + F[p_1])w + \partial_t\mathcal{A}(p_1,p_2),  && (t,x) \in Q_T,\\
        w=&\tilde{g}_t,\frac{\partial w}{\partial \nu}=\tilde{h}_t,&&(t,x)\in \Sigma_T,\\
        w(0,x)=&(\mathcal L_{p_1} + F[p_1])^2 (f_1p_1)-(\mathcal L_{p_2} + F[p_2])^2 (f_2p_2),&&x\in \Omega,\\
        w_t(0,x)=& \mathcal{A}(p_1,p_2), &&x\in \Omega.
    \end{aligned}\right.
\end{equation}
Since $\partial_t^3U_2(0,x)=0$ by \eqref{U3}, we have $\mathcal A(p_1,p_2)(0,x)=0$, and hence
\begin{equation}\label{eq:wt0}
    w_t(0,x)=0,\quad x\in \Omega.
\end{equation}
By \eqref{1.4}
\[ v(0,x)|_{\partial \Omega}=\frac{\partial v(0,x)}{\partial \nu} \Big|_{\partial \Omega}=0. \]
Hence
\[\tilde{g}(t,x)=\int_{0}^{t}\tilde{g}_\tau(\tau,x)  \,d\tau ,\quad  \tilde{h}(t,x)=\int_{0}^{t}\tilde{h}_\tau(\tau,x)  \,d\tau, \] 
which implies that 
\begin{equation}\label{g}
    \norm{\widetilde{g}}^2_{H^{1}(\Sigma_T)}\leq N\norm{\widetilde{g}_t}^2_{H^1(\Sigma_T)},\quad
    \norm{\widetilde{h}}^2_{L^{2}(\Sigma_T)}\leq N\norm{\widetilde{h}_t}^2_{L^2(\Sigma_T)}.
\end{equation}\par 
Consider the Carleman weight function 
\begin{equation}\label{CWF}
    \mathcal{C}(t,x)=e^{\lambda \psi(t,x)}, \quad \psi(t,x)=|x-x_0|^2-\beta t^2,
\end{equation}
where $\lambda$ is a positive constant and
\[
\beta := T^{-1} \sqrt{\frac{c_0^2}{4Md^{1/2}}}.
\]
The following lemma is a reformulation of Theorem 2.2.4 in \cite{Klibanov2012book}, which is the source of $n \in[1,13]$.
\begin{lem}\label{lemma-1}
Suppose $\Omega$ is a bounded domain in $\Rn$, $n\in [1,13]$ is an integer, and $r \in C^1(\overline{\Omega})$ satisfies
\[1\leq r(x)\leq a,\quad \frac{1}{2}+(x-x_0,\nabla r(x))>0, \quad x\in  \Omega.\]
Let $\delta = \delta(\Omega, x_0, T)$ be a sufficiently small positive constant. Suppose that the number $c^2 \in \bigl(c_0^2 - \delta, c_0^2 - \delta/2\bigr)$. Then there exist a sufficiently large positive constant $\lambda_0 = \lambda_0(\Omega, x_0, \beta) > 1$ and a positive number $C = C(\Omega, x_0, \beta)$ such that the following pointwise Carleman estimate is valid for all functions $u \in C^2(\overline{Q}_T)$:
\[
\bigl(r(x)u_{tt} - \Delta u\bigr)^2 \mathcal{C}^2 \geq C\lambda\bigl(|\nabla_{t,x} u|^2 + \lambda^2 u^2\bigr)\mathcal{C}^2 + \nabla \cdot \mathcal{U} + \partial_t V \quad \text{in } G_{c^2},
\]
where the $\R^{n+1}$-valued vector function $(\mathcal{U}, V)$ satisfies the following estimate:
\[
|(\mathcal{U}, V)| \leq C\lambda\bigl(|\nabla_{t,x} u|^2 + \lambda^2 u^2\bigr) \mathcal{C}^2 \quad \text{in } G_{c^2}.
\]

\end{lem}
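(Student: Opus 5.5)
The plan is to prove the estimate by the standard pointwise Carleman computation for the hyperbolic operator $P u := r(x)u_{tt}-\Delta u$, that is, a conjugation by the weight followed by integration by parts at the level of densities. Set $v=\mathcal C u = e^{\lambda\psi}u$, so that $u=e^{-\lambda\psi}v$. A direct computation gives
\[
\mathcal C\, P u = r v_{tt}-\Delta v - 2\lambda\bigl(r\psi_t v_t-\nabla\psi\cdot\nabla v\bigr) + \lambda^2\bigl(r\psi_t^2-|\nabla\psi|^2\bigr)v - \lambda\bigl(r\psi_{tt}-\Delta\psi\bigr)v,
\]
with $\nabla\psi=2(x-x_0)$, $\psi_t=-2\beta t$, $\psi_{tt}=-2\beta$ and $\Delta\psi=2n$. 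I would group these terms as $z_1+z_2+z_3$, where
\[
z_1=rv_{tt}-\Delta v+\lambda^2(r\psi_t^2-|\nabla\psi|^2)v,\qquad z_2=-2\lambda(r\psi_tv_t-\nabla\psi\cdot\nabla v),\qquad z_3=\lambda s\,v,
\]
and $s$ is a bounded function. The constant in $z_3$ is chosen to balance the first-order terms; this freedom is the usual trick.

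Next I would use $(\mathcal C Pu)^2\ge 2z_1z_2 + 2z_1z_3 + (\text{nonnegative or absorbable terms})$ and expand each product into a sum of divergences in $(t,x)$ plus nondivergence remainders. Products such as $v_{tt}\,r\psi_t v_t$ and $\Delta v\,(\nabla\psi\cdot\nabla v)$ are rewritten through the identities $2v_{tt}v_t=\partial_t(v_t^2)$ and $\Delta v\,\partial_k v\,\partial_k\psi=\nabla\cdot(\cdot)-\partial_j\partial_k\psi\,\partial_jv\,\partial_kv+\tfrac12\Delta\psi|\nabla v|^2+\nabla\cdot(\cdot)$, and similarly for the other terms. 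All divergence pieces are collected into $(\mathcal U,V)$. Since every such piece is quadratic in $(\lambda v,\nabla_{t,x}v)$ with at most one extra factor of $\lambda$, translating back to $u$ yields the bound $|(\mathcal U,V)|\le C\lambda(|\nabla_{t,x}u|^2+\lambda^2u^2)\mathcal C^2$.

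The remaining nondivergence terms split into two groups.
\begin{itemize}
\item The gradient terms have the form $\lambda\bigl[(4+\text{const})|\nabla v|^2 + (\text{terms with }\beta,\ r,\ (x-x_0)\cdot\nabla r)\,v_t^2\bigr]$. Here the hypothesis $\tfrac12+(x-x_0,\nabla r)>0$ and the smallness of $\beta$ make the coefficient of $v_t^2$ positive, after a suitable choice of $s$ that shares the weight between $v_t^2$ and $|\nabla v|^2$.
\item The zeroth-order term has the form $\lambda^3\,\Phi(t,x)v^2$, where $\Phi$ is built from $|x-x_0|^2$, $\beta^2t^2 r$, $n$, and $(x-x_0)\cdot\nabla r$.
\end{itemize}
Choosing the constant in $s$ relative to $\Delta\psi=2n$ is exactly where the dimension enters. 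The balance that keeps both the $|\nabla v|^2$ coefficient and $\Phi$ positive is possible only for $n\le 13$, as in \cite{Klibanov2012book}. The restriction to $G_{c^2}=\{\psi>c^2\}$, with $c^2<c_0^2$, confines $t$ so that $\beta t^2<|x-x_0|^2-c^2$. This yields a lower bound on $|x-x_0|^2-r\beta^2t^2$ and hence on $\Phi$, using $1\le r\le a$ and the definition of $\beta$.

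Finally, the lower-order leftovers of sizes $O(\lambda)v^2$, $O(\lambda^2)v^2$ and $O(1)|\nabla v|^2$ are absorbed by choosing $\lambda\ge\lambda_0(\Omega,x_0,\beta)$. Returning to $u$ via $\nabla_{t,x}v=\mathcal C(\nabla_{t,x}u+\lambda u\nabla_{t,x}\psi)$ then gives $C\lambda(|\nabla_{t,x}u|^2+\lambda^2u^2)\mathcal C^2$ on the right-hand side.

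The main obstacle is the algebraic positivity step: choosing $s$, and using $\tfrac12+(x-x_0,\nabla r)>0$ together with the smallness of $\beta$ and $\delta$, so that the quadratic form in $(v_t,\nabla v,\lambda v)$ is uniformly positive on $G_{c^2}$. This is where $n\le 13$ is forced. I would first try $s=2n-4+\varepsilon$-type choices, tracking the constants as in Klibanov's proof.
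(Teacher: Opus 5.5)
The paper gives no proof of this lemma; it quotes it as a reformulation of Theorem 2.2.4 in Klibanov's monograph. Your route is the standard argument behind that theorem: conjugate with $v=\mathcal C u$, integrate the products $z_1z_2$ and $z_1z_3$ by parts, then absorb. So the strategy is right, but the proposal stops at the only nontrivial step, and your account of that step has a genuine hole. Integrating $2(-\Delta v)(4\lambda\beta t\,r\,v_t)$ by parts yields the mixed term $8\lambda\beta t\,v_t\,\nabla r\cdot\nabla v$, in addition to divergences and $-4\lambda\beta r|\nabla v|^2$. This term is missing from your list of gradient terms. It has the same order $\lambda$ as the diagonal terms, so your final step (absorbing leftovers by taking $\lambda\ge\lambda_0$) cannot handle it. On $G_{c^2}$ you only know $\beta t<\sqrt{\beta(d^2-c^2)}$, so the term is absorbed only if $\sqrt{\beta}\,d\,\|\nabla r\|_{C(\overline\Omega)}$ is small compared with the diagonal coefficients. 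This is not an artifact of bookkeeping: it is the term $8\beta t\tau\,\xi\cdot\nabla r$ in $\{p,\{p,\psi\}\}$ for $p=|\xi|^2-r\tau^2$. Your ``smallness of $\beta$'' must therefore become an explicit hypothesis, namely that $\beta a$ and $\sqrt{\beta}\,d\,\|\nabla r\|_{C(\overline\Omega)}$ are small. This is not among the lemma's stated assumptions, and the constants will then depend on $a$ and $\|r\|_{C^1(\overline\Omega)}$ as well.

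Your account of where $n$ enters is also wrong. Choosing $s$ freely requires writing $\mathcal CPu=(z_1+z_2+\lambda sv)+\lambda(2n+2r\beta-s)v$ and using $(X+Y)^2\ge\tfrac12X^2-Y^2$. This gives $z_1z_2+z_1z_3$ minus an $O(\lambda^2)v^2$ term. Your inequality $(\mathcal CPu)^2\ge 2z_1z_2+2z_1z_3+\dots$ is justified only when $z_1+z_2+z_3=\mathcal CPu$ exactly, i.e. $s=2n+2r\beta$. In that case the $v_t^2$ coefficient is $\lambda\bigl(4(x-x_0)\cdot\nabla r-8\beta r^2\bigr)$, which is negative for constant $r$. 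With a constant $s$, the products give:
\begin{itemize}
\item $\lambda(2s+8-4n-4\beta r)$ on $|\nabla v|^2$;
\item $\lambda(4nr-2sr+4(x-x_0)\cdot\nabla r-4\beta r^2)$ on $v_t^2$;
\item $\lambda^3\bigl[(16(n+2)-8s+16r\beta)|x-x_0|^2-\beta^2t^2\bigl(16nr-8sr+16(x-x_0)\cdot\nabla r+48\beta r^2\bigr)\bigr]$ on $v^2$.
\end{itemize}
The choice $s=2n-1-2\beta a$ makes these at least $\lambda(6-8\beta a)$, $4\lambda\bigl(\tfrac12+(x-x_0,\nabla r)\bigr)$ and $\lambda^3|x-x_0|^2(40-O(\beta))$, using $\beta^2t^2<\beta|x-x_0|^2$ on $G_{c^2}$. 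Here $n$ cancels throughout. So the balance is not ``possible only for $n\le13$'': this computation works in every dimension once $\beta$ is small in the sense above. The restriction $n\le 13$ is inherited from the cited source, not forced here. To complete the proof, carry out this coefficient computation and absorb the mixed term; do not look for a dimension-dependent balance.
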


Lemma \ref{lemma-2} gives an estimate for the integral involving the Carleman weight function $\mathcal{C}$, which was previously proved in Lemma 3.1.1 of \cite{Klibanov2012book}*{Chapter 3}.
\begin{lem}\label{lemma-2}
    For all functions $s \in C(\overline{G_{c^2}})$ and for all $\lambda \geq 1$, the following estimate holds:
\[
\int_{G_{c^2}} \left[ \int_{0}^{t} s(\tau, x) \dif \tau \right]^2 \mathcal{C}^2(t, x) \dif x\dif t \leq \frac{1}{\lambda \beta} \int_{G_{c^2}} (s^2 \mathcal{C}^2)(t, x) \dif x\dif t.
\]
\end{lem}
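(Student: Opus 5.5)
The plan is to reduce the estimate to a one-dimensional inequality in $t$ for each fixed $x$. We use the structure of the weight $\mathcal C^2=e^{2\lambda|x-x_0|^2}e^{-2\lambda\beta t^2}$: its $x$-dependent factor is constant along each time-line and can be pulled out of the $t$-integration. The only geometric fact needed about $G_{c^2}$ is the following. As in \cite{Klibanov2012book}, $G_{c^2}$ is the set of points of $(0,T)\times\Omega$ where $\psi>c^2$ (if it also includes negative times symmetrically, the argument is run separately on each half). Since $\psi$ is strictly decreasing in $t$ for $t>0$, for every $x$ the section $\{t:(t,x)\in G_{c^2}\}$ is an interval $(0,t_x)$ with
\[
t_x=\min\Bigl\{T,\ \sqrt{(|x-x_0|^2-c^2)/\beta}\Bigr\}.
\]
It is empty when $|x-x_0|^2\le c^2$. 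Consequently $\int_0^t s(\tau,x)\dif\tau$ only involves points of $G_{c^2}$ whenever $(t,x)\in G_{c^2}$, so the right-hand side controls everything that appears.

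Fix $x$. By Cauchy--Schwarz, $\bigl[\int_0^t s\dif\tau\bigr]^2\le t\int_0^t s^2(\tau,x)\dif\tau$. Multiplying by $e^{-2\lambda\beta t^2}$, integrating over $t\in(0,t_x)$ and exchanging the order of integration (Fubini over the triangle $0<\tau<t<t_x$) gives
\[
\int_0^{t_x}\Bigl[\int_0^t s\dif\tau\Bigr]^2 e^{-2\lambda\beta t^2}\dif t\le\int_0^{t_x}s^2(\tau,x)\Bigl(\int_\tau^{t_x}t\,e^{-2\lambda\beta t^2}\dif t\Bigr)\dif\tau .
\]
The inner integral is computed explicitly: it is at most $\frac{1}{4\lambda\beta}e^{-2\lambda\beta\tau^2}$. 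Hence the left-hand side above is bounded by $\frac{1}{4\lambda\beta}\int_0^{t_x}s^2(\tau,x)e^{-2\lambda\beta\tau^2}\dif\tau$.

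Finally, we multiply by $e^{2\lambda|x-x_0|^2}$ and integrate over $x\in\Omega$, which recovers $\mathcal C^2$ on both sides. This yields the claim with constant $\frac{1}{4\lambda\beta}\le\frac{1}{\lambda\beta}$. Continuity of $s$ on $\overline{G_{c^2}}$ justifies all integrals and the use of Fubini. The hypothesis $\lambda\ge1$ is not needed beyond $\lambda>0$.

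The only point requiring care is the description of the $t$-sections of $G_{c^2}$ as intervals starting at $t=0$. This is exactly what makes the monotone decay of $e^{-2\lambda\beta t^2}$ usable after Fubini. It follows immediately from the form of $\psi$ in \eqref{CWF}.
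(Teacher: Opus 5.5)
Your argument is correct and is essentially the standard proof of this lemma; the paper itself gives no proof and only cites Lemma 3.1.1 of Klibanov's book. That proof uses Cauchy--Schwarz in $\tau$ and then integrates $t e^{-2\lambda\beta t^2}$ exactly on each $t$-section $(0,t_x)$ of $G_{c^2}$. Your Fubini exchange over the triangle $0<\tau<t<t_x$ is the same computation as the integration by parts based on $t e^{-2\lambda\beta t^2}=-\frac{1}{4\lambda\beta}\partial_t e^{-2\lambda\beta t^2}$, and it gives the sharper constant $\frac{1}{4\lambda\beta}$ (you are also right that only $\lambda>0$ is needed).
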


Lemma \ref{lemma-3} is an analogue of the classical energy estimate for hyperbolic operators of the second order, see \cite{Ladyzhenskaya1985}*{Chapter 4}.
\begin{lem}\label{lemma-3}
    Let $s\in C(\overline{\Omega})$, and let $s_0,s_1$ be positive constants such that $s_0\leq s(x) \leq s_1$ for all $x \in \Omega$.
    Suppose also that $u\in C^2(\overline{Q}_T)$ satisfies the hyperbolic inequality
\begin{equation*}
\bigl|s(x)u_{tt} - \Delta u\bigr| \leq B\bigl(|\nabla_{t,x} u| + |u| + |z(t, x)|\bigr), \quad \text{in } Q_T,
\end{equation*}
as well as initial and boundary conditions
\[
u(x, 0) \in H^1(\Omega), \quad u_t(x, 0) \in L^2(\Omega),
\]
\[
u\big|_{\Sigma_T} \in H^1(\Sigma_T), \quad \frac{\partial u}{\partial \nu} \Big|_{\Sigma_T} \in L^2(\Sigma_T),
\]
where $B$ is a positive constant. Then there exists a positive constant $B_1 = B_1(\Omega, T, B, s_0, s_1)$ depending on $\Omega, T, B, s_0$ and $s_1$ such that
\begin{equation*}
\|u\|_{H^1(Q_T)} \leq B_1\bigl(\|u(0,\cdot)\|_{H^1(\Omega)} + \|u_t(0,\cdot)\|_{L^2(\Omega)} + \|u\|_{H^1(\Sigma_T)} + \|\partial_\nu u\|_{L^2(\Sigma_T)} + \|z\|_{L^2(Q_T)}\bigr).
\end{equation*}
\end{lem}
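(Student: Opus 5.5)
The plan is the standard energy method: multiply the hyperbolic expression by $u_t$, integrate by parts, and close with Gronwall's inequality. We use that $s$ does not depend on $t$, so no time derivative of the coefficient appears, and that both lateral traces of $u$ are given, so the boundary flux is controlled directly by the data. Put $f := s(x)u_{tt}-\Delta u$, so that $|f|\le B(|\nabla_{t,x}u|+|u|+|z|)$ in $Q_T$, and define the energy
\[
E(t) := \int_\Omega \bigl( s(x)u_t^2 + |\nabla u|^2 + u^2 \bigr)(t,x)\dif x, \qquad t\in[0,T].
\]
Since $s_0\le s\le s_1$, $E(t)$ is equivalent, with constants depending only on $s_0,s_1$, to $\|u(t,\cdot)\|_{H^1(\Omega)}^2+\|u_t(t,\cdot)\|_{L^2(\Omega)}^2$.

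\textbf{Step 1 (energy identity).} For $t\in(0,T]$, multiply $f$ by $2u_t$ and integrate over $(0,t)\times\Omega$. Because $s$ is independent of time, $2su_{tt}u_t=\partial_t(su_t^2)$. By Green's formula,
\[
-2\int_\Omega \Delta u\, u_t \dif x = \frac{\dif}{\dif t}\int_\Omega |\nabla u|^2\dif x - 2\int_{\partial\Omega}\partial_\nu u\, u_t \dif S .
\]
Adding $\frac{\dif}{\dif t}\int_\Omega u^2 = 2\int_\Omega u u_t$ yields
\[
E(t) = E(0) + 2\int_0^t\!\!\int_{\partial\Omega} \partial_\nu u\, u_t \dif S\dif\tau + 2\int_0^t\!\!\int_\Omega (f+u)u_t\dif x\dif\tau .
\]
This is justified for $u\in C^2(\overline{Q}_T)$.

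\textbf{Step 2 (bounding the terms).} For the boundary term, $u_t|_{\Sigma_T}$ is a tangential derivative of the trace $u|_{\Sigma_T}\in H^1(\Sigma_T)$. Cauchy--Schwarz therefore bounds it by $\|\partial_\nu u\|_{L^2(\Sigma_T)}^2+\|u\|_{H^1(\Sigma_T)}^2$. This is why both Cauchy data appear on the right-hand side, and why we do not need a hidden-regularity or trace estimate. For the interior term, the differential inequality and $2ab\le a^2+b^2$ give
\[
2|(f+u)u_t| \le C(B)\bigl(|\nabla_{t,x}u|^2 + u^2 + z^2\bigr)\le C(B,s_0)\bigl(su_t^2+|\nabla u|^2+u^2\bigr)+C z^2 .
\]
Hence
\[
E(t)\le E(0) + C\bigl(\|u\|_{H^1(\Sigma_T)}^2+\|\partial_\nu u\|_{L^2(\Sigma_T)}^2+\|z\|_{L^2(Q_T)}^2\bigr) + C\int_0^t E(\tau)\dif\tau .
\]

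\textbf{Step 3 (Gronwall and conclusion).} Gronwall's inequality gives
\[
\sup_{0\le t\le T}E(t)\le e^{CT}\Bigl(E(0)+C\bigl(\|u\|_{H^1(\Sigma_T)}^2+\|\partial_\nu u\|_{L^2(\Sigma_T)}^2+\|z\|_{L^2(Q_T)}^2\bigr)\Bigr).
\]
Integrating in $t$ over $(0,T)$ gives $\|u\|_{H^1(Q_T)}^2\le T\sup_t E(t)$. We also have $E(0)\le \max(1,s_1)\bigl(\|u(0,\cdot)\|_{H^1(\Omega)}^2+\|u_t(0,\cdot)\|_{L^2(\Omega)}^2\bigr)$. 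Taking square roots then yields the claimed estimate with $B_1=B_1(\Omega,T,B,s_0,s_1)$.

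There is no genuine obstacle here. The only point needing care is the boundary flux term in Step 1. It is handled purely by Cauchy--Schwarz precisely because the lemma assumes both $u|_{\Sigma_T}\in H^1(\Sigma_T)$ and $\partial_\nu u|_{\Sigma_T}\in L^2(\Sigma_T)$.
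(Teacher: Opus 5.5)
Your argument is correct and is the classical energy estimate the paper invokes: multiply by $2u_t$, bound the lateral flux $2\int_0^t\int_{\partial\Omega}\partial_\nu u\,u_t\,dS\,d\tau$ by Cauchy--Schwarz using both Cauchy data, and close with Gronwall (the paper gives no proof of its own and only cites Ladyzhenskaya, Chapter 4). There is one cosmetic slip: since $E$ contains $s\,u_t^2$ rather than $u_t^2$, the last step should read $\|u\|_{H^1(Q_T)}^2\le T\max(1,s_0^{-1})\sup_{t}E(t)$, which only changes the constant $B_1$.
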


We formulate Lemma \ref{lemma-5}, which was proved in \cite{Klibanov2012book}. In this lemma, we state the following standard Carleman estimate for the Laplacian, which we will use to convert the higher-order terms of $f_1-f_2$ into lower-order ones.

\begin{lem}\label{lemma-5}
	Let $n\in [1,13]$ be an integer, and let $\mathcal{C}(x)=e^{\lambda|x-x_0|^2}$ be the spatial Carleman weight and $\lambda>0$ be a positive parameter. Then there exist a sufficiently large positive constant $\lambda_0 = \lambda_0(\Omega, x_0) > 1$ and a positive number $C = C(\Omega, x_0)$ such that the following pointwise Carleman estimate is valid for all functions $u \in C^2(\overline{\Omega})$:
	\begin{align}
		(\Delta u)^2 \mathcal{C}^2 \geq C\lambda(|\nabla u|^2 + \lambda^2 u^2) \mathcal{C}^2 + \nabla \cdot W,\quad x\in \Omega.\nonumber
	\end{align}
	where the function $W$ satisfies the inequality 
	\[|W|\le C\lambda (|\nabla u|^2+\lambda^2u^2)\mathcal{C}^2\]
\end{lem}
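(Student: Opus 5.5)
We plan to prove Lemma \ref{lemma-5} by the standard conjugation argument: we write the weighted Laplacian in terms of $v := u\,\mathcal{C}$, split the conjugated operator into symmetric and antisymmetric parts, and turn the cross terms into divergences plus positive remainders. Set $\varphi(x)=|x-x_0|^2$, so that $\nabla\varphi = 2(x-x_0)$, $|\nabla\varphi|^2 = 4|x-x_0|^2$ and $\Delta\varphi = 2n$. Substituting $u = v e^{-\lambda\varphi}$ gives
\begin{equation*}
\mathcal{C}\,\Delta u = \Delta v + 4\lambda^2|x-x_0|^2 v - 4\lambda (x-x_0)\cdot\nabla v - 2n\lambda v =: z_1 + z_2 + z_3,
\end{equation*}
where $z_1 = \Delta v + 4\lambda^2|x-x_0|^2 v$, $z_2 = -4\lambda(x-x_0)\cdot\nabla v$ and $z_3 = -2n\lambda v$. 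We then bound $(\Delta u)^2\mathcal{C}^2 = (z_1+z_2+z_3)^2$ from below by an expression of the form $2z_1 z_2 + (\text{terms involving } z_3 \text{ and } v)$, discarding a nonnegative square.

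The key computations are as follows. First, by the identity
\begin{equation*}
\Delta v\,(x-x_0)\cdot\nabla v = \nabla\cdot\bigl(\nabla v\,(x-x_0)\cdot\nabla v - \tfrac12(x-x_0)|\nabla v|^2\bigr) + \bigl(\tfrac n2 - 1\bigr)|\nabla v|^2,
\end{equation*}
we get
\begin{equation*}
2\Delta v\, z_2 = \nabla\cdot(\ldots) + 8\lambda\bigl(1-\tfrac n2\bigr)|\nabla v|^2 .
\end{equation*}
Second, since $4\lambda^2|x-x_0|^2 v\, z_2 = -8\lambda^3|x-x_0|^2 (x-x_0)\cdot\nabla(v^2)$, an integration by parts in divergence form yields a term $8(n+2)\lambda^3|x-x_0|^2 v^2 \ge 8(n+2)\lambda^3 c_0^2 v^2$. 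This is positive, and it is here that $c_0\ge 1$ is used. Third, the gradient term has the wrong sign when $n\ge 3$. To repair this, we use the terms involving $v\,\Delta v$, namely $-2\mu\lambda\, v\,\Delta v = \nabla\cdot(-2\mu\lambda v\nabla v) + 2\mu\lambda|\nabla v|^2$. These come from $z_3$ and from an auxiliary lower bound $(\Delta u)^2\mathcal{C}^2 + \mu^2\lambda^2 v^2 \ge -2\mu\lambda\, v\, \mathcal{C}\Delta u$, with the free parameter $\mu$ to be chosen. Choosing $\mu$ large enough to make the total coefficient of $\lambda|\nabla v|^2$ positive costs a term of order $-8\mu\lambda^3 d^2 v^2$ together with lower-order terms of size $\mu^2\lambda^2 v^2$.

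The main obstacle is to show that there is a choice of $\mu$ and of the convex weights between the two lower bounds for which \emph{both} the $\lambda|\nabla v|^2$ and the $\lambda^3 v^2$ coefficients are positive. This is exactly where the geometric ratio of $d$ to $c_0$ and the dimension restriction $n\le 13$ enter, as in Theorem 2.2.4 of \cite{Klibanov2012book}. I would first try taking a convex combination $\theta(\ldots) + (1-\theta)(\ldots)$ of the two inequalities, and then optimizing $\theta$ and $\mu$ as explicit functions of $n$. Following Klibanov, we may also replace $\varphi$ by $\varphi^{\nu}$ or shrink $\Omega$ to the level-set region if needed. Once the bound
\begin{equation*}
(\Delta u)^2\mathcal{C}^2 \ge C\lambda\bigl(|\nabla v|^2 + \lambda^2 v^2\bigr) + \nabla\cdot W_0
\end{equation*}
is established for $\lambda\ge\lambda_0$, with the lower-order $\lambda^2 v^2$ terms absorbed into the $\lambda^3$ term, we return to $u$. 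Since $\nabla v = \mathcal{C}(\nabla u + 2\lambda(x-x_0)u)$, we have $|\nabla u|^2\mathcal{C}^2 \le 2|\nabla v|^2 + 8\lambda^2 d^2 v^2$, which converts the estimate into the stated form after adjusting $C$. Finally, every divergence field produced above is a quadratic form in $(\nabla v, \lambda v)$ with coefficients of size at most $C\lambda$, which gives $|W|\le C\lambda(|\nabla u|^2+\lambda^2u^2)\mathcal{C}^2$.
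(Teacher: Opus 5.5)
Your setup is sound. The conjugation $v=u\mathcal C$, the splitting $\mathcal C\Delta u=z_1+z_2+z_3$, the identity for $\Delta v\,(x-x_0)\cdot\nabla v$, and the closing conversion from $v$ back to $u$ (including the bound on $W$) are all correct. The gap is the central step, which you leave open. You only propose to \emph{try} to find $\mu$ and convex weights making both coefficients positive, and the fallbacks you mention would not prove the lemma as stated. A condition on $d/c_0$ is not a hypothesis of the lemma. Replacing $|x-x_0|^2$ by a power of it changes the fixed weight $\mathcal C=e^{\lambda|x-x_0|^2}$, and shrinking $\Omega$ changes the region $x\in\Omega$. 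The obstacle you see comes from three slips:
\begin{itemize}
\item You separate $2z_1z_2$ from the $z_3$-terms.
\item You drop a factor $2$: in $2z_1z_2$ the $v^2$-term is $16(n+2)\lambda^3|x-x_0|^2v^2$. You computed it for $z_1z_2$, without the factor $2$ you kept in the gradient term.
\item You bound the gain from below by $c_0^2$ and the cost from above by $d^2$, instead of comparing them at the same point $x$.
\end{itemize}
(The paper gives no proof of its own here; it only cites Klibanov's book, so your argument has to stand alone.)

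The missing observation is that $z_3$ already supplies exactly the correction you want, with $\mu=2n$. Take the full antisymmetric part $B:=z_2+z_3=-4\lambda(x-x_0)\cdot\nabla v-2n\lambda v$ and use $(z_1+B)^2\ge 2z_1B$. The $z_3$-contributions are
\[
-4n\lambda v\Delta v=4n\lambda|\nabla v|^2+\nabla\cdot(-4n\lambda v\nabla v)
\quad\text{and}\quad
-16n\lambda^3|x-x_0|^2v^2 .
\]
Adding these to your (corrected) computation of $2z_1z_2$ gives the exact identity
\[
2z_1B=8\lambda|\nabla v|^2+32\lambda^3|x-x_0|^2v^2+\nabla\cdot W_0 .
\]
This holds for every $\lambda>0$ and every $n$, with $|W_0|\le C\lambda(|\nabla v|^2+\lambda^2v^2)$ for $\lambda\ge1$. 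There is no lower-order remainder, because $D^2|x-x_0|^2=2I$ is constant. Since $|x-x_0|\ge c_0>0$ on $\Omega$, this is the required estimate for $v$.

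So no auxiliary inequality, convex combination, ratio $d/c_0$ or dimension restriction is needed. The paper itself traces $n\le 13$ to the hyperbolic estimate of Lemma~\ref{lemma-1}, not to this one. With this identity in place, your final step back to $u$ completes the proof.
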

For a time-independent $u$, integrating in time over $(0,\xi)$ and using $e^{-2\lambda\beta t^2}\le1$ yields the same estimate with $e^{2\lambda|x-x_0|^2}$ replaced by the space-time weight $\mathcal C^2(t,x)$ of \eqref{CWF} on $Q_\xi$.

Next, we present a Carleman estimate for a third order operator, which was proved in \cite{Klibanov2006Lipschitz}.

\begin{lem} \label{lemma-4}
	Let the function $f \in C^3(\overline{\Omega})$ satisfy $\min_{x \in \overline{\Omega}} (x - x_0, \nabla f(x))\triangleq \mu (x_0) > 0$. Then there exist a positive constant $C = C(\Omega, x_0, \|f\|_{C^3(\overline{\Omega})}, \mu(x_0))$, a Carleman weight function $\mathcal{C}=e^{\lambda |x-x_0|^2}$ and a sufficiently large positive constant $\lambda_0 = \lambda_0(\Omega, x_0, \mu(x_0)) > 1$ such that for all functions $u \in C^3(\overline{\Omega})$ the following pointwise Carleman estimate is valid:
	\begin{equation*}
		[\Delta(\nabla u \cdot \nabla f)]^2 \mathcal{C}^2 \geq C\lambda \sum_{i,j=1}^n u_{ij}^2 \mathcal{C}^2 + C\lambda^3(|\nabla u|^2 + \lambda^2 u^2) \mathcal{C}^2 + \nabla \cdot W,
	\end{equation*}
	where the vector function $W$ satisfies
	\begin{equation*}
    	|W|\leq C\lambda \sum_{i,j=1}^{n} u_{ij}^2 \mathcal{C}^2+ C\lambda^3 \left( |\nabla u|^2 + \lambda^2 u^2 \right) \mathcal{C}^2.
	\end{equation*}
	In particular,
	\begin{equation*}
    	\int_{\Omega}\left[\Delta(\nabla u\cdot\nabla f)\right]^2\mathcal{C}^2\geq C\lambda \sum_{i,j=1}^{n}\int_{\Omega} u_{ij}^2 \mathcal{C}^2 \, dx+C\lambda^3\int_{\Omega} \left(|\nabla u|^2+\lambda^2u^2\right)\mathcal{C}^2 \, dx.
	\end{equation*}
	for all real valued functions $u\in H^3(\Omega)$ satisfying
	\[
    u\big|_{\partial\Omega} = \frac{\partial^k u}{\partial\nu^k} \Big |_{\partial\Omega}=0, \qquad k=1,2.
	\]
\end{lem}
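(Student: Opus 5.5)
The plan is to reduce the third-order operator to two ingredients: the Laplacian Carleman estimate of Lemma \ref{lemma-5}, and a first-order transport Carleman estimate along $\nabla f$ that exploits $(x-x_0,\nabla f)\ge \mu(x_0)>0$. Throughout I use $\mathcal C^2=e^{2\lambda|x-x_0|^2}$, for which $\nabla \mathcal C^2=4\lambda(x-x_0)\mathcal C^2$.

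\emph{Step 1: transport estimate.} For a scalar $w\in C^1$ I use the identity
\[
\nabla\cdot(w^2\nabla f\,\mathcal C^2)=2w(\nabla w\cdot\nabla f)\mathcal C^2+w^2\Delta f\,\mathcal C^2+4\lambda (x-x_0,\nabla f)w^2\mathcal C^2 .
\]
Estimate the cross term by $2|w||\nabla w\cdot\nabla f|\le 2\lambda\mu w^2+(\lambda\mu)^{-1}(\nabla w\cdot\nabla f)^2$. For $\lambda$ large, depending on $\|f\|_{C^2}$ and $\mu$, the term $|\Delta f|w^2$ is then absorbed, and I obtain the pointwise bound
\[
(\nabla w\cdot\nabla f)^2\mathcal C^2\ge c\lambda^2 w^2\mathcal C^2+\nabla\cdot W_1,\qquad |W_1|\le C\lambda w^2\mathcal C^2 .
\]

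\emph{Step 2: pass through $\Delta u$.} Put $z=\Delta u$. Then
\[
\Delta(\nabla u\cdot\nabla f)=\nabla z\cdot\nabla f+2\sum_{i,j}u_{ij}f_{ij}+\nabla u\cdot\nabla\Delta f .
\]
Applying $(X+Y)^2\ge \tfrac12X^2-Y^2$ and then Step 1 with $w=z$ gives
\[
[\Delta(\nabla u\cdot\nabla f)]^2\mathcal C^2\ge c\lambda^2(\Delta u)^2\mathcal C^2-C\Bigl(\sum_{i,j}u_{ij}^2+|\nabla u|^2\Bigr)\mathcal C^2+\nabla\cdot W_1 ,
\]
where $C$ depends on $\|f\|_{C^3}$; this is where $f\in C^3$ is used. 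I then split $c\lambda^2(\Delta u)^2$ into two halves.
\begin{itemize}
\item For the first half I write it as $\lambda\cdot\lambda(\Delta u)^2$ and apply Lemma \ref{lemma-5} to $u$. This yields $c\lambda^2(|\nabla u|^2+\lambda^2u^2)\mathcal C^2$ plus a divergence. The weights here fall one power of $\lambda$ short of the claim, so I would redo the count: in Lemma \ref{lemma-5} the gain is $\lambda(|\nabla u|^2+\lambda^2u^2)$ per unit of $(\Delta u)^2$. If needed I would use $\lambda^3$ instead of $\lambda^2$ by keeping the $\lambda^{-1}$ normalization from Step 1 consistent, i.e.\ by choosing the Cauchy--Schwarz split with $\lambda\mu/2$ to get a $\lambda^2$ coefficient, possibly after multiplying by a further $\lambda$ and rebalancing. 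This bookkeeping must be checked.
\item For the second half I use the pointwise Rellich-type identity
\[
(\Delta u)^2\mathcal C^2=\sum_{i,j}u_{ij}^2\mathcal C^2+\nabla\cdot W_2+O\bigl(\lambda|\nabla u||D^2u|+\lambda^2|\nabla u|^2\bigr)\mathcal C^2 ,
\]
obtained by integrating $u_{ii}u_{jj}\mathcal C^2$ by parts twice. By Young's inequality the $O(\lambda|\nabla u||D^2u|)$ term costs $\varepsilon\sum u_{ij}^2+C_\varepsilon\lambda^2|\nabla u|^2$. Hence a multiple of $\lambda(\Delta u)^2$ controls $\lambda\sum u_{ij}^2$, minus $C\lambda^3|\nabla u|^2$, which is absorbed by the first half.
\end{itemize}
The negative terms $C(\sum u_{ij}^2+|\nabla u|^2)\mathcal C^2$ are then absorbed for $\lambda\ge\lambda_0(\Omega,x_0,\mu)$. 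The flux $W=W_1+W_2+W_{\mathrm{Lemma\,\ref{lemma-5}}}$ satisfies the stated bound, since each piece is quadratic in $u$, $\nabla u$, $D^2u$ with the indicated powers of $\lambda$.

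\emph{Step 3: integral form.} Integrate over $\Omega$ and apply the divergence theorem. The conditions $u=\partial_\nu u=\partial_\nu^2u=0$ on $\partial\Omega$ imply that $u$, $\nabla u$ and all second derivatives $D^2u$ vanish there, because tangential derivatives of zero traces vanish. So $W=0$ on $\partial\Omega$ and the boundary integral drops out. A density argument extends the estimate from $C^3$ to $H^3$.

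I expect the main obstacle to be Step 2: getting the exact powers $\lambda\sum u_{ij}^2$ and $\lambda^3(|\nabla u|^2+\lambda^2u^2)$ simultaneously. The transport step yields only $\lambda^2(\Delta u)^2$, and this has to be apportioned between the Rellich identity and Lemma \ref{lemma-5} without losing a power of $\lambda$. If the count fails, I would instead apply Lemma \ref{lemma-5} to $v=\nabla u\cdot\nabla f$ to gain $\lambda(|\nabla v|^2+\lambda^2v^2)$, and then apply Step 1 to $v$ and to each $u_k$ to recover $\lambda^3|\nabla u|^2+\lambda^5u^2$. The $\lambda\sum u_{ij}^2$ term would still come from the $\Delta u$ route above.
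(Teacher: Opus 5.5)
The paper gives no proof of this lemma; it imports it from Klibanov's 2006 work, so the only question is whether your argument closes. It does, and you do not need the fallback. The lost power of $\lambda$ you worry about is a miscount. Multiplying the pointwise estimate of Lemma~\ref{lemma-5} by $\lambda^2$ gives
\[
\lambda^2(\Delta u)^2\mathcal C^2\ge C_5\lambda^3\bigl(|\nabla u|^2+\lambda^2u^2\bigr)\mathcal C^2+\nabla\cdot(\lambda^2W_5),\qquad |\lambda^2W_5|\le C\lambda^3\bigl(|\nabla u|^2+\lambda^2u^2\bigr)\mathcal C^2 .
\]
This is exactly the claimed order (you wrote $\lambda^2$), and the flux has exactly the claimed bound.

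What does need care is the Rellich half, and the phrase ``absorbed by the first half'' hides the issue. Per unit of $\lambda(\Delta u)^2$, the Rellich identity plus Young gives $\tfrac{\lambda}{2}\sum_{i,j}u_{ij}^2-C'\lambda^3|\nabla u|^2$, modulo a divergence. The term $C'\lambda^3|\nabla u|^2$ carries the \emph{same} power of $\lambda$ as the gain $C_5\lambda^3|\nabla u|^2$ from Lemma~\ref{lemma-5}, so taking $\lambda$ large cannot absorb it. Nor can you feed the whole half $\tfrac c2\lambda^2(\Delta u)^2$ into Rellich, because the remainder would then be of order $\lambda^4|\nabla u|^2$.

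The fix is to use a small fixed fraction. Bound $\tfrac c2\lambda^2(\Delta u)^2\ge\tfrac c2\delta\lambda(\Delta u)^2$ pointwise, dropping a nonnegative term, with $\delta\le C_5/(2C')$. This yields
\[
c\lambda^2(\Delta u)^2\mathcal C^2\ge \tfrac{c\delta}{4}\lambda\sum_{i,j}u_{ij}^2\mathcal C^2+\tfrac{cC_5}{4}\lambda^3|\nabla u|^2\mathcal C^2+\tfrac{cC_5}{2}\lambda^5u^2\mathcal C^2+\nabla\cdot W' .
\]
Here $|W'|$ stays within the stated bound, since the Rellich flux is $O(\lambda|\nabla u||D^2u|)\mathcal C^2$ and the Step 1 flux is $O(\lambda(\Delta u)^2)\mathcal C^2$. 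Only after this split should you take $\lambda$ large, to absorb the term $C(\sum u_{ij}^2+|\nabla u|^2)\mathcal C^2$ from Step 2.

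That last absorption makes $\lambda_0$ depend on $\|f\|_{C^3}$, and Step 1 already needs $\lambda\mu\gtrsim\|\Delta f\|_\infty$. So $\lambda_0$ is not a function of $(\Omega,x_0,\mu)$ alone, as you claim at the end (and as the statement says). This is harmless for how the lemma is used in the paper. Steps 1 and 3 are fine as written.
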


We also need another pointwise Carleman estimate that is important for our proof, which was established in \cite{ma2025init}*{Proposition 2.3}.

\begin{prop}[Pointwise Carleman estimate] \label{prop-2511}
	Let $a^{jk}(x)$ be a real symmetric matrix-valued, $C^2$-smooth function.
	Let $u$ and $\tilde \ell$ be real-valued and $C^2$-smooth, and $\ell$ be real-valued and $C^3$-smooth.
	Then for $v = e^\ell u$ there holds,
	\begin{align*}
		e^{2\ell} |a^{jk} \partial_{jk} u|^2
		& \geq 2 \big[ 2 (a^{jk'} a^{j'k} \ell_{j'})_{k'} - (a^{jk} a^{j'k'})_{k'} \ell_{j'} - \tilde \ell a^{jk} \big] v_j v_k + 2 (a^{jk} a^{j'k'} \ell_{j'k'} - a^{jk} \tilde \ell)_j v v_k \nonumber \\
		& \quad + \big\{ 2 a^{jk} a^{j'k'} (\ell_{j'} \ell_{k'})_j \ell_k + 2(a^{jk} a^{j'k'})_j \ell_{j'} \ell_{k'} \ell_k + 2a^{jk} \ell_j \ell_k \tilde \ell \big\} v^2 \nonumber \\
		& \quad - \big\{ 2 a^{jk} a^{j'k'} (\ell_k \ell_{j'k'})_j + 2(a^{jk} a^{j'k'})_j \ell_{j'k'} \ell_k + 2\tilde \ell^2 - 2a^{jk} \ell_{jk} \tilde \ell \big\} v^2 + \partial_j F^j,
	\end{align*}
	where
	\begin{equation*}
		F^j := a^{jk} a^{j'k'} (2 \ell_k v_{j'} v_{k'} - 4 \ell_{j'} v_{k} v_{k'} - 2\ell_{j'k'} v_k v - 2 \ell_{j'} \ell_{k'} \ell_k v^2 + 2\ell_k \ell_{j'k'} v^2) + 2\tilde \ell a^{jk} v_k v.
	\end{equation*}
\end{prop}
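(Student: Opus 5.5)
The plan is to prove Proposition \ref{prop-2511} by the standard conjugation argument: substitute $u=e^{-\ell}v$, split the conjugated operator into two parts, and keep only the cross term of the square. We use only the Leibniz rule and the symmetry $a^{jk}=a^{kj}$; nothing from earlier in the paper is needed. First we compute
\[
e^{\ell}a^{jk}\partial_{jk}u = a^{jk}v_{jk} - 2a^{jk}\ell_j v_k + a^{jk}(\ell_j\ell_k-\ell_{jk})v .
\]
We split this as $J_1+J_2$, with
\[
J_1 := a^{jk}v_{jk} + a^{jk}\ell_j\ell_k v - \tilde\ell v, \qquad J_2 := -2a^{jk}\ell_j v_k - a^{jk}\ell_{jk}v + \tilde\ell v .
\]
The auxiliary function $\tilde\ell$ is inserted with opposite signs in $J_1$ and $J_2$, so $J_1+J_2$ is unchanged. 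Since $(J_1+J_2)^2\ge 2J_1J_2$, it suffices to show that $2J_1J_2$ equals the right-hand side of the proposition exactly. That is, it should equal the stated quadratic form in $(\nabla v, v)$ plus $\partial_jF^j$. The presence of $-2\tilde\ell^2v^2$ and $+2a^{jk}\ell_{jk}\tilde\ell v^2$ in the claimed formula supports this choice of splitting: they are precisely the products $(-\tilde\ell v)(\tilde\ell v)$ and $(-\tilde\ell v)(-a^{jk}\ell_{jk}v)$, each doubled.

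Next we expand $2J_1J_2$ into its nine products and rewrite each in "divergence plus lower-order" form.

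\emph{Product $-4a^{jk}a^{j'k'}v_{jk}\ell_{j'}v_{k'}$.} This is the only one involving second derivatives of $v$. We integrate by parts twice. First move $\partial_j$ off $v_{jk}$, giving $-\partial_j(4a^{jk}a^{j'k'}\ell_{j'}v_kv_{k'})$ plus terms containing $v_kv_{jk'}$. Then use the symmetry of $a$ to write $a^{jk}a^{j'k'}v_kv_{jk'}\ell_{j'}$ as half of a derivative of the quadratic form $a^{jk}a^{j'k'}\ell_{j'}v_kv_{k'}$ with indices relabelled. This generates the flux terms $2\ell_k v_{j'}v_{k'}-4\ell_{j'}v_kv_{k'}$ in $F^j$ and the gradient coefficient $2\big[2(a^{jk'}a^{j'k}\ell_{j'})_{k'}-(a^{jk}a^{j'k'})_{k'}\ell_{j'}\big]$.

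\emph{Products $v_{jk}\cdot(-a^{j'k'}\ell_{j'k'}v)$ and $v_{jk}\cdot\tilde\ell v$.} Here $2v\,v_{jk}$ is written as $\partial_j(2v v_k)-2v_jv_k$. This gives the terms $-2\ell_{j'k'}v_kv$ and $2\tilde\ell a^{jk}v_kv$ in $F^j$, the $-2\tilde\ell a^{jk}v_jv_k$ gradient term, and the mixed term $2(a^{jk}a^{j'k'}\ell_{j'k'}-a^{jk}\tilde\ell)_j v v_k$, which is kept as it stands.

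\emph{Products of $v$ with $v_k$.} These come from $(a^{jk}\ell_j\ell_k v-\tilde\ell v)\cdot(-2a^{j'k'}\ell_{j'}v_{k'})$. We write $2vv_k=\partial_k(v^2)$ and integrate by parts once. This produces the zeroth-order terms $2a^{jk}a^{j'k'}(\ell_{j'}\ell_{k'})_j\ell_k+2(a^{jk}a^{j'k'})_j\ell_{j'}\ell_{k'}\ell_k+2a^{jk}\ell_j\ell_k\tilde\ell$, plus the companion terms with $\ell_{j'k'}$, and the $v^2$ parts of $F^j$.

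\emph{Remaining products.} The pure $v^2$ products are already in final form.

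Finally we collect all terms and match them coefficient by coefficient with the statement. We do this separately for the gradient quadratic form, the mixed $vv_k$ term, the $v^2$ terms and $F^j$, using only $a^{jk}=a^{kj}$ and the freedom to relabel dummy indices. The regularity hypotheses ($a\in C^2$, $\ell\in C^3$, $\tilde\ell\in C^2$) are exactly what the expressions require. For instance, $(a^{jk}a^{j'k'}\ell_{j'k'})_j$ needs $\ell\in C^3$.

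The main obstacle is the bookkeeping in the second-order product above. Depending on which index we integrate by parts in first, the gradient coefficient appears as different, but equivalent, combinations such as $(a^{jk'}a^{j'k}\ell_{j'})_{k'}$ versus $(a^{jk}a^{j'k'}\ell_{j'})_{k'}$. Reaching the precise stated form requires choosing the symmetrization so that the leftover divergence is exactly $2\ell_kv_{j'}v_{k'}-4\ell_{j'}v_kv_{k'}$. I would first do the computation with $a^{jk}=\delta^{jk}$ to fix signs and factors, and then restore the variable coefficients, tracking the derivatives $(a^{jk}a^{j'k'})_{k'}$ as they fall on the coefficient products.
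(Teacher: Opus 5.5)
Your reduction breaks at the step ``it suffices to show that $2J_1J_2$ equals the right-hand side exactly''. That identity is false. Split your $J_2$ further as $J_2=I_2+I_3$ with $I_2:=-2a^{jk}\ell_jv_k$ and $I_3:=(\tilde\ell-a^{jk}\ell_{jk})v$. A direct expansion then shows that the right-hand side of the proposition, including $\partial_jF^j$, is
\[
2J_1I_2+2J_1I_3+2I_2I_3\;=\;2J_1J_2+4\bigl(a^{jk}\ell_{jk}-\tilde\ell\bigr)\,a^{j'k'}\ell_{j'}v_{k'}\,v .
\]
The mismatch shows up directly in the $v^2$-terms, in two ways.

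\emph{Terms the statement has but $2J_1J_2$ cannot produce.} The companion terms $-2(a^{jk}a^{j'k'}\ell_k\ell_{j'k'})_j v^2$ and the flux piece $2a^{jk}a^{j'k'}\ell_k\ell_{j'k'}v^2$ are the divergence form of $4(a^{jk}\ell_{jk})(a^{j'k'}\ell_{j'}v_{k'})v$. This is a product of two terms that both sit in your $J_2$, so it never appears in $2J_1J_2$.

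\emph{Terms $2J_1J_2$ produces but the statement lacks.} $2J_1J_2$ contains $4\tilde\ell\,(a^{jk}\ell_jv_k)v$, coming from $-\tilde\ell v\in J_1$ times $-2a^{jk}\ell_jv_k\in J_2$. Its divergence form is $\partial_j(2\tilde\ell a^{jk}\ell_kv^2)-2(\tilde\ell a^{jk}\ell_k)_jv^2$. The statement has no $\nabla\tilde\ell$ in the $v^2$-coefficient and no $\tilde\ell\,\ell_kv^2$ in $F^j$, so this has no counterpart.

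Already for $n=1$, $a\equiv 1$, $\tilde\ell\equiv0$, the discrepancy is $4\ell'\ell''vv'$, which has no sign. So your final coefficient-by-coefficient matching cannot succeed. Relatedly, $2a^{jk}\ell_j\ell_k\tilde\ell\,v^2$ does not come from the $v\cdot v_k$ products as you claim. It is the pure product $2(a^{jk}\ell_j\ell_kv)(\tilde\ell v)$.

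The repair is small: keep three pieces and use
\[
(J_1+I_2+I_3)^2=J_1^2+I_2^2+I_3^2+2J_1I_2+2J_1I_3+2I_2I_3\ \ge\ 2J_1I_2+2J_1I_3+2I_2I_3 .
\]
Equivalently, do not discard all of $J_2^2=I_2^2+2I_2I_3+I_3^2$; keep its cross term $2I_2I_3$. The $\pm4\tilde\ell\,(a^{jk}\ell_jv_k)v$ terms then cancel. What survives is $-4\bigl(a^{jk}\ell_j\ell_k-a^{jk}\ell_{jk}\bigr)a^{j'k'}\ell_{j'}v_{k'}v$. Rewriting it with $2vv_{k'}=\partial_{k'}(v^2)$ gives exactly both $v^2$-braces and the $v^2$-part of $F^j$.

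Your treatment of the second-order product and of the $v_{jk}v$ products is fine as described, with one bookkeeping point. The double integration by parts leaves, besides $4(a^{jk'}a^{j'k}\ell_{j'})_{k'}v_jv_k$, the term $-2(a^{jk}a^{j'k'}\ell_{j'})_{k'}v_jv_k$. The latter's extra piece $-2a^{jk}a^{j'k'}\ell_{j'k'}v_jv_k$ cancels against the $+2a^{jk}a^{j'k'}\ell_{j'k'}v_jv_k$ coming from the product $2a^{jk}v_{jk}\cdot(-a^{j'k'}\ell_{j'k'}v)$.

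The paper itself imports this proposition from earlier work without proof, so there is no in-paper argument to compare against.
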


Lemma \ref{2511-2.5} was proved in \cite{ma2025init}*{Lemma 2.5}.

\begin{lem} \label{2511-2.5}
	Fix $x_0 \in \mathbb{R}^n \backslash \overline{\Omega}$ such that $\operatorname{dist}(x_0, \Omega) \geq 1$.
	There exist two positive constants $\lambda_0$ and $C = C(\lambda_0)$ such that for all $\lambda \geq \lambda_0$ and all $u \in H^2(\Omega)$, there holds
	\begin{equation*}
		C \int_{\Omega} e^{2\lambda |x - x_0|^2} ( \lambda |\nabla u|^2 + \lambda^3 u^2 )
		\leq
		\int_{\Omega} e^{2\lambda |x - x_0|^2} |\Delta u|^2 + \int_{\partial \Omega} e^{2\lambda |x - x_0|^2} ( \lambda |\nabla u|^2 + \lambda^3 u^2 ).
	\end{equation*}
\end{lem}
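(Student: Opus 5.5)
The last statement is Lemma \ref{2511-2.5}, a Carleman estimate for the Laplacian with boundary terms. I would derive it from Proposition \ref{prop-2511} by specialising the coefficients and then integrating over $\Omega$.

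\textbf{Choice of weights.} Take $a^{jk}=\delta_{jk}$ and $\ell=\lambda\varphi$ with $\varphi=|x-x_0|^2$. Then
\[
\ell_j=2\lambda(x-x_0)_j,\qquad \ell_{jk}=2\lambda\delta_{jk},
\]
and all third derivatives of $\ell$ vanish. Take $\tilde\ell=\gamma\lambda$ with a constant $\gamma$ to be fixed, and set $v=e^{\ell}u$.

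\textbf{Coefficients in Proposition \ref{prop-2511}.}

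First, the gradient coefficient is
\[
2\bigl[2\ell_{jk}-\tilde\ell\,\delta_{jk}\bigr]v_jv_k = 2\lambda(4-\gamma)|\nabla v|^2 .
\]
This is positive provided $\gamma<4$.

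Second, the mixed term $2(\ell_{j'j'}\delta_{jk}-\tilde\ell\delta_{jk})_j v v_k$ vanishes, because the coefficients are constant.

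Third, the $v^2$ terms combine as follows:
\begin{itemize}
\item the cubic part is $2(\ell_j\ell_j)_k\ell_k + 2|\nabla\ell|^2\tilde\ell = 16\lambda^3\varphi+8\gamma\lambda^3\varphi$;
\item the subtracted part is $-\{2\tilde\ell^2-2\,\ell_{jj}\,\tilde\ell\}$, which is $O(\lambda^2)$.
\end{itemize}
So the $v^2$ terms total $8(2+\gamma)\lambda^3\varphi\,v^2+O(\lambda^2)v^2$. Since $\varphi\ge c_0^2\ge1$ on $\Omega$, choosing e.g.\ $\gamma=2$ and $\lambda\ge\lambda_0$ large makes this at least $c\lambda^3v^2$.

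\textbf{Pointwise estimate.} The above gives
\[
e^{2\ell}|\Delta u|^2\ge c\lambda|\nabla v|^2+c\lambda^3v^2+\nabla\cdot F .
\]

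\textbf{Integration and boundary terms.} Integrate over $\Omega$ and apply the divergence theorem. From the formula for $F$ in Proposition \ref{prop-2511},
\[
|F|\le C\bigl(\lambda|\nabla v|^2+\lambda^2|v||\nabla v|+\lambda^3v^2\bigr)\le C\bigl(\lambda|\nabla v|^2+\lambda^3v^2\bigr).
\]

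\textbf{Returning to $u$.} Use $\nabla v=e^{\ell}(\nabla u+u\nabla\ell)$, so that
\[
|\nabla v|^2\le 2e^{2\ell}\bigl(|\nabla u|^2+C\lambda^2u^2\bigr).
\]
This gives the boundary integrand bound $e^{2\lambda\varphi}(\lambda|\nabla u|^2+\lambda^3u^2)$. Conversely,
\[
e^{2\ell}|\nabla u|^2\le 2|\nabla v|^2+C\lambda^2v^2 ,
\]
so the interior terms $\lambda|\nabla v|^2+\lambda^3v^2$ dominate $c\,e^{2\lambda\varphi}(\lambda|\nabla u|^2+\lambda^3u^2)$ once $\lambda$ is large.

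\textbf{Extension to $H^2$.} Density of $C^2(\overline\Omega)$ in $H^2(\Omega)$, together with the trace theorem for the boundary terms, extends the estimate to all $u\in H^2(\Omega)$.

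\textbf{Main obstacle.} The main work is the sign and size bookkeeping in the proposition's $v^2$ coefficient. This step is where $\varphi\ge1$ (i.e.\ $\operatorname{dist}(x_0,\Omega)\ge1$) and the choice of $\tilde\ell$ are used.
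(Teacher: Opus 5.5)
Your proof is correct. The paper gives no proof of this lemma and simply quotes it from \cite{ma2025init}*{Lemma 2.5}; specialising Proposition \ref{prop-2511} to $a^{jk}=\delta_{jk}$, $\ell=\lambda|x-x_0|^2$, $\tilde\ell=\gamma\lambda$, integrating over $\Omega$, and passing between $v=e^{\ell}u$ and $u$ is the natural route, and your steps go through (your boundary constant is normalised to $1$ by dividing through). Two bookkeeping slips are harmless: the cubic term is $2\nabla(|\nabla\ell|^2)\cdot\nabla\ell=32\lambda^3\varphi$ (not $16\lambda^3\varphi$), and the subtracted bracket also contains $2\nabla\cdot(\Delta\ell\,\nabla\ell)=8n^2\lambda^2$, so the $v^2$ coefficient is $(32+8\gamma)\lambda^3\varphi-O(\lambda^2)$ and any $\gamma\in(-4,4)$ works.
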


\section{Some related lemmas} \label{sec:lemmas-SS26}

Choose a constant $c$ such that $c^2 \in (0, c_0^2)$ and consider the domain $G_{c^2}$ defined by
\[
G_{c^2}=\{(t,x)\in Q_T: \psi(t,x)>c^2\}.
\]

\begin{figure}[htbp]
\centering

\begin{tikzpicture}[
    >=Latex,
    font=\small,
    line cap=round,
    line join=round
]


\draw[->,thick]
(-1.65,0)--(6.75,0)
node[below right]{$x$};

\draw[->,thick]
(0,0)--(0,4.35)
node[above]{$t$};


\draw[thick] (0,0)--(0,4);
\draw[thick] (6,0)--(6,4);


\draw[very thick,green!60!black]
(0,0)--(6,0);

\node[below=5pt] at (3,0) {$\Omega$};


\fill (-1.2,0) circle (1.7pt);
\node[below left] at (-1.2,0) {$x_0$};

\draw[<->,thick]
(-1.2,-0.42)--(0,-0.42);

\node[below] at (-0.6,-0.42)
{$\operatorname{dist}(x_0,\Omega)\ge1$};


\fill[purple!15]
(0,0)
--
plot[
domain=0:6,
samples=180
]
(\x,{sqrt(((\x+1.2)^2-1)/2)})
--
(6,0)
--cycle;


\draw[very thick,blue]
(0,{sqrt(((0+1.2)^2-1)/2)})
plot[
domain=0:6,
samples=180
]
(\x,{sqrt(((\x+1.2)^2-1)/2)});


\draw[very thick,red]
(0,0)--
(0,{sqrt(((0+1.2)^2-1)/2)});

\draw[very thick,red]
(6,0)--
(6,{sqrt(((6+1.2)^2-1)/2)});


\node at (3.15,1.05)
{$G_{c^2}$};


\draw[->,thick,blue]
(4.55,3.30)
--
(4.15,{sqrt(((4.15+1.2)^2-1)/2)+0.06});

\node[
anchor=west,
text=blue
]
at(4.60,3.35)
{$\partial_1G_{c^2}$};


\draw[->,thick,red]
(-0.55,0.22)
--
(-0.02,0.22);

\node[
anchor=east,
text=red
]
at(-0.58,0.22)
{$\partial_2G_{c^2}$};


\draw[->,thick,red]
(6.55,1.55)
--
(6.02,1.55);

\node[
anchor=west,
text=red
]
at(6.58,1.55)
{$\partial_2G_{c^2}$};


\draw[->,thick,green!60!black]
(2,-0.95)
--
(2,-0.06);

\node[
text=green!60!black,
align=center
]
at(2,-1.22)
{$\partial_3G_{c^2}$};

\end{tikzpicture}

\caption{
Schematic one-dimensional cross-section illustration of $G_{c^2}$ and the decomposition
\[
\partial G_{c^2}
=
\partial_1G_{c^2}
\cup
\partial_2G_{c^2}
\cup
\partial_3G_{c^2},
\]
where $\partial_1G_{c^2}$ is the curved boundary,
$\partial_2G_{c^2}$ consists of the two lateral boundary segments,
and $\partial_3G_{c^2}$ is the bottom boundary.
}

\end{figure}

The boundary of $G_{c^2}$ can be divided into three parts:
\begin{equation*}
    \partial G_{c^2} := \partial_1 G_{c^2}\cup \partial_2 G_{c^2} \cup \partial_3 G_{c^2},
\end{equation*}
where
\begin{equation*}
    \left\{\begin{aligned}
        \partial_{1}G_{c^2}&=\{(t,x)\in Q_T: \psi(t,x)=c^2\},\\
        \partial_{2}G_{c^2}&=\{(t,x)\in \Sigma_T: \psi(t,x)>c^2\},\\
        \partial_{3}G_{c^2}&=\{(t,x): x\in \Omega ,t=0\}.
    \end{aligned}\right.
\end{equation*}
Recall that $\beta=\frac{c_0}{2M^{1/2}d^{1/4}}T^{-1}$ by \eqref{CWF}, so that $\beta T^2=\frac{c_0T}{2M^{1/2}d^{1/4}}$. In view of the third term in the definition of $T_0$ in Theorem \ref{thm}, the assumption $T_0<T$ implies $\beta T^2>2(d^2-c_0^2)$. Choose $\delta$ with $0<\delta<\min\{c_0^2,\ d^2-c_0^2\}$ and $c^2\in(c_0^2-\delta, c_0^2-\frac12\delta)$. Then
\[
\beta T^2>2(d^2-c_0^2)>d^2-c_0^2+\delta>d^2-c^2,
\]
so that $\psi(T,x)=|x-x_0|^2-\beta T^2\le d^2-\beta T^2<c^2$ for all $x\in\Omega$, whence
\[
\overline{G_{c^2}}\cap\{t=T\}=\emptyset.
\]
Furthermore, choose a sufficiently small positive constant $\sigma$ such that
\begin{equation}\label{sigma}
c^2+3\sigma\in (c_0^2-\delta ,c_0^2-\frac{1}{2}\delta ).
\end{equation}
Obviously, $G_{c^2+3\sigma}\subset G_{c^2+2\sigma}\subset G_{c^2+\sigma}\subset G_{c^2}$. Finally, since $c^2+3\sigma<c_0^2$, choose $\xi$ with
\[
\xi=\frac12\sqrt{\frac{c_0^2-(c^2+3\sigma)}{\beta}},
\]
so that $\psi(t,x)\ge c_0^2-\beta t^2>c^2+3\sigma$ for $t<\xi$ and $x\in\Omega$, which gives
\begin{equation*}
    Q_{\xi }:= (0,\xi )\times\Omega\subset G_{c^2+3\sigma}.
\end{equation*}
By using Condition \eqref{d}, we can know that
\begin{equation}\label{3.4}
    \mathcal{C}^2(t, x)
    \begin{cases}
        \geq \exp\left[2\lambda(c^2 + 3\sigma)\right], & \text{if } (t, x) \in G_{c^2 + 3\sigma}, \\
        \leq \exp\left[2\lambda(c^2 + 2\sigma)\right], & \text{if } (t, x) \in G_{c^2} \backslash G_{c^2 + 2\sigma},\\
        \leq \exp(2\lambda d^2),& \text{if } (t,x)\in \overline{Q}_T.
    \end{cases}
\end{equation}

\begin{lem} \label{lemma-3.1}
    For all functions $s=s(x)\in C(\overline{\Omega})$ there exists a constant $N$ such that the following estimate holds:
   \begin{equation} \label{1.56}
    	\int_{G_{c^2}} s(x)^2 \mathcal{C}^2 \dif x \dif t
		\leq N \int_{Q_\xi} s(x)^2 \mathcal{C}^2 \dif x \dif t.
   \end{equation}
\end{lem}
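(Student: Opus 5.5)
We prove \eqref{1.56} by comparing both sides pointwise in $x$: we integrate out the time variable explicitly, using the product structure of the weight, $\mathcal C^2(t,x)=e^{2\lambda|x-x_0|^2}e^{-2\lambda\beta t^2}$. For fixed $x\in\Omega$, the $t$-section of $G_{c^2}$ is
\[
I(x)=\Bigl\{t\in(0,T):\ \beta t^2<|x-x_0|^2-c^2\Bigr\}.
\]
This is an interval $(0,t^*(x))$ with $t^*(x)\le T$. Since $s$ depends only on $x$, Fubini's theorem gives
\[
\int_{G_{c^2}} s^2\mathcal C^2\dif x\dif t=\int_\Omega s(x)^2e^{2\lambda|x-x_0|^2}\Bigl(\int_{I(x)}e^{-2\lambda\beta t^2}\dif t\Bigr)\dif x .
\]
Likewise,
\[
\int_{Q_\xi} s^2\mathcal C^2\dif x\dif t=\int_\Omega s(x)^2e^{2\lambda|x-x_0|^2}\Bigl(\int_0^\xi e^{-2\lambda\beta t^2}\dif t\Bigr)\dif x .
\]

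It therefore suffices to prove the one-dimensional bound
\[
\int_0^{T}e^{-2\lambda\beta t^2}\dif t\le N\int_0^\xi e^{-2\lambda\beta t^2}\dif t,
\]
because $I(x)\subset(0,T)$ and $s^2e^{2\lambda|x-x_0|^2}\ge0$. We split the left-hand integral as $\int_0^\xi+\int_\xi^T$. On $(\xi,T)$ the integrand is at most $e^{-2\lambda\beta\xi^2}$. On $(0,\xi)$ the integrand is at least $e^{-2\lambda\beta\xi^2}$, so $\int_0^\xi e^{-2\lambda\beta t^2}\dif t\ge \xi e^{-2\lambda\beta\xi^2}$. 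Hence
\[
\int_\xi^T e^{-2\lambda\beta t^2}\dif t\le (T-\xi)e^{-2\lambda\beta\xi^2}\le \frac{T-\xi}{\xi}\int_0^\xi e^{-2\lambda\beta t^2}\dif t .
\]
This yields the bound with $N=T/\xi$.

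The constant $N=T/\xi$ depends only on $T$, $\beta$, $c_0$, $c$ and $\sigma$, through the explicit formula for $\xi$. In particular, it is independent of $\lambda$. This independence is the only delicate point, and the monotonicity of $t\mapsto e^{-2\lambda\beta t^2}$ handles it, since it holds for every $\lambda>0$. No regularity beyond the continuity of $s$ is needed; continuity only serves to make the integrals finite. The inclusion $Q_\xi\subset G_{c^2+3\sigma}\subset G_{c^2}$ established above is not needed for this estimate itself. It only guarantees that the right-hand side is consistent with the domain on which the inequality is later used.
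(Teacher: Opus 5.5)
Your proof is correct and is essentially the paper's argument: you apply Fubini and then use the monotonicity of $t\mapsto e^{-2\lambda\beta t^2}$ to compare the time integral over the $t$-section of $G_{c^2}$ with the one over $(0,\xi)$, which yields the same $\lambda$-independent constant $N=T/\xi$. The only cosmetic difference is that you enlarge the section to $(0,T)$ before splitting at $\xi$, whereas the paper compares directly on $(0,t_{c^2}(x))$ using $t_{c^2}(x)>\xi$ and then $t_{c^2}(x)\le T$; this is why your remark that the inclusion $Q_\xi\subset G_{c^2}$ is not needed is accurate.
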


\begin{proof}
	For each $x \in \Omega$, the cross-section of $G_{c^2}$ in the time direction is the interval $(0,t_{c^2}(x))$ with
	\[
	t_{c^2}(x) = \sqrt{\frac{|x - x_0|^2 - c^2}{\beta}},
	\]
	and the function $\theta(t)=\exp(-2\lambda\beta t^2)$ is decreasing for $t>0$. Since $Q_\xi\subset G_{c^2+3\sigma}\subset G_{c^2}$, we have $t_{c^2}(x)>\xi$ for all $x\in\Omega$. Using that $\theta$ is decreasing, we have $\int_0^{t}\theta(\tau)\dif\tau \le \frac{t}{\xi}\int_0^{\xi}\theta(\tau)\dif\tau$ for every $t\ge\xi$. Consequently, with $t_{c^2}(x)\le T$,
	\begin{align*}
	\int_{G_{c^2}} s(x)^2 \mathcal{C}^2 \dif x \dif t
	&= \int_{\Omega} s(x)^2 e^{2\lambda|x-x_0|^2} \Bigl( \int_0^{t_{c^2}(x)} e^{-2\lambda\beta t^2} \dif t \Bigr) \dif x \\
	&\le \int_{\Omega} s(x)^2 e^{2\lambda|x-x_0|^2} \frac{t_{c^2}(x)}{\xi} \Bigl( \int_0^{\xi} e^{-2\lambda\beta t^2} \dif t \Bigr) \dif x \\
	&\le \frac{T}{\xi} \int_{Q_\xi} s(x)^2 \mathcal{C}^2 \dif x \dif t,
	\end{align*}
	which is \eqref{1.56} with $N=T/\xi$. The proof is done.
\end{proof}

After presenting a basic lemma, we turn to the discussion of energy estimate.
Set $\Omega_0:=\{0\}\times\Omega$ and $\Omega_T:=\{T\}\times\Omega$.

\begin{lem}\label{lemma-3.2}
    $w\in H^2(Q_T)$ satisfy the following hyperbolic equation, 
    \begin{equation*} 
    \left\{\begin{aligned}
        p_1^{-1}w_{tt} &=\Delta w +f, && (t,x) \in Q_T , \\
        w &= g_t, && (t,x) \in \Sigma_T , \\
    \end{aligned}\right.
	\end{equation*}
where $p_1^{-1}\in L^\infty(\Omega)$, $f\in L^2(Q_T)$, $g_t\in H^1(\Sigma_T)$, and $h_t=\partial_\nu w\in L^2(\Sigma_T)$ denotes the normal derivative on $\Sigma_T$. Suppose that $w$ is a solution of this system and that $\mathcal{C}$ is a Carleman weight function. Then there exist positive constants $M',N$ such that
	\begin{equation*}
		\begin{aligned}
	        \int_{\Omega_0} (|\nabla w|^2 + p_1^{-1} w_t^2 + M'\lambda^2 w^2)\mathcal{C}^2 \dif x &\leq \int_{\Omega_T} (|\nabla w|^2 + p_1^{-1} w_t^2 + M'\lambda^2 w^2)\mathcal{C}^2 \dif x \\
	        &+N\int_{Q_T} \bigl(\lambda |\nabla  w|^2 +\lambda w_t^2+ \lambda^3 w^2 +f^2 \bigr)\mathcal{C}^2 \dif x \dif \tau \\
	        &+ N\int_{\Sigma_T} \bigl(\lambda^2 g_t^2  + g_{tt}^2 + h_t^2 \bigr) \mathcal{C}^2 \dif S \dif \tau.
	    \end{aligned}
	\end{equation*}
\end{lem}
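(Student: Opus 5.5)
This is a weighted energy identity obtained by multiplying the equation by $w_t\mathcal C^2$ and integrating over $Q_T$. The term $M'\lambda^2w^2$ on $\Omega_0$ is produced by a companion identity for $\partial_t(\lambda^2 w^2\mathcal C^2)$.

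First, I multiply $p_1^{-1}w_{tt}-\Delta w=f$ by $2w_t\mathcal C^2$ and use the pointwise identities
\[
2p_1^{-1}w_{tt}w_t\mathcal C^2=\partial_t\bigl(p_1^{-1}w_t^2\mathcal C^2\bigr)-p_1^{-1}w_t^2\,\partial_t(\mathcal C^2),
\]
\[
-2\Delta w\,w_t\mathcal C^2=-\nabla\cdot\bigl(2w_t\nabla w\,\mathcal C^2\bigr)+\partial_t\bigl(|\nabla w|^2\mathcal C^2\bigr)-|\nabla w|^2\partial_t(\mathcal C^2)+2w_t\nabla w\cdot\nabla(\mathcal C^2).
\]
Since $\partial_t\mathcal C^2=-4\lambda\beta t\,\mathcal C^2$ and $\nabla\mathcal C^2=4\lambda(x-x_0)\mathcal C^2$, the derivatives falling on the weight are bounded by $N\lambda(|\nabla w|^2+w_t^2)\mathcal C^2$. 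Here $N$ depends on $T,\beta,d$ and $\|p_1^{-1}\|_{L^\infty}$.

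Second, I integrate over $Q_T$. The $t$-divergence terms give the difference of $\int(|\nabla w|^2+p_1^{-1}w_t^2)\mathcal C^2$ over $\Omega_T$ and over $\Omega_0$. The source term is handled by $2|f w_t|\le f^2+w_t^2$. The spatial divergence produces the boundary term $-\int_{\Sigma_T}2w_t\,\partial_\nu w\,\mathcal C^2$. On $\Sigma_T$ we have $w_t=g_{tt}$ and $\partial_\nu w=h_t$, so this term is at most $\int_{\Sigma_T}(g_{tt}^2+h_t^2)\mathcal C^2$.

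Third, for the zero-order term I use
\[
\partial_t(w^2\mathcal C^2)=2ww_t\mathcal C^2-4\lambda\beta t\,w^2\mathcal C^2,
\]
multiply it by $M'\lambda^2$ and integrate over $Q_T$. This gives the difference of $\int M'\lambda^2w^2\mathcal C^2$ between $\Omega_T$ and $\Omega_0$, controlled by $N\int_{Q_T}\bigl(\lambda^2|ww_t|+\lambda^3w^2\bigr)\mathcal C^2$. By Cauchy–Schwarz, $\lambda^2|ww_t|\le\lambda w_t^2+\lambda^3w^2$. Adding this to the identity from the second step and moving the $\Omega_0$ terms to the left gives exactly the stated inequality. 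In this argument the $\lambda^2g_t^2$ boundary term is not needed, but it is harmless to keep, since an alternative route controls $w$ on the boundary through $g_t$. The constant $M'$ can be taken as any fixed positive number, for instance $M'=1$.

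The main obstacle is regularity. We only know $w\in H^2(Q_T)$, so the integrations by parts and the traces $w_t|_{\Sigma_T}$ and $\partial_\nu w|_{\Sigma_T}$ must be justified. I would first prove the estimate for smooth functions, then argue by density: approximate $w$ in $H^2(Q_T)$ by smooth functions, define $f$ through the equation, and pass to the limit using continuity of traces $H^2(Q_T)\to H^1(\Sigma_T)$ and $H^1(\Omega_0)$. A secondary concern is the sign of $\nabla\cdot(\ldots)$ on the piecewise smooth boundary. This is harmless because the boundary term is bounded in absolute value.
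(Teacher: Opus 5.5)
Your argument is correct, and it follows a somewhat different arrangement of the weighted energy method than the paper's.

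The paper conjugates first. It sets $\tilde w=\mathcal C w$ and differentiates the unweighted energy $e_{\tilde w}(t)=\frac12\int_\Omega(|\nabla\tilde w|^2+p_1^{-1}\tilde w_t^2+M'\lambda^2\tilde w^2)\dif x$. It then rewrites $-\Delta\tilde w+p_1^{-1}\tilde w_{tt}+M'\lambda^2\tilde w$ in terms of $f$ and $w$ and bounds it by $N\mathcal C(|f|+\lambda|\nabla w|+\lambda|w_t|+\lambda^2|w|)$. Its boundary flux $\partial_\nu\tilde w\,\tilde w_t$ contains derivatives of the weight, which is where the $\lambda^2g_t^2$ term comes from. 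You instead apply the multiplier $2w_t\mathcal C^2$ directly to $w$, and you generate the zero-order term from the separate identity for $\partial_t(w^2\mathcal C^2)$.

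Your route gives an exact result. The time-boundary terms come out precisely as $\int(|\nabla w|^2+p_1^{-1}w_t^2+M'\lambda^2w^2)\mathcal C^2$ on $\Omega_0$ and $\Omega_T$ with coefficient one. Any $M'>0$ works, and the $\lambda^2g_t^2$ term is superfluous.

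In the paper's route, the closing step ``let $t=T$'' tacitly replaces $e_{\tilde w}$ by these $w$-quantities. But $|\nabla\tilde w|^2=\mathcal C^2|\nabla w+2\lambda(x-x_0)w|^2$ and $\tilde w_t=\mathcal C(w_t-2\lambda\beta t\,w)$. Converting back needs Young's inequality, which is what the choice $M'=4d^2+2$ is tuned for. Even then it yields the inequality only up to multiplicative constants on the $\Omega_0$ and $\Omega_T$ terms. That is harmless in the application, where the $\Omega_T$ term vanishes, but it is weaker than the statement as written. The paper's formulation has the merit of working in the same conjugated variable $\tilde w$ used in the Carleman estimate of Lemma~\ref{lemma-3.3}.

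Your density argument justifying the integrations by parts and traces for $w\in H^2(Q_T)$ addresses a point the paper leaves implicit.
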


\begin{proof}
	We introduce $\tilde{w} = \mathcal{C}w$.
    Consider
    \begin{equation}\label{e}
		e_{\tilde{w}}(t):=\frac{1}{2}\int_{\Omega} (|\nabla\tilde{w}|^2 + p_1^{-1}\tilde{w}_t^2 + M'\lambda^2 \tilde{w}^2) \dif x, \quad M' = 4d^2+2,
	\end{equation}
	 then differentiate \eqref{e} with respect to $t$, we have
	\begin{equation*}
    \begin{aligned}
        e'_{\tilde{w}}(t) &=\int_{\Omega} \bigl(\nabla\tilde{w}\cdot\nabla\tilde{w}_t + p_1^{-1}\tilde{w}_t\tilde{w}_{tt} + M'\lambda^2\tilde{w}\tilde{w}_t\bigr) \dif x \\
        &=\int_{\Omega} \bigl(-\Delta\tilde{w} + p_1^{-1}\tilde{w}_{tt} + M'\lambda^2\tilde{w}\bigr)\tilde{w}_t \dif x+\int_{\partial\Omega} \frac{\partial\tilde{w}}{\partial \nu}\tilde{w}_t \dif S.\notag
    	\end{aligned}
	\end{equation*}
Integrating the above equality over $(0,t)$, we obtain
\begin{equation}\label{912+}
    \begin{aligned}
        e_{\tilde{w}}(t)-e_{\tilde{w}}(0)&=\int_0^t e'_{\tilde{w}}(\tau) \dif \tau\\
        &=\int_0^t \int_{\Omega} \bigl(-\Delta\widetilde{w} + p_1^{-1}\widetilde{w}_{tt} + M'\lambda^2\widetilde{w}\bigr)\widetilde{w}_t \dif x \dif \tau+\int_0^t \int_{\partial\Omega} \frac{\partial\widetilde{w}}{\partial \nu}\widetilde{w}_t \dif S \dif \tau.
    \end{aligned}
\end{equation}
Since \(\widetilde w=\mathcal Cw\), we have
\[
\widetilde w_t=\mathcal C_t w+\mathcal Cw_t,
\qquad
\nabla\widetilde w
=\mathcal C\nabla w+w\nabla\mathcal C,
\]
\[
\widetilde w_{tt}
=
\mathcal Cw_{tt}
+2\mathcal C_tw_t
+\mathcal C_{tt}w,\qquad \Delta\widetilde w
=
\mathcal C\Delta w
+2\nabla\mathcal C\cdot\nabla w
+w\Delta\mathcal C.
\]
Consequently,
\begin{equation*}
	\begin{aligned}
-\Delta\widetilde w
+p_1^{-1}\widetilde w_{tt}
+M'\lambda^2\widetilde w
={}&
\mathcal C\left(-\Delta w+p_1^{-1}w_{tt}\right)
-2\nabla\mathcal C\cdot\nabla w
-(\Delta\mathcal C)w\\ &
+2p_1^{-1}\mathcal C_tw_t
+p_1^{-1}\mathcal C_{tt}w
+M'\lambda^2\mathcal Cw.
\end{aligned}
\end{equation*}
Using the equation $p_1^{-1}w_{tt}=\Delta w+f,$ we obtain
\[
-\Delta\widetilde w
+p_1^{-1}\widetilde w_{tt}
+M'\lambda^2\widetilde w
={}
\mathcal C f
-2\nabla\mathcal C\cdot\nabla w
-(\Delta\mathcal C)w
+2p_1^{-1}\mathcal C_tw_t
+p_1^{-1}\mathcal C_{tt}w
+M'\lambda^2\mathcal Cw.
\]
For the Carleman weight function \(\mathcal C\), we have
\[
|\mathcal C_t|+|\nabla\mathcal C|
\leq N\lambda\mathcal C,
\qquad
|\Delta\mathcal C|+|\mathcal C_{tt}|
\leq N\lambda^2\mathcal C.
\]
Hence
$$
\begin{aligned}
\left|
-\Delta\widetilde w
+p_1^{-1}\widetilde w_{tt}
+M'\lambda^2\widetilde w
\right|\leq
N\mathcal C
\left(
|f|+\lambda|\nabla w|
+\lambda|w_t|+\lambda^2|w|
\right).
\end{aligned}
$$
On the other hand,
$$
|\widetilde w_t|
\leq
|\mathcal C_t||w|+\mathcal C|w_t|
\leq
N\mathcal C
\left(\lambda|w|+|w_t|\right).
$$
Therefore, by Young's inequality,
$$
\begin{aligned}
\left|
\left(
-\Delta\widetilde w
+p_1^{-1}\widetilde w_{tt}
+M'\lambda^2\widetilde w
\right)\widetilde w_t
\right|&\leq
N\mathcal C^2
\left(
|f|+\lambda|\nabla w|
+\lambda|w_t|+\lambda^2|w|
\right)
\left(
|w_t|+\lambda|w|
\right)
\\
&\leq
N\mathcal C^2
\left(
f^2+\lambda|\nabla w|^2
+\lambda w_t^2+\lambda^3w^2
\right).
\end{aligned}
$$
Thus,
$$
\begin{aligned}
\left|
\int_0^t\int_\Omega
\left(
-\Delta\widetilde w
+p_1^{-1}\widetilde w_{tt}
+M'\lambda^2\widetilde w
\right)\widetilde w_t
\,dx\,d\tau
\right|\leq&
N\int_0^t\int_\Omega
\left(
\lambda|\nabla w|^2
+\lambda w_t^2
+\lambda^3w^2
\right)\mathcal C^2\,dx\,d\tau
\\
&+
N\int_0^t\int_\Omega
f^2\mathcal C^2\,dx\,d\tau.
\end{aligned}
$$

It remains to estimate the boundary term. Since
\[
w=g_t\qquad\text{on }\Sigma_T.
\]
Similarly, using the corresponding boundary condition for the normal derivative, we have
$$
|\partial_\nu\widetilde w|^2
\leq
N\mathcal C^2
\left(
h_t^2+\lambda^2g_t^2
\right),
$$
It follows from Young's inequality that
$$
\begin{aligned}
\left|
\int_0^t\int_{\partial\Omega}
\frac{\partial\widetilde w}{\partial\nu}
\widetilde w_t\,dS\,d\tau
\right|\leq
N\int_0^t\int_{\partial\Omega}
\left(
\lambda^2g_t^2
+g_{tt}^2+h_t^2
\right)\mathcal C^2\,dS\,d\tau.
\end{aligned}
$$
Combining the above estimates with the energy identity \eqref{912+}, we conclude that
\begin{align*}
    e_{\widetilde{w}}(0)
    & \leq e_{\widetilde{w}}(t)+N\int_0^t \int_{\Omega} \bigl(\lambda |\nabla w|^2 +\lambda w_t^2+ \lambda^3 w^2\bigr)\mathcal{C}^2 \dif x \dif \tau \\
    & \quad + N\int_0^t \int_{\Omega} f^2 \mathcal{C}^2 \dif x \dif \tau + N\int_0^t \int_{\partial\Omega} \bigl(\lambda^2g_t^2 + g_{tt}^2 + h_t^2\bigr) \mathcal{C}^2 \dif S \dif \tau.\notag
\end{align*}
Let \,$t=T$\,, then we have
\begin{equation*}
    \begin{aligned}
        \int_{\Omega_0} (|\nabla w|^2 + p_1^{-1} w_t^2 + M'\lambda^2 w^2)\mathcal{C}^2 \dif x &\leq \int_{\Omega_T} (|\nabla w|^2 + p_1^{-1} w_t^2 + M'\lambda^2 w^2)\mathcal{C}^2 \dif x \\
        &+N\int_{Q_T} \bigl(\lambda |\nabla  w|^2 +\lambda w_t^2+ \lambda^3 w^2 +f^2 \bigr)\mathcal{C}^2 \dif x \dif \tau \\
        &+ N\int_{\Sigma_T} \bigl(\lambda^2 g_t^2 + g_{tt}^2 + h_t^2\bigr) \mathcal{C}^2 \dif S \dif \tau.
    \end{aligned}
\end{equation*}
The proof is done.
\end{proof}

To complete the proof of our main theorem, the following lemma derived from a Carleman estimate is indispensable.
\begin{lem} \label{lemma-3.3}
     Suppose $w\in C^2(Q_T)$ is the solution of the hyperbolic equation \eqref{w} (so that $w_t(0,x)=0$ by \eqref{eq:wt0}), $\mathcal{C},\psi$ is defined as in \eqref{CWF}, and the parameters $\beta,a,M,d$ satisfy \eqref{eq:betaband}. Then there exists a constant $N$ such that the following Carleman estimate holds:
    \begin{align*}
		&\int_{G_{c^2}} \lambda(|\nabla_{t,x} w|^2 + \lambda^2 w^2) \mathcal{C}^2 \dif x\dif \tau \\
		\leq & \, N\int_{G_{c^2}} \bigl(|\nabla_{t,x} w|^2 +w^2+|\Delta q|^2 + |\nabla q|^2 + q^2\bigr) \mathcal{C}^2 \dif x\dif \tau\\
		&+N\int_{\partial_1 G_{c^2}} (\lambda|\nabla_{t,x} w|^2+\lambda^3 w^2) \mathcal{C}^2 \dif S \dif \tau+ N\int_{\Sigma_T} (\lambda|\nabla_{t,x} w|^2+\lambda^3 w^2) \mathcal{C}^2 \dif S \dif \tau.
	\end{align*}
\end{lem}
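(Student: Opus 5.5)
We plan to apply the pointwise Carleman estimate of Lemma \ref{lemma-1} to $w$, after rewriting the equation in \eqref{w} in the form $r(x)w_{tt}-\Delta w = \text{(lower order)}$, and then integrate over $G_{c^2}$.

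First, divide the equation $\partial_t^2 w=(\mathcal L_{p_1}+F[p_1])w+\partial_t\mathcal A(p_1,p_2)$ by $p_1$. Using $\mathcal L_{p_1}=p_1\Delta-\nabla p_1\cdot\nabla-\Delta p_1$, this gives
\[
p_1^{-1}w_{tt}-\Delta w = -p_1^{-1}\nabla p_1\cdot\nabla w+p_1^{-1}\bigl(F[p_1]-\Delta p_1\bigr)w+p_1^{-1}\partial_t\mathcal A(p_1,p_2).
\]
We take $r=p_1^{-1}$. Since $p_1\in W(a,M,x_0)$, we have $1\le r\le a$. Moreover $(x-x_0,\nabla r)=-p_1^{-2}(x-x_0,\nabla p_1)\ge 0$, so $\frac12+(x-x_0,\nabla r)>0$. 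The hypotheses of Lemma \ref{lemma-1} therefore hold; the constraint \eqref{eq:betaband} on $\beta$ enters only through the constants $\lambda_0$ and $C$.

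Next we bound the source term. By definition, $\partial_t\mathcal A=(\mathcal L_q+F[p_1]-F[p_2])\partial_t^4U_2$. Write $F[p_1]-F[p_2]=p_1^{-1}(\nabla q)\cdot(\nabla p_1+\nabla p_2)-q\,p_1^{-1}p_2^{-1}|\nabla p_2|^2$. Using $\|u_2\|_{C^6}\le M_1$ and $\|p_2\|_{C^5}\le M$, we get the pointwise bound
\[
|\partial_t\mathcal A|\le N\bigl(|\Delta q|+|\nabla q|+|q|\bigr).
\]
Hence the right-hand side is bounded pointwise by $N(|\nabla w|+|w|+|\Delta q|+|\nabla q|+|q|)$. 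Squaring and multiplying by $\mathcal C^2$, Lemma \ref{lemma-1} yields, pointwise in $G_{c^2}$,
\[
C\lambda\bigl(|\nabla_{t,x}w|^2+\lambda^2w^2\bigr)\mathcal C^2+\nabla\cdot\mathcal U+\partial_tV\le N\bigl(|\nabla_{t,x}w|^2+w^2+|\Delta q|^2+|\nabla q|^2+q^2\bigr)\mathcal C^2 .
\]

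We then integrate over $G_{c^2}$ and apply the divergence theorem in $(t,x)$. The boundary splits as $\partial_1G_{c^2}\cup\partial_2G_{c^2}\cup\partial_3G_{c^2}$, and there is no contribution from $t=T$ because $\overline{G_{c^2}}\cap\{t=T\}=\emptyset$. On $\partial_1G_{c^2}$ and $\partial_2G_{c^2}\subset\Sigma_T$, the bound $|(\mathcal U,V)|\le C\lambda(|\nabla_{t,x}w|^2+\lambda^2w^2)\mathcal C^2$ gives exactly the two surface integrals in the statement.

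The main obstacle is the bottom boundary $\partial_3G_{c^2}=\{t=0\}$. There the flux is $-V(0,x)$, and the crude bound would leave a term $\lambda\int_{\Omega}(|\nabla w(0)|^2+\lambda^2w(0)^2)\mathcal C^2$, which does not appear in the statement. The way out is to examine the explicit form of $V$ in Klibanov's construction, Theorem 2.2.4 of \cite{Klibanov2012book}. There, every term of $V$ contains a factor $w_t$ or $\psi_t=-2\beta t$. Since $w_t(0,x)=0$ by \eqref{eq:wt0} and $\psi_t(0,x)=0$, we get $V(0,x)=0$, so the bottom term vanishes.

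If some piece of $V$ carried a favourable sign instead of vanishing, that piece could simply be dropped. Dividing by $C$ then completes the proof.
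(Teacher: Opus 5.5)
Your argument is correct, but it obtains the pointwise inequality by a different route from the paper.

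The paper does not invoke Lemma~\ref{lemma-1} here. It applies the explicit identity of Proposition~\ref{prop-2511} to $\tilde w=\mathcal C w$, with $a^{tt}=p_1^{-1}$, $a^{ii}=-1$, $\ell=\lambda\psi$ and $\tilde\ell=-8\lambda\beta$. It then checks by hand that the coefficients of $\tilde w_t^2$, $|\nabla\tilde w|^2$ and $\tilde w^2$ are positive. This is exactly where \eqref{eq:betaband} and $a<2$ are used; for example, the $\tilde w_t^2$ coefficient is bounded below by $8\lambda\beta a(2-a)$.

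You instead use Lemma~\ref{lemma-1} as a black box with $r=p_1^{-1}$. Its hypotheses follow directly from $p_1\in W(a,M,x_0)$, so in your route \eqref{eq:betaband} and $a<2$ are not needed at all. Your remark that they ``enter only through $\lambda_0$ and $C$'' is therefore slightly off: they do not enter. You also make explicit the bound $|\partial_t\mathcal A|\le N(|\Delta q|+|\nabla q|+|q|)$, which the paper leaves implicit.

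The price of your route is the bottom boundary. The vanishing of $V(0,\cdot)$ is not part of Lemma~\ref{lemma-1} as stated, so you must import it from the proof in \cite{Klibanov2012book}. Your structural claim is right: in the standard construction every time-flux term carries $\psi_t$ or $v_t$, where $v=\mathcal C w$, and $v_t(0,\cdot)=\mathcal C(0,\cdot)w_t(0,\cdot)=0$. It is nonetheless an external fact rather than something you verify. The paper's explicit route gets this directly: every $t$-independent term of $F^t$ contains $\tilde w_t$, which vanishes at $t=0$ by \eqref{eq:wt0}.
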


\begin{proof}
    Suppose $a^{jk}(x)$ be a real symmetric matrix-valued, $C^2$-smooth function and let $\widetilde{\ell}$ is $C^2$-smooth. Since $\psi_{t}=-2\beta t$, $\psi_{tt}=-2\beta$, $\psi_i=2(x_i-x_{0i})$, $\psi_{ii}=2$, $\psi_{ij}=2\delta_{ij}$, then applying Proposition \ref{prop-2511} to $\tilde{w}=\mathcal{C}w$ with $a^{tt}=p_1^{-1}$, $a^{ii}=-1$ and $\ell=\lambda\psi$, we obtain
	\begin{equation}\label{ell}
\begin{aligned}
e^{2\lambda\psi}
\left|
p_1^{-1}w_{tt}-\Delta_x w
\right|^2
&\geq(-8\lambda\beta p_1^{-2}
-2p_1^{-1}\widetilde{\ell}
+4\lambda(x-x_0)\cdot\nabla(p_1^{-1}))\widetilde{w}_t^2
\\
&+(
8\lambda+2\widetilde{\ell})
\left|\nabla\widetilde{w}\right|^2
-2p_1^{-1}\widetilde{\ell}_t
\widetilde{w}\widetilde{w}_t
+
2[
2\lambda\beta\nabla(p_1^{-1})
+\nabla\widetilde{\ell}
]\cdot
\widetilde{w}\nabla\widetilde{w}
\\
&+
D\widetilde{w}^2
+\partial_t F^t
+
\sum_{j=1}^{n}
\partial_{j}F^j,
\end{aligned}
\end{equation}
\begin{equation*}\left\{
\begin{aligned}
F^t&={}4\lambda\beta t\,p_1^{-2}\tilde w_t^2
+4\lambda\beta\,p_1^{-2}\tilde w_t\tilde w
+\left(
16\lambda^3\beta^3t^3
+8\lambda^2\beta^2t
\right)p_1^{-2}\tilde w^2
\\
&
+4\lambda\beta t\,p_1^{-1}
\left|\nabla\tilde w\right|^2
+8\lambda p_1^{-1}
\bigl((x-x_0)\cdot\nabla\tilde w\bigr)\tilde w_t+
(
4n\lambda p_1^{-1}
+2\tilde\ell p_1^{-1}
)\tilde w_t\tilde w
\\
&
+\left[
-16\lambda^3\beta t\,p_1^{-1}|x-x_0|^2
+8n\lambda^2\beta t\,p_1^{-1}
\right]\tilde w^2,\\
F^j&={}-4\lambda p_1^{-1}(x_j-x_{0,j})\tilde w_t^2
-8\lambda\beta t\,p_1^{-1}\tilde w_j\tilde w_t
+4\lambda(x_j-x_{0,j})
\left|\nabla\tilde w\right|^2
\\&
-8\lambda
\bigl((x-x_0)\cdot\nabla\tilde w\bigr)\tilde w_j
\\
&
+[
16\lambda^3\beta^2t^2p_1^{-1}(x_j-x_{0,j})
+8\lambda^2\beta t\,p_1^{-1}(x_j-x_{0,j})
-16\lambda^3|x-x_0|^2(x_j-x_{0,j})\\ &
+8n\lambda^2(x_j-x_{0,j})
]\tilde w^2-(
4\lambda\beta p_1^{-1}
+4n\lambda
+2\tilde\ell
)\tilde w_j\tilde w,
\qquad j=1,\ldots,n.
\end{aligned}
\right.
\end{equation*}
where
\begin{align*}
D
={}&
-32\lambda^3\beta^3p_1^{-2}t^2
+
32\lambda^3|x-x_0|^2
-16\lambda^3\beta^2t^2
\nabla(p_1^{-1})\cdot(x-x_0)\\ &-8\lambda^2\beta
\nabla(p_1^{-1})\cdot(x-x_0)
-8\lambda^2\beta^2p_1^{-2}
+8n\lambda^2
-8\lambda^2\beta p_1^{-1}
+8\lambda^2\beta^2t\,\nabla(p_1^{-1})\cdot(x-x_0)
\\ &+8\lambda^2\widetilde{\ell}
[\beta^2p_1^{-1}t^2
-
|x-x_0|^2]
-4\lambda
\left(
\beta p_1^{-1}+n
\right)
\widetilde{\ell}
-
\widetilde{\ell}^{\,2}.
\end{align*}
	Choose $\widetilde{\ell} = -8\lambda \beta $, which is independent of $x$ and $t$, so that $\widetilde{\ell}_t=\nabla\widetilde{\ell}=0$; hence the terms $-2p_1^{-1}\widetilde{\ell}_t\tilde w\tilde w_t$ and $2\nabla \widetilde{\ell}\cdot\tilde w\nabla \tilde{w}$ in \eqref{ell} vanish, while the cross term $4\lambda\beta\nabla(p_1^{-1})\cdot\tilde w\nabla\tilde w$ remains. Substituting $\widetilde{\ell}$ into \eqref{ell} and using $a^{-1}\leq p_1\leq 1$, the condition $(x-x_0,\nabla p_1)\le 0$ in \eqref{1.2}, and the bound $\lvert\nabla(p_1^{-1})\rvert=p_1^{-2}\lvert\nabla p_1\rvert\le a^2M$, we absorb the remaining cross term by Young's inequality:
\begin{equation*}
\begin{aligned}
4\lambda\beta\nabla(p_1^{-1})\cdot\tilde w\nabla\tilde w
&\ge -4\lambda\beta\,\lvert\nabla(p_1^{-1})\rvert\,\lvert\tilde w\rvert\,\lvert\nabla\tilde w\rvert= -2\bigl(2\beta\lvert\nabla(p_1^{-1})\rvert\,\lvert\nabla\tilde w\rvert\bigr)\bigl(\lambda\lvert\tilde w\rvert\bigr)\\
&\ge -\frac{8\beta^2}{a^4}\lvert\nabla(p_1^{-1})\rvert^2\lvert\nabla\tilde w\rvert^2-\frac{\lambda^2a^4}{2}\tilde w^2\\
&\ge -8\beta^2M^2\lvert\nabla\tilde w\rvert^2-\frac{\lambda^2a^4}{2}\tilde w^2,
\end{aligned}
\end{equation*}
where in the third line we used $2AB\le\varepsilon A^2+B^2/\varepsilon$ with $A=2\beta\lvert\nabla(p_1^{-1})\rvert\lvert\nabla\tilde w\rvert$, $B=\lambda\lvert\tilde w\rvert$, $\varepsilon=2/a^4$, and in the last line $\lvert\nabla(p_1^{-1})\rvert\le a^2M$. Suppose that the parameters satisfy
	\begin{equation}\label{eq:betaband}
		\beta <\min \{\frac{1}{4(1+M)}, \frac{M\sqrt d}{64}\},\qquad a<2
	\end{equation}
	Moreover, in $G_{c^2}$ we have $|x-x_0|^2>c^2+\beta t^2$, then
	\begin{equation*}
	\begin{aligned}
	D\ge{}& 16\lambda^3(c^2+2\beta t^2)
	-\lambda^2C(\beta,a,M,d,n),
	\end{aligned}
	\end{equation*}
	Finally, the coefficient of $\tilde w_t^2$ in \eqref{ell} equals $-8\lambda\beta p_1^{-2}-2p_1^{-1}\widetilde\ell+4\lambda(x-x_0)\cdot\nabla(p_1^{-1})=8\lambda\beta p_1^{-2}(2p_1-1)-4\lambda p_1^{-2}(x-x_0)\cdot\nabla p_1$. Since $(x-x_0)\cdot\nabla p_1\le 0$ and $p_1^{-2}(2p_1-1)\ge a(2-a)$ for $p_1\in[a^{-1},1]$ with $a<2$, this coefficient is bounded from below by $8\lambda\beta a(2-a)$. Therefore
	\begin{equation*}
	\begin{aligned}
	e^{2\lambda\psi}\bigl|p_1^{-1}w_{tt}-\Delta_x w\bigr|^2
	&\ge (8\lambda \beta a(2-a))\tilde w_t^2
	+[8\lambda (1-2\beta )-8\beta^2M^2]|\nabla\tilde w|^2\\
	&\quad + \bigl(16\lambda^3(c^2+2\beta t^2)-\lambda^2C(\beta,a,M,d,n)\bigr)\tilde w^2+\partial_t F^t+\sum_{j=1}^{n}\partial_jF^j,
	\end{aligned}
	\end{equation*} 
	Integrate the above pointwise estimate \eqref{ell} over $G_{c^2}$, we obtain
	\begin{align*}
		&\int_{G_{c^2}} \lambda( |\nabla_{t,x} w|^2 + \lambda^2 w^2) \mathcal{C}^2 \dif x \dif \tau \\
		\leq & \, N\int_{G_{c^2}} \bigl(|\nabla_{t,x} w|^2 +w^2+|\Delta q|^2 + |\nabla q|^2 + q^2\bigr) \mathcal{C}^2 \dif x \dif \tau\\
		& + N \int_{\partial_1 G_{c^2}} (\lambda|\nabla_{t,x} w|^2+\lambda^3 w^2) \mathcal{C}^2 \dif S \dif \tau+ N\int_{\Sigma_T} (\lambda|\nabla_{t,x} w|^2+\lambda^3 w^2) \mathcal{C}^2 \dif S \dif \tau.
	\end{align*}
	Here the boundary integral over $\{t=0\}\times\Omega$ vanishes: since $w_t(0,x)=0$ by \eqref{eq:wt0}, every term of $F^t|_{t=0}$ contains the factor $\tilde w_t(0,\cdot)=\mathcal C(0,\cdot)w_t(0,\cdot)=0$, so that $\int_{\Omega_0}F^t|_{t=0}\,\dif x=0$. The boundary integrals over $\partial_1G_{c^2}$ and $\Sigma_T$ are estimated by the explicit formulas of $F^t$ and $F^j$; each term of $F^t$, $F^j$ is a combination of $\lambda\beta t$-, $\lambda$- and $\lambda^3$-weighted quadratics in $\tilde w$, $\nabla\tilde w$, hence it is bounded by $C(\lambda|\nabla_{t,x}\tilde w|^2+\lambda^3\tilde w^2)$, which gives the last two terms of the right-hand side after rewriting $\tilde w=\mathcal C w$. We obtain the result.
\end{proof}
In order to further address the terms involving the initial values in our proof, the following lemma must be proved first, which is mainly based on Lemma \ref{lemma-4}.
\begin{lem} \label{lemma-3.4}
    Suppose the following expression is valid for $v\in C^2(Q_T)$
	\begin{align}
		\bar{v}_t(0,x)^2 \geq& \frac{1}{2}[p_1\Delta (p_1^2\Delta \hat{f})]^2-4[\Delta(\nabla q \cdot\nabla f_2)]^2-2N(\sum_{i,j=1}^n |q_{ij}|+|\nabla q|+|q|)^2\label{eq:vt0}\\
                &-2N(\sum_{i,j=1}^n |\hat{f}_{ij}|+|\nabla \hat{f}|+|\hat{f}|)^2,\quad x\in \Omega,\nonumber
	\end{align}
    where $f_2, \hat{f}\in C^4(\overline{\Omega})$ and $q=p_1-p_2$, $p_1,p_2\in W(a,M,x_0)$. Suppose condition \eqref{1.901} holds, then we have the inequality
    \begin{equation*}
    \begin{aligned}
        \lambda \beta \int_{G_{c^2+2\sigma}} v_t^2 \mathcal{C}^2 &\geq \frac{\lambda\beta }{8}\int_{G_{c^2+2\sigma}} \bigl|p_1\Delta (p_1^2\Delta \hat{f})\bigr|^2\mathcal{C}^2- N\lambda \beta \int_{Q_\xi} ( \sum_{i,j=1}^n q_{ij}^2 + |\nabla q|^2 + q^2 ) \mathcal{C}^2 \\
        &- \int_{G_{c^2+2\sigma}} w_t^2 \mathcal{C}^2- N\lambda \beta \int_{Q_\xi} ( \sum_{i,j=1}^n \hat{f}_{ij}^2+|\nabla \hat{f}|^2+\hat{f}^2 ) \mathcal{C}^2.
    \end{aligned}
\end{equation*}
where $\mathcal{C}$ and $\beta$ are defined as in \eqref{CWF}.
\end{lem}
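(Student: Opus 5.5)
The plan is the standard Bukhgeim--Klibanov trick of writing the initial value $v_t(0,x)$ as $v_t(t,x)$ minus a time integral of $w_t=v_{tt}$, after which \eqref{eq:vt0} and \eqref{1.901} give a pointwise lower bound. I read $\bar v_t(0,x)$ as $v_t(0,x)=w(0,x)$, the datum in \eqref{1.4}.

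First, for $(t,x)\in G_{c^2+2\sigma}$ write
\[
v_t(0,x)=v_t(t,x)-\int_0^t w_t(\tau,x)\dif\tau .
\]
The inequality $(X+Y)^2\le 2X^2+2Y^2$ then gives
\[
v_t(t,x)^2\ge \tfrac12 v_t(0,x)^2-\Bigl(\int_0^t w_t\dif\tau\Bigr)^2 .
\]
Multiply by $\lambda\beta\,\mathcal C^2$ and integrate over $G_{c^2+2\sigma}$. Because the cross-sections of $G_{c^2+2\sigma}$ in $t$ are intervals $(0,t_{c^2+2\sigma}(x))$, Lemma \ref{lemma-2} applies on that domain with $s=w_t$. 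It yields
\[
\lambda\beta\int_{G_{c^2+2\sigma}}\Bigl(\int_0^t w_t\Bigr)^2\mathcal C^2\le \int_{G_{c^2+2\sigma}} w_t^2\mathcal C^2 ,
\]
which is exactly the $-\int w_t^2\mathcal C^2$ term in the claim.

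Second, bound $v_t(0,x)^2$ from below pointwise using \eqref{eq:vt0}. Condition \eqref{1.901} gives
\[
4|A|^2\le \frac{4}{64a^{16}}|C|^2\le \frac1{16}|C|^2 .
\]
Hence
\[
\tfrac12 v_t(0,x)^2\ge \tfrac12\bigl(\tfrac12-\tfrac1{16}\bigr)C^2-N(\cdots)^2\ge \tfrac18 C^2-N(\cdots)^2 .
\]
Here $C=p_1\Delta(p_1^2\Delta\hat f)$, and $(\cdots)$ stands for the second-order lower terms in $q$ and $\hat f$. The squared sums are expanded by Cauchy--Schwarz into $\sum q_{ij}^2+|\nabla q|^2+q^2$, and likewise for $\hat f$, at the cost of enlarging $N$.

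Third, the lower-order terms are time independent but integrated over $G_{c^2+2\sigma}$. I extend them to $G_{c^2}$ (the integrand is nonnegative) and apply Lemma \ref{lemma-3.1} with $s^2=\sum q_{ij}^2+|\nabla q|^2+q^2$ and its $\hat f$ analogue. This reduces them to integrals over $Q_\xi$ with constant $T/\xi$, absorbed into $N$. The principal term $\frac{\lambda\beta}{8}\int_{G_{c^2+2\sigma}} C^2\mathcal C^2$ is kept as is. Collecting everything gives the stated inequality.

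The only delicate point is the constant bookkeeping in the second step. One must check that the factor $\frac1{8a^8}$ in \eqref{1.901}, after squaring and multiplying by $4$ and by the $\frac12$ from the first step, leaves at least $\frac18$ in front of $C^2$. This works since $a>1$. The rest, namely the use of Lemma \ref{lemma-2} on the subdomain and the passage to $Q_\xi$, is routine. If the pointwise version proves awkward, I would instead integrate \eqref{eq:vt0} with weight $\mathcal C^2$ and use the weighted integral form of \eqref{1.901} from the remark.
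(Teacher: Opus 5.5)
Your proposal is correct and follows the paper's proof essentially step by step. The steps are the same: the splitting $v_t(t,x)=v_t(0,x)+\int_0^t w_t(\tau,x)\dif\tau$, the pointwise lower bound from \eqref{eq:vt0} with the $\Delta(\nabla q\cdot\nabla f_2)$ term absorbed via \eqref{1.901} (leaving $\tfrac{7}{32}\ge\tfrac18$ in front of $|p_1\Delta(p_1^2\Delta\hat f)|^2$), and Lemma \ref{lemma-2} on $G_{c^2+2\sigma}$ for the time-integral term. Your explicit use of Lemma \ref{lemma-3.1}, after enlarging $G_{c^2+2\sigma}$ to $G_{c^2}$, to move the lower-order terms onto $Q_\xi$ is what the paper's phrase ``the relationship between $Q_\xi$ and $G_{c^2+2\sigma}$'' has to mean, since the bare inclusion $Q_\xi\subset G_{c^2+2\sigma}$ gives the inequality in the wrong direction.
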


\begin{proof}
Consider 
\begin{align*}
v_t(t,x) &= v_t(0,x) + \int_0^t v_{tt}(\tau,x) \dif \tau = v_t(0,x) + \int_0^t w_t(\tau,x) \dif \tau,
\end{align*}
hence,
\begin{align*}
v_t^2(t,x) \geq \frac{1}{2} v_t^2(0,x) - \big(\int_0^t w_t(\tau,x) \dif \tau \big)^2.
\end{align*}
Using \eqref{eq:vt0}, we can continue the inequality as follows,
\begin{equation*}
    \begin{aligned}
	v_t^2(t,x) &\geq \frac{1}{4}\bigl|p_1\Delta (p_1^2\Delta \hat{f})\bigr|^2
	-2 \bigl| \Delta(\nabla q \cdot \nabla f_2) \bigr|^2 - N( \sum_{i,j=1}^n q_{ij}^2 + |\nabla q|^2 + q^2 ) \\
    &-N(\sum_{i,j=1}^n \hat{f}_{ij}^2+|\nabla \hat{f}|^2+\hat{f}^2)- ( \int_0^t w_\tau (\tau,x) \dif \tau )^2.
\end{aligned}
\end{equation*}
Using Condition \eqref{1.901}, $\bigl| \Delta(\nabla q \cdot \nabla f_2) \bigr|^2$ can be absorbed,
\begin{equation*}
    \begin{aligned}
	v_t^2(t,x) &\geq \frac{1}{8}\bigl|p_1\Delta (p_1^2\Delta \hat{f})\bigr|^2- N( \sum_{i,j=1}^n q_{ij}^2 + |\nabla q|^2 + q^2 )\\
	&-N(\sum_{i,j=1}^n \hat{f}_{ij}^2+|\nabla \hat{f}|^2+\hat{f}^2)- ( \int_0^t w_\tau (\tau,x) \dif \tau )^2.
\end{aligned}
\end{equation*}
Multiplying both sides of the inequality by $\mathcal{C}^2$ and integrating over $G_{c^2+2\sigma}$, we obtain
\begin{equation*}
    \begin{aligned}
    \int_{G_{c^2+2\sigma}} &v_t^2(t,x)\mathcal{C}^2 \geq \frac{1}{8}\int_{G_{c^2+2\sigma}}\bigl|p_1\Delta (p_1^2\Delta \hat{f})\bigr|^2\mathcal{C}^2
    - \int_{G_{c^2+2\sigma}}( \int_0^t w_\tau (\tau,x) \dif \tau )^2 \mathcal{C}^2\\
    &- N\int_{G_{c^2+2\sigma}}( \sum_{i,j=1}^n q_{ij}^2 + |\nabla q|^2 + |q|^2 )\mathcal{C}^2 -N\int_{G_{c^2+2\sigma}}(\sum_{i,j=1}^n \hat{f}_{ij}^2+|\nabla \hat{f}|^2+|\hat{f}|^2)\mathcal{C}^2.
\end{aligned}
\end{equation*}
Using Lemma \ref{lemma-2}
\begin{align}
\int_{G_{c^2+2\sigma}} ( \int_0^t w_\tau (\tau,x) \dif \tau )^2 \mathcal{C}^2 \dif x \dif t 
\leq \frac{1}{\lambda \beta} \int_{G_{c^2+2\sigma}} w_\tau ^2(\tau,x) \mathcal{C}^2 \dif x \dif t,
\end{align}
according to the relationship between $Q_\xi$ and $G_{c^2+2\sigma }$, we obtain
\begin{equation*}
    \begin{aligned}
        \lambda \beta \int_{G_{c^2+2\sigma}} v_t^2 \mathcal{C}^2 &\geq \frac{\lambda\beta }{8}\int_{G_{c^2+2\sigma}} \bigl|p_1\Delta (p_1^2\Delta \hat{f})\bigr|^2\mathcal{C}^2 - N\lambda \beta \int_{Q_\xi} ( \sum_{i,j=1}^n q_{ij}^2 + |\nabla q|^2 + |q|^2 ) \mathcal{C}^2 \\
        &-  \int_{G_{c^2+2\sigma}} w_t^2 \mathcal{C}^2- N\lambda \beta \int_{Q_\xi} ( \sum_{i,j=1}^n \hat{f}_{ij}^2+|\nabla \hat{f}|^2+|\hat{f}|^2 ) \mathcal{C}^2.
    \end{aligned}
\end{equation*}
The proof is done.
\end{proof}

\section{The proof of Theorem \ref{thm}} \label{sec:proof-SS26}

Introduce the cut-off function $\chi_\sigma(t, x) \in C^3(\overline{Q}_T)$ such that
\begin{equation}\label{1.33}
    \chi_\sigma(t, x) =
    \begin{cases}
        1, & \text{if } (t, x) \in G_{c^2 + 2\sigma}, \\
        0, & \text{if } (t, x) \in Q_T \backslash G_{c^2 + \sigma}, \\
        \text{between } 0 \text{ and } 1, & \text{otherwise}.
    \end{cases}
\end{equation}
Denote $\bar{v}(t, x) = \chi_\sigma(t, x) v(t, x)$, $v$ is the solution to \eqref{1.4}. Then the definition of $\partial_{1} G_{c^2}$ and \eqref{1.33} imply that
\begin{equation} \label{1.34}
    \bar{v}\big|_{\partial_1 G_{c^2}} =\bar{v}_t\big|_{\partial_1 G_{c^2}}=\nabla \bar{v} \big|_{\partial_1 G_{c^2}} = 0.
\end{equation}
By the definition of $Q_\xi$ and \eqref{1.33}, $\chi_\sigma(t, x) = 1$ for $Q_\xi$. Hence, $\bar{v}(0, x) = v(0, x)$ and $\bar{v}_t(0, x) = v_t(0, x)$.
Recall the notations $\mathcal L_p$, $F[p]$ and $\mathcal A(p_1, p_2)$ defined in \eqref{eq:calL} and \eqref{eq:calA}.
Multiplying \eqref{1.4} by the function $\chi_\sigma(t, x)$, we obtain
\begin{equation} \label{1.35}
	\left\{\begin{aligned}
		\partial_{t}^2 \bar{v}=&(\mathcal L_{p_1} + F[p_1])\bar{v} + \chi_\sigma\mathcal{A}(p_1,p_2) + [\chi_\sigma,\mathcal L_{p_1} + F[p_1]] v, && (t,x) \in Q_T,\\
		\bar{v}=& \chi_\sigma\tilde{g},&&(t,x)\in \Sigma_T,\\
		\bar{v}(0,x)=&0,&&x\in \Omega,\\
		\bar{v}_t(0,x)=&(\mathcal L_{p_1} + F[p_1])^2 (f_1p_1)-(\mathcal L_{p_2} + F[p_2])^2 (f_2p_2),&&x\in \Omega,
	\end{aligned}\right.
\end{equation}
where $[\chi_\sigma,\mathcal L_{p_1} + F[p_1]]$ signifies the commutator between $\chi_\sigma$ and $\mathcal L_{p_1} + F[p_1]$.
There holds
\[
[\chi_\sigma,\mathcal L_{p_1} + F[p_1]]
= 2\partial_t \chi_{\sigma}\partial_t + \partial_t^2\chi_{\sigma } - 2p_1\nabla\chi_{\sigma} \nabla + \nabla p_1\nabla \chi_{\sigma}-p_1\Delta \chi_{\sigma}.
\]
Also, denote $\bar{w}=\chi_{\sigma} w(t,x)$, $w=\partial_t v$ is the solution to \eqref{w}. Then we have 
\begin{equation}
    \left\{\begin{aligned}\label{1.39}
        \partial_t^2 \bar{w} =& (\mathcal L_{p_1} + F[p_1]) \bar{w}+\chi_\sigma \partial_t \mathcal{A}(p_1,p_2) + [\chi_\sigma,\mathcal L_{p_1} + F[p_1]] w, && (t,x) \in Q_T,\\
        \bar{w}=&\chi_{\sigma} \tilde{g}_t,  &&(t,x)\in \Sigma_T,\\
        \bar{w}(0,x)=&(\mathcal L_{p_1} + F[p_1])^2 (f_1p_1)-(\mathcal L_{p_2} + F[p_2])^2 (f_2p_2),&&x\in \Omega,\\
        \bar{w}_t(0,x)=&\mathcal{A}(p_1,p_2),&&x\in \Omega.
    \end{aligned}\right.
\end{equation}
To elaborate the relation between the difference $p_1 - p_2$ and $\mathcal{A}(p_1,p_2)$, we express $\mathcal{A}(p_1,p_2)$ in terms of $q := p_1 - p_2$ as 
\begin{equation} \label{mathcalA}
   \mathcal{A}(p_1,p_2)= h^{(2)} \Delta q + \sum_{i=1}^{n} h^{i} q_i + h^{(0)} q,
\end{equation}
We arrange the expansion of $\mathcal D$ in descending order of the differentiation orders with respect to $q$ and $\hat{f}$.
\begin{align}
	\mathcal{D}(p_1,p_2,f_1,f_2)
	:= & \, (\mathcal L_{p_1} + F[p_1])^2 (f_1p_1)-(\mathcal L_{p_2} + F[p_2])^2 (f_2p_2) \nonumber \\
	= & \, p_1\Delta (p_1^2\Delta \hat{f}) +p_1p_2\Delta(\nabla q\cdot\nabla f_2)+\sum_{i,j=1}^{n}\alpha_{ij}(x)q_{ij} \nonumber \\
	& + \sum_{i=1}^{n}\alpha_{(1i)}q_i + \alpha_0 q +\sum_{i,j=1}^{n}\beta_{ij}(x)\hat{f}_{ij}+\sum_{i=1}^{n}\beta_{(1i)} \hat{f}_i+\beta_0 \hat{f}, \label{mathcalD}
\end{align}
where $q_{ij}$, $q_i$, $\hat{f}_{ij}$, and $\hat{f}_i$ denote the corresponding partial derivatives with respect to the $x$-components.
The coefficients $h^{(2)}$, $h^i$, $h^{(0)}$, $\alpha_{ij}$, $\alpha_{(1i)}$, $\alpha_0$, $\beta_{ij}$, $\beta_{(1i)}$, $\beta_0$ are bounded functions, each of which is given by a polynomial combination of derivatives, up to order 2, of $\partial_t^3U_2$, $p_1$, $p_2$, $f_1$ and $f_2$. More precisely, $h^{(2)}\in C^3(\overline{Q_T})$, $h^{i}\in C^2(\overline{Q_T})$, $h^{(0)}\in C^1(\overline{Q_T})$, while the $\alpha$- and $\beta$-coefficients depend only on $x$ and belong to $C^3(\overline{\Omega})$. In view of the assumptions of Theorem \ref{thm} and \eqref{1.2}, all of these coefficients are bounded by a constant $N$. Consequently, the terms involving $p_1,p_2,f_1$ and $f_2$ through these coefficients can be controlled and absorbed into the generic constant $N$, and they do not affect the essential estimates for $q$ and $\hat{f}$.

Because $\partial_t\chi_{\sigma }=\partial_t^2\chi_{\sigma }=\partial_{x_i}\chi_{\sigma }=\partial_{x_ix_j}\chi_{\sigma }=0$ in $G_{c^2+2\sigma}$ by \eqref{1.33}, from \eqref{1.35} we have
\begin{equation}\label{1.43}
    \left\{\begin{aligned}
        |p_1^{-1}\bar{v}_{tt} - \Delta \bar{v} |&\leq N(|\nabla_{t,x} \bar{v}| + |\bar{v}|+|\Delta q|+|\nabla q|+|q|)\\
		&+N\mathbf 1_{G_{c^2+\sigma}\setminus G_{c^2+2\sigma}}(|\nabla_{t,x} v|+|v|), && (t,x) \in Q_T,\\
        \bar{v}&=\chi_{\sigma} \tilde{g}, && (t,x)\in \Sigma_T,\\
        \bar{v}(0,x)&=0, && x\in \Omega,
    \end{aligned}\right.
\end{equation}
where $\bar{v}_t(0,x)$ satisfies
\begin{align*}
	\bar{v}_t(0,x)^2 &\geq \frac{1}{2}[p_1\Delta (p_1^2\Delta \hat{f})]^2-4[\Delta(\nabla q\cdot\nabla f_2)]^2\\
	&- N(\sum_{i,j=1}^n |q_{ij}|+|\nabla q|+|q|)^2 - N(\sum_{i,j=1}^n |\hat{f}_{ij}|+|\nabla \hat{f}|+|\hat{f}|)^2,
\end{align*}
and from \eqref{mathcalA} we have
\begin{equation} \label{1.47}
    \left\{\begin{aligned}
        |p_1^{-1}\bar{w}_{tt}-\Delta \bar{w} |&\leq N(|\nabla_{t,x} \bar{w}|+|\bar{w}|+|\Delta q|+|\nabla q|+|q|)\\
        &+N\mathbf 1_{G_{c^2+\sigma}\setminus G_{c^2+2\sigma}}(|\nabla_{t,x} w|+|w|),&& (t,x) \in Q_T,\\
        \bar{w}&=\chi_{\sigma} \tilde{g}_t,\quad \frac{\partial \bar{w}}{\partial \nu}=\chi _{\sigma}\tilde{h}_t+\frac {\partial\chi_{\sigma}}{\partial \nu}\tilde{g}_t,&&(t,x)\in \Sigma_T,\\
        \bar{w}(0,x)&=\mathcal{D}(p_1,p_2,f_1,f_2),&&x\in \Omega,\\
        \bar{w}_t(0,x)&=\mathcal{A}(p_1,p_2),&&x\in \Omega.
    \end{aligned}\right.
\end{equation}
where $\mathbf 1_{G_{c^2+\sigma}\setminus G_{c^2+2\sigma}}$ is a shell indicator function. Alternatively, isolating the $q$-term in \eqref{1.43}, we obtain
\begin{equation}\label{1.432}
	\begin{aligned}
		\bar{v}_t(0,x)^2 &\geq \frac{1}{2}[p_1p_2\Delta(\nabla q\cdot\nabla f_2)]^2-2[\Delta (p_1^2\Delta \hat{f})]^2-N(\sum_{i,j=1}^n |q_{ij}|+|\nabla q|+|q|)^2\\
                &-N(\sum_{i,j=1}^n |\hat{f}_{ij}|+|\nabla \hat{f}|+|\hat{f}|)^2,\quad x\in \Omega.
	\end{aligned}
\end{equation}

We begin by dealing with \eqref{1.43} and \eqref{1.47} to derive two fundamental inequalities.

\begin{lem}
    Under the conditions of Theorem \ref{thm} and the expansion of \eqref{mathcalD}, $v$ satisfies \eqref{1.43} and $w$ satisfies \eqref{1.47}. $\mathcal{C}$, $\beta $ are defined in \eqref{CWF}. There exists a positive constant $N$ such that
    \begin{align}
        \frac{\beta}{2}\lambda \int_{G_{c^2+2\sigma}} &\bigl|p_1\Delta (p_1^2\Delta \hat{f})\bigr|^2 \mathcal{C}^2 \dif x \dif t+\lambda \int_{G_{c^2+2\sigma}} \bigl(|\nabla_{t,x} v|^2 + \lambda^2 v^2\bigr) \mathcal{C}^2 \dif x \dif t \nonumber \\
        &\leq N\int_{G_{c^2+2\sigma}} w_t^2 \mathcal{C}^2 \dif x \dif t+N\int_{\Omega_0}\lambda(\bigl|\nabla_{x,t}v\bigr|^2+\lambda^2v^2)\mathcal{C}^2 \dif x \nonumber \\
		&+N \int_{\Sigma_T} \bigl( \lambda|\nabla_{t,x} \widetilde{g}|^2 + \lambda^3\widetilde{g}^2 + \widetilde{h}^2\bigr) \mathcal{C}^2 \dif S + N \exp\bigl[2\lambda(c^2 + 2\sigma)\bigr] \|v\|_{H^1(Q_T)}^2 \nonumber \\
        & + N \int_{Q_\xi} \bigl(|\Delta q|^2 + |\nabla q|^2 + q^2\bigr) \mathcal{C}^2 \dif x \dif t\\
		&+ N \int_{Q_\xi} \bigl(\sum_{i,j=1}^n |\hat{f}_{ij}|^2+|\nabla \hat{f}|^2+\hat{f}^2 \bigr) \mathcal{C}^2\dif x \dif t. \label{1.73}
	\end{align}
	and 
	\begin{align}
	    \int_{G_{c^2+2\sigma}} \bigl( \lambda & |\nabla_{t,x} w|^2 + \lambda^3 w^2 \bigr) \mathcal{C}^2 \dif x \dif t \leq  N \int_{\Sigma_T} \bigl( \lambda|\nabla_{t,x} \widetilde{g}_t|^2 +\lambda^3 \widetilde{g}_t^2 + \widetilde{h}_t^2\bigr) \mathcal{C}^2 \dif S \nonumber \\
	    & + N \exp\bigl[2\lambda(c^2 + 2\sigma) \bigr] \|w\|_{H^1(Q_T)}^2 + N \int_{Q_\xi} \bigl(|\Delta q|^2 + |\nabla q|^2 + q^2\bigr) \mathcal{C}^2 \dif x \dif t \nonumber \\
	    &+ N \int_{\Omega_0}(\lambda|\nabla_{t,x} w|^2+\lambda^3 w^2) \mathcal C^2 \dif x. \label{3.22}
	\end{align}
\end{lem}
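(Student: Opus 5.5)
The plan is to derive both inequalities the same way: apply the hyperbolic Carleman estimate (Lemma \ref{lemma-1} in the form of Lemma \ref{lemma-3.3}) to the cut-off functions $\bar v=\chi_\sigma v$ and $\bar w=\chi_\sigma w$ on $G_{c^2}$. We integrate the pointwise inequality with weight $\mathcal C^2$ and then estimate the right-hand side term by term.

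For \eqref{3.22} we start from \eqref{1.47}. Squaring gives
\[
(p_1^{-1}\bar w_{tt}-\Delta\bar w)^2\mathcal C^2\le N\bigl(|\nabla_{t,x}\bar w|^2+\bar w^2+|\Delta q|^2+|\nabla q|^2+q^2\bigr)\mathcal C^2+N\mathbf 1_{G_{c^2+\sigma}\setminus G_{c^2+2\sigma}}(|\nabla_{t,x}w|^2+w^2)\mathcal C^2 .
\]
On the left we use the pointwise Carleman lower bound $C\lambda(|\nabla_{t,x}\bar w|^2+\lambda^2\bar w^2)\mathcal C^2+\nabla\cdot\mathcal U+\partial_tV$. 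Then we integrate over $G_{c^2}$ and apply the divergence theorem.

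\begin{itemize}
\item The flux through $\partial_1G_{c^2}$ vanishes, because $\bar w$ and its first derivatives are zero there by \eqref{1.33}.
\item The flux through $\Sigma_T\cap\partial G_{c^2}$ is bounded by $C\lambda(|\nabla_{t,x}\bar w|^2+\lambda^2\bar w^2)\mathcal C^2$. Using the boundary data in \eqref{1.47} and $|\chi_\sigma|,|\nabla\chi_\sigma|\le N$, this gives $N\int_{\Sigma_T}(\lambda|\nabla_{t,x}\tilde g_t|^2+\lambda^3\tilde g_t^2+\tilde h_t^2)\mathcal C^2$. Tangential derivatives of $\bar w$ come from $\tilde g_t$, and the normal derivative comes from $\tilde h_t$ and $\tilde g_t$.
\item The flux through $\partial_3G_{c^2}=\Omega_0$ is bounded by $N\int_{\Omega_0}(\lambda|\nabla_{t,x}w|^2+\lambda^3w^2)\mathcal C^2$, since $\chi_\sigma=1$ near $t=0$.
\end{itemize}

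Next we choose $\lambda$ large so that the terms $N(|\nabla_{t,x}\bar w|^2+\bar w^2)\mathcal C^2$ on the right are absorbed into the left. The shell term lives in $G_{c^2}\setminus G_{c^2+2\sigma}$. There $\mathcal C^2\le e^{2\lambda(c^2+2\sigma)}$ by \eqref{3.4}, so it is at most $Ne^{2\lambda(c^2+2\sigma)}\|w\|^2_{H^1(Q_T)}$. The $q$-terms are time independent, so Lemma \ref{lemma-3.1} moves them to $Q_\xi$. Finally, the left-hand side only decreases when restricted to $G_{c^2+2\sigma}$, where $\bar w=w$. This yields \eqref{3.22}.

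For \eqref{1.73} we repeat the argument with $\bar v$ and \eqref{1.43}. Now $\Omega_0$ contributes through $\bar v_t(0,\cdot)$, which is why the term $\int_{\Omega_0}\lambda(|\nabla_{t,x}v|^2+\lambda^2v^2)\mathcal C^2$ appears. The resulting estimate controls $\lambda\int_{G_{c^2+2\sigma}}(|\nabla_{t,x}v|^2+\lambda^2v^2)\mathcal C^2$ by the same kinds of terms, with $\tilde g,\tilde h$ in place of $\tilde g_t,\tilde h_t$. To bring in the $\hat f$ term we keep a part of this left-hand side, namely $\lambda\int_{G_{c^2+2\sigma}}v_t^2\mathcal C^2$; since $\beta<1$, keeping the multiple $\lambda\beta\int_{G_{c^2+2\sigma}}v_t^2\mathcal C^2$ costs nothing. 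We then bound it from below by Lemma \ref{lemma-3.4}, whose hypothesis \eqref{eq:vt0} is exactly the lower bound for $\bar v_t(0,x)^2$ stated after \eqref{1.43}, with \eqref{1.901} used to absorb the $A$-term. This produces $\frac{\lambda\beta}{8}\int|p_1\Delta(p_1^2\Delta\hat f)|^2\mathcal C^2$ at the cost of the terms $\int_{G_{c^2+2\sigma}}w_t^2\mathcal C^2$ and the $Q_\xi$ integrals of $q,\hat f$ up to second order. Moving these to the right and rescaling constants gives \eqref{1.73}; the factor $\beta/2$ versus $\beta/8$ is immaterial after adjusting $N$.

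The main obstacle is the bookkeeping of the boundary fluxes. One must check that the $\partial_1G_{c^2}$ contribution really vanishes for the cut-off functions, and that the $\Sigma_T$ flux is controlled by Cauchy data only; the latter needs the normal derivative of $\bar v$ to be expressed through $\tilde h$. One must also make sure the absorption in $\lambda$ is uniform, so that $N$ does not depend on $\lambda$.
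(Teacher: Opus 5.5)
Your route is the paper's route. You apply the pointwise Carleman estimate of Lemma \ref{lemma-1} to $\bar v,\bar w$ on $G_{c^2}$, kill the flux on $\partial_1G_{c^2}$ by \eqref{1.34}, bound the shell term via \eqref{3.4}, move the $q$-terms to $Q_\xi$ with Lemma \ref{lemma-3.1}, and create the $\hat f$-term with Lemma \ref{lemma-3.4}. Your flux bookkeeping on $\Sigma_T$ and $\Omega_0$ just makes explicit what the paper leaves implicit.

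One step does not give what you claim, and the paper's proof makes the same jump: ``moving these to the right and rescaling constants gives \eqref{1.73}''. Lemma \ref{lemma-3.4} returns the lower-order terms as $N\lambda\beta\int_{Q_\xi}(\sum_{i,j}q_{ij}^2+|\nabla q|^2+q^2)\mathcal C^2$ and $N\lambda\beta\int_{Q_\xi}(\sum_{i,j}\hat f_{ij}^2+|\nabla\hat f|^2+\hat f^2)\mathcal C^2$. In this argument the factor $\lambda$ is unavoidable. The pointwise lower bound for $v_t^2$ contains $\frac18|p_1\Delta(p_1^2\Delta\hat f)|^2$ and these lower-order terms with the same weight. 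So keeping $\frac{\beta}{2}\lambda\int|p_1\Delta(p_1^2\Delta\hat f)|^2\mathcal C^2$ on the left forces the same power of $\lambda$ on them. With $N$ independent of $\lambda$, as you rightly require, no rescaling removes it.

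Two smaller mismatches of the same kind:
\begin{itemize}
\item $\sum_{i,j}q_{ij}^2$ is not pointwise bounded by $|\Delta q|^2$, so the $q$-integral in \eqref{1.73} should involve $\sum_{i,j}q_{ij}^2$.
\item The normal-derivative part of the $\Sigma_T$ flux bound is $C\lambda|\partial_\nu\bar v|^2\mathcal C^2$. This yields $\lambda\tilde h^2$ rather than $\tilde h^2$, unless $|\nabla_{t,x}\tilde g|^2$ is read as the full gradient of $v$ on $\Sigma_T$. The same applies to $\tilde h_t$ in \eqref{3.22}.
\end{itemize}

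What your argument actually proves is \eqref{1.73} with the two $Q_\xi$ integrals replaced by $N\lambda\int_{Q_\xi}(\sum_{i,j}q_{ij}^2+|\nabla q|^2+q^2+\sum_{i,j}\hat f_{ij}^2+|\nabla\hat f|^2+\hat f^2)\mathcal C^2$, and you should state it that way. This is also the form the paper effectively uses afterwards; note the factor $N\lambda$ on the $Q_\xi$ integral in \eqref{1v}. Since the left-hand side of \eqref{1.770} carries higher powers of $\lambda$, nothing downstream needs the $\lambda$-free version.
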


\begin{proof}
Multiplying both sides of the first inequality in \eqref{1.43} by $\mathcal{C}$, squaring and integrating over $G_{c^2}$, and applying Lemma \ref{lemma-1} and \eqref{1.34}, we obtain
\begin{align}
    & N \int_{\Sigma_T}\bigl( \lambda|\nabla_{t,x} \tilde{g}|^2+\lambda^3(1+|\partial_\nu\chi_\sigma|^2)\tilde{g}^2+ \tilde{h}^2\bigr) \mathcal{C}^2 dS + N \int_{G_{c^2} \backslash G_{c^2+2\sigma}} \bigl(|\nabla_{t,x} v|^2 + v^2\bigr) \mathcal{C}^2 \dif x \dif t \notag\\
    & \quad + N \int_{G_{c^2}} \bigl(|\Delta q|^2 + |\nabla q|^2 + q^2\bigr) \mathcal{C}^2 \dif x \dif t+N\int_{\Omega_0}\lambda(\bigl|\nabla_{x,t}v\bigr|^2+\lambda^2v^2)\mathcal{C}^2 \dif x  \notag\\
    & \geq C\lambda \int_{G_{c^2+2\sigma}} \bigl(|\nabla_{t,x} v|^2 + \lambda^2 v^2\bigr) \mathcal{C}^2 \dif x \dif t. \notag
\end{align}
Since $\sigma$ in \eqref{sigma} is fixed and $\chi_\sigma\in C^3$, $\|\partial_\nu\chi_\sigma\|_{L^\infty}\le N$, the term $\partial_\nu\chi_\sigma\tilde g$ is absorbed into the $\tilde g$ term
\begin{align}
    & N \int_{\Sigma_T}\bigl( \lambda|\nabla_{t,x} \tilde{g}|^2+\lambda^3\tilde{g}^2+ \tilde{h}^2\bigr) \mathcal{C}^2 dS + N \int_{G_{c^2} \backslash G_{c^2+2\sigma}} \bigl(|\nabla_{t,x} v|^2 + v^2\bigr) \mathcal{C}^2 \dif x \dif t \notag\\
    & \quad + N \int_{G_{c^2}} \bigl(|\Delta q|^2 + |\nabla q|^2 + q^2\bigr) \mathcal{C}^2 \dif x \dif t+N\int_{\Omega_0}\lambda(\bigl|\nabla_{x,t}v\bigr|^2+\lambda^2v^2)\mathcal{C}^2 \dif x  \label{1.52}\\
    & \geq C\lambda \int_{G_{c^2+2\sigma}} \bigl(|\nabla_{t,x} v|^2 + \lambda^2 v^2\bigr) \mathcal{C}^2 \dif x \dif t. \notag
\end{align}
Similarly, from the first inequality in \eqref{1.47} and using Lemma \ref{lemma-1} we obtain
\begin{align} \label{1.53}
& N \int_{\Sigma_T} \bigl(\lambda |\nabla_{t,x} \widetilde{g}_t|^2 + \lambda^3\widetilde{g}_t^2 + \widetilde{h}_t^2\bigr) \mathcal{C}^2 \dif S + N \int_{G_{c^2} \backslash G_{c^2+2\sigma}} \bigl(|\nabla_{t,x} w|^2 + w^2\bigr) \mathcal{C}^2 \dif x \dif t \notag \\
& \quad + N \int_{G_{c^2}} \bigl(|\Delta q|^2 + |\nabla q|^2 + q^2\bigr) \mathcal{C}^2 \dif x \dif t +N\int_{\Omega_0}\lambda(|\nabla_{t,x} w|^2+\lambda^2 w^2) \mathcal C^2 \dif x\\
& \geq C\lambda \int_{G_{c^2+2\sigma}} \bigl(|\nabla_{t,x} w|^2 + \lambda^2 w^2\bigr) \mathcal{C}^2 \dif x \dif t. \notag
\end{align}
Applying Lemma \ref{lemma-3.1} and \eqref{3.4} to the left-hand side of \eqref{1.52}, we obtain
\begin{equation}\label{1.57}
\begin{aligned}
& N \int_{\Sigma_T} \bigl( \lambda|\nabla_{t,x} \widetilde{g}|^2 +\lambda^3\widetilde{g}^2 + \widetilde{h}^2\bigr) \mathcal{C}^2 \dif S
+ N \exp\bigl[2\lambda(c^2 + 2\sigma)\bigr] \|v\|_{H^1(Q_T)}^2\\
&+ N \int_{Q_\xi} \bigl(|\Delta q|^2 + |\nabla q|^2 + q^2\bigr) \mathcal{C}^2 \dif x \dif t+N\int_{\Omega_0}\lambda(\bigl|\nabla_{x,t}v\bigr|^2+\lambda^2v^2)\mathcal{C}^2 \dif x\\
&\geq C\lambda \int_{G_{c^2+2\sigma}} \bigl(|\nabla_{t,x} v|^2 + \lambda^2 v^2\bigr) \mathcal{C}^2 \dif x \dif t.
\end{aligned}
\end{equation}
Applying Lemma \ref{lemma-3.4} to \eqref{1.57}, we can have
\begin{equation*} 
    \begin{aligned}
        \lambda \int_{G_{c^2+2\sigma}} &\bigl|p_1\Delta (p_1^2\Delta \hat{f})\bigr|^2 \mathcal{C}^2 \dif x \dif t +C\lambda \int_{G_{c^2+2\sigma}} \bigl( |\nabla_{t,x} v|^2 + \lambda^2 v^2 \bigr) \mathcal{C}^2 \dif x \dif t\\
        &\leq N\int_{G_{c^2+2\sigma}} w_t^2 \mathcal{C}^2 \dif x \dif t+N\int_{\Omega_0}\lambda(\bigl|\nabla_{x,t}v\bigr|^2+\lambda^2v^2)\mathcal{C}^2 \dif x\\
        &+ N \exp\bigl[2\lambda(c^2 + 2\sigma)\bigr] \|v\|_{H^1(Q_T)}^2 +N \int_{\Sigma_T} (\lambda|\nabla_{t,x} \widetilde{g}|^2 + \lambda^3\widetilde{g}^2 + \widetilde{h}^2) \mathcal{C}^2 \dif S \\
        &+ N \int_{Q_\xi} \bigl(|\Delta q|^2 + |\nabla q|^2 + q^2\bigr) \mathcal{C}^2 \dif x \dif t+ N \int_{Q_\xi} \bigl(\sum_{i,j=1}^n |\hat{f}_{ij}|^2+|\nabla \hat{f}|^2+\hat{f}^2 \bigr) \mathcal{C}^2\dif x \dif t
    \end{aligned}
\end{equation*}
In the same way, from \eqref{1.53} we can obtain \eqref{3.22}.
\end{proof}

Our next goal is to handle the initial boundary term $\int_{\Omega_0}(\lambda|\nabla_{t,x} w|^2+\lambda^3 w^2) \mathcal C^2$ in \eqref{3.22}.

\begin{lem}
    Under the conditions of Theorem \ref{thm}, suppose that $w$ satisfies equation \eqref{w} and that $\bar{w}$ satisfies equation \eqref{1.39}, $\bar{v}$ satisfies \eqref{1.35}. $\mathcal{C}$ is the Carleman weight function defined in \eqref{CWF}. Then there exists a positive constant $N$ such that the following inequalities holds:
    \begin{equation} \label{3.34}
    \begin{aligned}
        &\int_{\Omega_0} (|\nabla_{x,t} w|^2+ \lambda^2 w^2)\mathcal{C}^2 \dif x \leq N\int_{G_{c^2}\backslash G_{c^2+2\sigma}}\lambda (|\nabla_{t,x} w|^2 +\lambda^2w^2) \mathcal{C}^2 \dif x \dif t\\
		&+ N\int_{G_{c^2}}\bigl(|\Delta q|^2 + |\nabla q|^2 + q^2\bigr) \mathcal{C}^2 \dif x \dif t+ N\int_{\Sigma_T} (\lambda|\nabla_{t,x} w|^2+\lambda^3 w^2) \mathcal{C}^2 \dif S,
    \end{aligned}
\end{equation}
and
\begin{equation} \label{3.341}
    \begin{aligned}
        &\int_{\Omega_0} (|\nabla_{x,t} v|^2+ \lambda^2 v^2)\mathcal{C}^2 \dif x \leq N\int_{G_{c^2}\backslash G_{c^2+2\sigma}}\lambda (|\nabla_{t,x} v|^2 +\lambda^2v^2) \mathcal{C}^2 \dif x \dif t\\
		&+ N\int_{G_{c^2}}\bigl(|\Delta q|^2 + |\nabla q|^2 + q^2\bigr) \mathcal{C}^2 \dif x \dif t+ N\int_{\Sigma_T} (\lambda|\nabla_{t,x} v|^2+\lambda^3 v^2) \mathcal{C}^2 \dif S.
    \end{aligned}
\end{equation}
\end{lem}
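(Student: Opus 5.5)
We estimate the traces on $\Omega_0$ by an energy argument run backward in time, applied to the cut-off functions $\bar w=\chi_\sigma w$ and $\bar v=\chi_\sigma v$ rather than to $w$, $v$ themselves. The model is the weighted energy identity from the proof of Lemma \ref{lemma-3.2}. Since $\chi_\sigma=1$ on $Q_\xi\supset\Omega_0$, the traces of $\bar w,\nabla\bar w,\bar w_t$ on $\{t=0\}$ coincide with those of $w$. It therefore suffices to bound $\int_{\Omega_0}(|\nabla_{t,x}\bar w|^2+\lambda^2\bar w^2)\mathcal C^2$.

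We introduce $\tilde w=\mathcal C\bar w$ and the energy $e_{\tilde w}(t)$ of \eqref{e} with $M'=4d^2+2$. Integrating $e'_{\tilde w}$ from $0$ to $T$ gives \eqref{912+}. By the choice of $\beta$ and $T_0<T$ we have $\overline{G_{c^2}}\cap\{t=T\}=\emptyset$, and $\chi_\sigma$ vanishes outside $G_{c^2+\sigma}$. Hence $e_{\tilde w}(T)=0$, and
\[
e_{\tilde w}(0)\le \Bigl|\int_{Q_T}(-\Delta\tilde w+p_1^{-1}\tilde w_{tt}+M'\lambda^2\tilde w)\tilde w_t\Bigr|+\Bigl|\int_{\Sigma_T}\partial_\nu\tilde w\,\tilde w_t\Bigr|.
\]
All integrands are supported in $\overline{G_{c^2}}$. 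To match the operator in \eqref{1.47}, we first divide \eqref{1.39} by $p_1$, writing $(\mathcal L_{p_1}+F[p_1])\bar w=p_1\Delta\bar w+\text{lower order}$. We then use \eqref{1.47} for $p_1^{-1}\bar w_{tt}-\Delta\bar w$ and the commutator formulas for $\mathcal C$ exactly as in Lemma \ref{lemma-3.2}. Young's inequality then bounds the volume integrand by
\[
N\mathcal C^2\bigl(\lambda|\nabla_{t,x}\bar w|^2+\lambda^3\bar w^2+|\Delta q|^2+|\nabla q|^2+q^2\bigr)+N\mathcal C^2\mathbf 1_{G_{c^2}\setminus G_{c^2+2\sigma}}\bigl(|\nabla_{t,x}w|^2+w^2\bigr).
\]
Because the right-hand side of \eqref{1.47} contains $\Delta q$, the argument needs $w_t$ paired against $\Delta q$. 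This pairing is handled by Young's inequality with weight $1$ on $|\Delta q|^2$, producing the unweighted term $\int_{G_{c^2}}(|\Delta q|^2+|\nabla q|^2+q^2)\mathcal C^2$ in \eqref{3.34}. The boundary term over $\Sigma_T$ is bounded by $N\int_{\Sigma_T}(\lambda|\nabla_{t,x}w|^2+\lambda^3w^2)\mathcal C^2$, since $|\chi_\sigma|,|\nabla_{t,x}\chi_\sigma|\le N$.

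The main obstacle is the volume term $\int_{G_{c^2+2\sigma}}(\lambda|\nabla_{t,x}w|^2+\lambda^3w^2)\mathcal C^2$ over the interior region, which does not appear on the right of \eqref{3.34}. We control it with the Carleman estimate Lemma \ref{lemma-3.3}, which is where the condition $w_t(0,\cdot)=0$ enters and removes the bottom boundary term. Lemma \ref{lemma-3.3} bounds the interior $\lambda$-weighted quantity by $N\int_{G_{c^2}}(|\nabla_{t,x}w|^2+w^2)\mathcal C^2$ plus $q$-terms, $\Sigma_T$-terms and $\partial_1G_{c^2}$-terms. The unweighted $w$-terms on $G_{c^2}$ are absorbed into the left for $\lambda$ large. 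The $\partial_1G_{c^2}$-terms are avoided by applying Lemma \ref{lemma-3.3} to $\bar w$, which vanishes near $\partial_1G_{c^2}$ by \eqref{1.34}; this produces commutator contributions only on the shell $G_{c^2}\setminus G_{c^2+2\sigma}$. Collecting everything yields \eqref{3.34}.

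The estimate \eqref{3.341} is proved identically, with $\bar v$ in place of $\bar w$, the bound \eqref{1.43} in place of \eqref{1.47}, and $\tilde g,\tilde h$ in place of $\tilde g_t,\tilde h_t$. One delicate point remains: for $v$ we have $v(0,\cdot)=0$ rather than $v_t(0,\cdot)=0$. In the Carleman step we would therefore rely on the terms of $F^t|_{t=0}$ that carry the factor $\tilde v(0,\cdot)=0$. If the term $\tilde v_t\,(x-x_0)\cdot\nabla\tilde v$ survives, it still vanishes because $\nabla\tilde v(0,\cdot)=0$ whenever $v(0,\cdot)\equiv0$. I expect this bookkeeping of the $t=0$ boundary terms, together with the absorption of the interior Carleman term, to be the crux of the argument.
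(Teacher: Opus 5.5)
Your proposal is correct and follows essentially the same route as the paper. You apply the weighted energy identity of Lemma~\ref{lemma-3.2} to the cut-off function, whose energy vanishes at $t=T$ since $\chi_\sigma(T,\cdot)=0$, and you control the resulting interior volume term by the Carleman estimate of Lemma~\ref{lemma-3.3} applied to $\bar w$ (resp.\ $\bar v$), with the flux through $\{t=0\}$ vanishing because $w_t(0,\cdot)=0$ (resp.\ $v(0,\cdot)=\nabla v(0,\cdot)=0$); the only difference is that the paper derives \eqref{1.60} and \eqref{1.59} separately and then adds them, whereas you substitute the Carleman bound directly into the energy identity.
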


\begin{proof}
Applying Lemma \ref{lemma-3.3} to the system of equations related to \eqref{1.39}, we have
\begin{equation*}
    \begin{aligned}
        &\int_{G_{c^2}} \lambda(|\nabla_{t,x} \bar{w}|^2 + \lambda^2 \bar{w}^2) \mathcal{C}^2 \dif x \dif t \leq N\int_{G_{c^2}} \bigl(|\Delta q|^2 + |\nabla q|^2 + q^2\bigr) \mathcal{C}^2 \dif x \dif t\\
        &+ N\int_{G_{c^2}\backslash G_{c^2+2\sigma}} \bigl(|\nabla_{t,x} w|^2 +w^2\bigr) \mathcal{C}^2 \dif x \dif t+ N\int_{\Sigma_T} (\lambda|\nabla_{t,x} w|^2+\lambda^3 w^2) \mathcal{C}^2 \dif S .
    \end{aligned}
\end{equation*}
By the definition of $\chi_{\sigma}$ and the relation between $G_{c^2+2\sigma}$ and $G_{c^2}$, we have
\begin{equation}\label{1.60}
    \begin{aligned}
        &\int_{G_{c^2+2\sigma}} \lambda(|\nabla_{t,x} w|^2 + \lambda^2 w^2) \mathcal{C}^2 \dif x \dif t\leq N\int_{G_{c^2}} \bigl(|\Delta q|^2 + |\nabla q|^2 + q^2\bigr) \mathcal{C}^2 \dif x \dif t\\
        &+ N\int_{G_{c^2}\backslash G_{c^2+2\sigma}} \bigl(|\nabla_{t,x} w|^2 +w^2\bigr) \mathcal{C}^2 \dif x \dif t+ N\int_{\Sigma_T} (\lambda|\nabla_{t,x} w|^2+\lambda^3 w^2) \mathcal{C}^2 \dif S .
    \end{aligned}
\end{equation}
Applying Lemma \ref{lemma-3.2} to \eqref{1.39}, the term on $\Omega_T$ vanishes since $\chi_\sigma(T,\cdot)=0$ by the choice of $T$ and $\beta$,
\begin{equation}\label{1.59}
    \begin{aligned}
        \int_{\Omega_0}& (|\nabla w|^2 +p_1^{-1} w_t^2 + M'\lambda^2 w^2) \mathcal{C}^2 \dif x \leq N\int_{G_{c^2}} \bigl(\lambda |\nabla_{t,x} w|^2 + \lambda^3 w^2\bigr)\mathcal{C}^2 \dif x \dif t \\
        &+ N\int_{G_{c^2}}\bigl(|\Delta q|^2 + |\nabla q|^2 + q^2\bigr) \mathcal{C}^2 \dif x \dif t+ N\int_{\Sigma_T} \bigl(\lambda^2\widetilde{g}_t^2  + \widetilde{g}_{tt}^2 + \widetilde{h}_t^2\bigr) \mathcal{C}^2 \dif S .
    \end{aligned}
\end{equation}
Adding \eqref{1.60} and \eqref{1.59}, and adjusting the coefficients, we obtain \eqref{3.34}.

Similarly, for the $\bar{v}$-system, \eqref{1.35} gives $\bar{v}(0,\cdot)\equiv0$, so $\nabla\bar{v}(0,\cdot)\equiv0$. Therefore, by applying Proposition \ref{prop-2511}, every term $F^t|_{t=0}$ carries either an explicit factor $t$ or a factor $\bar{v}$ or $\nabla\bar{v}$, so the flux through $\partial_3G_{c^2}$ vanishes and Lemma \ref{lemma-3.3} applies with the same structure.
This gives \eqref{3.341}.
The proof is done.
\end{proof}

Based on this, we present the next lemma.
\begin{lem} \label{lemma-4.3}
    Under the condition of Theorem \ref{thm}, $w\in H^2(Q_T)$ is a solution to \eqref{w} and $\mathcal{C}$, $\beta $ are defined in \eqref{CWF}. There exists a positive constant $N$ such that the following inequality holds:
    \begin{align}
       & \, \lambda\int_{G_{c^2+2\sigma}} \bigl| \Delta(\nabla q \cdot \nabla f_2 ) \bigr|^2 \mathcal{C}^2 +\int_{G_{c^2+2\sigma}} \bigl(|\nabla_{t,x} w|^2 + \lambda^2 w^2\bigr) \mathcal{C}^2 \dif x \dif t \nonumber \\
        \leq & \, N \lambda^3 \exp\bigl[2\lambda(c^2 + 2\sigma)\bigr] \|w\|_{H^1(Q_T)}^2 + N\lambda^3 e^{2\lambda d^2}(\norm{w}^2_{H^{1}(\Sigma_T)}+\norm{\partial_\nu w}^2_{L^{2}(\Sigma_T)}) \nonumber \\
        & + N\lambda\int_{Q_\xi} \bigl(|\Delta q|^2 + |\nabla q|^2 + q^2+|\Delta \hat{f}|^2+|\nabla \hat{f}|^2+\hat{f}^2\bigr) \mathcal{C}^2 \dif x \dif t \nonumber \\
         & + N\lambda\int_{G_{c^2+2\sigma}} \bigl|\Delta (p_1^2\Delta \hat{f})\bigr|^2 \mathcal{C}^2 \dif x \dif t. \label{1.74}
	\end{align}
\end{lem}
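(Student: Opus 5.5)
The plan is to mirror the proof of \eqref{1.73}, but for $w$ instead of $v$, and to isolate the $q$-term in the initial datum by using \eqref{1.432} rather than \eqref{eq:vt0}. The starting point is \eqref{3.22}. In it, the initial term $\int_{\Omega_0}(\lambda|\nabla_{t,x}w|^2+\lambda^3w^2)\mathcal C^2$ appears on the right with a factor $\lambda$ in front of the quantity controlled by \eqref{3.34}. Since $w(0,\cdot)=\bar v_t(0,\cdot)=\mathcal D$, this is exactly the term that carries $\Delta(\nabla q\cdot\nabla f_2)$. So the strategy is not to remove it but to read it as the source of a lower bound. Because the left side of \eqref{1.74} has only $\lambda^0$ on the $w$-energy and $\lambda$ on the $q$-term, we can afford to divide \eqref{3.22} by $\lambda$ where convenient. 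Throughout we use $\lambda\ge1$, and we may replace $w$ by $\bar w$ inside $G_{c^2+2\sigma}$.

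\textbf{Step 1: a pointwise lower bound for $w$.} Write
\[
w(t,x)=w(0,x)+\int_0^t w_\tau\,d\tau, \qquad w(0,x)=\mathcal D .
\]
Then $w^2(t,x)\ge \frac12\mathcal D^2-\bigl(\int_0^t w_\tau\,d\tau\bigr)^2$. By \eqref{1.432}, $\mathcal D^2$ is bounded below by $\frac12[p_1p_2\Delta(\nabla q\cdot\nabla f_2)]^2$ minus $2[\Delta(p_1^2\Delta\hat f)]^2$ and minus the lower-order $q,\hat f$ terms. Since $p_1p_2\ge a^{-2}$, this gives a lower bound $c(a)|\Delta(\nabla q\cdot\nabla f_2)|^2$.

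\textbf{Step 2: integrate and absorb the memory term.} Multiply by $\lambda^2\mathcal C^2$ and integrate over $G_{c^2+2\sigma}$. Lemma \ref{lemma-2} gives
\[
\lambda^2\int\Bigl(\int_0^t w_\tau\Bigr)^2\mathcal C^2\le \frac{\lambda}{\beta}\int_{G_{c^2+2\sigma}}w_t^2\mathcal C^2 .
\]
Lemma \ref{lemma-3.1}, applied to the time-independent terms, moves the $q,\hat f$ lower-order integrals onto $Q_\xi$. For the second-derivative terms $q_{ij},\hat f_{ij}$ we use Lemma \ref{lemma-5} / Lemma \ref{2511-2.5} together with the vanishing boundary traces \eqref{eq:con1}–\eqref{eq:con2}. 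This bounds $\sum|q_{ij}|^2$ by $|\Delta q|^2$ plus lower order, and likewise for $\hat f$, so the weighted integrals match the form on the right of \eqref{1.74}. The result is
\[
c\lambda^2\int_{G_{c^2+2\sigma}}|\Delta(\nabla q\cdot\nabla f_2)|^2\mathcal C^2 \le \lambda^2\int w^2\mathcal C^2+\frac{\lambda}{\beta}\int w_t^2\mathcal C^2+N\lambda^2\int|\Delta(p_1^2\Delta\hat f)|^2\mathcal C^2+(\text{$Q_\xi$ terms}).
\]

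\textbf{Step 3: bound the right-hand side by \eqref{3.22}.} The quantity $\lambda^2 w^2+\lambda w_t^2$, weighted by $\mathcal C^2$, is dominated by \eqref{3.22}. Dividing everything by $\lambda$ yields both the $\lambda\int|\Delta(\nabla q\cdot\nabla f_2)|^2$ term and the $w$-energy on the left of \eqref{1.74}, with each right-hand term carrying at most one power of $\lambda$ beyond those listed. The remaining terms are handled as follows.
\begin{itemize}
\item The $\Omega_0$ term in \eqref{3.22} is estimated by \eqref{3.34}.
\item The shell integrals over $G_{c^2}\setminus G_{c^2+2\sigma}$ are bounded via \eqref{3.4} by $N\lambda^3e^{2\lambda(c^2+2\sigma)}\|w\|^2_{H^1(Q_T)}$.
\item The $\Sigma_T$ integrals are bounded by $N\lambda^3e^{2\lambda d^2}(\|w\|^2_{H^1(\Sigma_T)}+\|\partial_\nu w\|^2_{L^2(\Sigma_T)})$, using $\mathcal C^2\le e^{2\lambda d^2}$.
\item The $G_{c^2}$ integrals of $q$ are moved to $Q_\xi$ by Lemma \ref{lemma-3.1}.
\end{itemize}

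\textbf{Main obstacle.} The difficulty is the circularity with \eqref{3.34}. That estimate bounds the $\Omega_0$ term of $w$, which is essentially $\lambda^2\int\mathcal D^2 e^{2\lambda|x-x_0|^2}$ and itself contains $|\Delta(\nabla q\cdot\nabla f_2)|^2$, by quantities that do not involve $q$'s third derivatives. This is what closes the loop, so we must be careful about the powers of $\lambda$. The first attempt will use \eqref{3.34} as stated, where the $\Omega_0$ term appears without an extra $\lambda$. If the powers do not match, the fallback is to use Lemma \ref{lemma-4} instead: it yields
\[
\int|\Delta(\nabla q\cdot\nabla f_2)|^2\mathcal C^2\ge C\lambda\int\sum q_{ij}^2\mathcal C^2+C\lambda^3\int(|\nabla q|^2+\lambda^2 q^2)\mathcal C^2 ,
\]
which lets the $Q_\xi$ $q$-terms be absorbed for $\lambda$ large, with $\lambda^0$ on the left and the extra factor $\lambda$ placed on the right-hand remainders.
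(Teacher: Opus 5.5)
Your proposal is essentially the paper's proof. The paper likewise inserts $\lambda$ times \eqref{3.34} into \eqref{3.22} and uses \eqref{3.4}, Lemma \ref{lemma-3.1} and $\mathcal C^2\le e^{2\lambda d^2}$ on $\Sigma_T$ to reach \eqref{1.68}. It then reruns the argument of Lemma \ref{lemma-3.4} with \eqref{1.432} in place of \eqref{eq:vt0}, and adds the two inequalities; since $v_t=w$, this is exactly your identity $w=\mathcal D+\int_0^t w_\tau\,d\tau$ combined with Lemma \ref{lemma-2}, only with multiplier $\lambda\beta$ instead of $\lambda^2$.

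The powers of $\lambda$ do close on your main route, so the fallback via Lemma \ref{lemma-4} is not needed. The one step to tighten, which the paper also glosses over, is the conversion of $\sum_{i,j}q_{ij}^2$ and $\sum_{i,j}\hat f_{ij}^2$ into $|\Delta q|^2$ and $|\Delta\hat f|^2$. Under the weight $\mathcal C^2$, integration by parts produces terms like $\lambda^2|\nabla q|^2\mathcal C^2$, and Lemma \ref{lemma-5} does not bound Hessians at all, so it is cleaner to keep the full Hessians on the right-hand side.
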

\begin{proof}
The initial boundary term on the right-hand side of \eqref{3.22} can be treated using \eqref{3.34}, and apply Lemma \ref{lemma-3.1}, we obtain
\begin{equation}\label{1.67}
    \begin{aligned}
        \int_{G_{c^2+2\sigma}} &(\lambda |\nabla_{t,x} w|^2 + \lambda^3 w^2) \mathcal{C}^2 \dif x \dif \tau\leq N\int_{Q_\xi} \lambda\bigl(|\Delta q|^2 + |\nabla q|^2 + q^2\bigr) \mathcal{C}^2 \dif x \dif \tau\\
        &+N\lambda^2\int_{G_{c^2}\backslash G_{c^2+2\sigma}} (|\nabla_{t,x} w|^2 + \lambda^2w^2) \mathcal{C}^2 \dif x \dif \tau \\
		&+ N \exp\bigl[2\lambda(c^2 + 2\sigma)\bigr] \|w\|_{H^1(Q_T)}^2+ N\lambda^2\int_{\Sigma_T} (|\nabla_{t,x} w|^2 + \lambda^2 w^2) \mathcal{C}^2 \dif S \dif \tau.
    \end{aligned}
\end{equation}
By \eqref{3.4}, the second term on the right-hand side of \eqref{1.67} is bounded by $N\lambda^4\exp[2\lambda(c^2+2\sigma)]\norm{w}_{H^1(Q_T)}^2$. Hence, by further simplification, we obtain
\begin{align}
	\int_{G_{c^2+2\sigma}} &(|\nabla_{t,x} w|^2 + \lambda^2 w^2) \mathcal{C}^2 \dif x \dif \tau\leq N\int_{Q_\xi}  \bigl(|\Delta q|^2 + |\nabla q|^2 + q^2\bigr) \mathcal{C}^2 \dif x \dif \tau \nonumber \\
	& + N \lambda^3 \exp\bigl[2\lambda(c^2 + 2\sigma)\bigr] \|w\|_{H^1(Q_T)}^2+ N\int_{\Sigma_T} (\lambda|\nabla_{t,x} w|^2 + \lambda^3 w^2) \mathcal{C}^2 \dif S \dif \tau.\nonumber
\end{align}
In a local orthogonal coordinate system $(t,y',\nu)$,
\[|\nabla_{t,x} w|^2=|\partial_t w|^2+|\nabla_{y'} w|^2+|\partial_\nu w|^2.\]
Using the trace decomposition and $\mathcal{C}^2\leq e^{2\lambda d^2}$ on $\Sigma_T$, we obtain
\begin{equation}\label{1.68} 
	\begin{aligned}
	 &\int_{G_{c^2+2\sigma}} \bigl(|\nabla_{t,x} w|^2 + \lambda^2 w^2\bigr) \mathcal{C}^2 \dif x \dif t \leq  \, N \int_{Q_\xi} \bigl(|\Delta q|^2 + |\nabla q|^2 + q^2\bigr) \mathcal{C}^2 \dif x \dif t \\
     &+ N \lambda^3 \exp\bigl[2\lambda(c^2 + 2\sigma)\bigr] \|w\|_{H^1(Q_T)}^2 + N\lambda^3 e^{2\lambda d^2}(\norm{w}^2_{H^{1}(\Sigma_T)}+\norm{\partial_\nu w}^2_{L^{2}(\Sigma_T)}).
\end{aligned}
\end{equation}
By modifying the conditions of Lemma \ref{lemma-3.4} to \eqref{1.432}, we obtain 
    \begin{equation*}\label{gai1}
    \begin{aligned}
         \frac{\lambda\beta }{4a^4}\int_{G_{c^2+2\sigma}} \bigl| \Delta&(\nabla q \cdot \nabla f_2) \bigr|^2\mathcal{C}^2-\int_{G_{c^2+2\sigma}} w_t^2 \mathcal{C}^2\leq \lambda \beta \int_{G_{c^2+2\sigma}} v_t^2 \mathcal{C}^2 +\lambda \beta \int_{G_{c^2+2\sigma}} \bigl|\Delta (p_1^2\Delta \hat{f})\bigr|^2\mathcal{C}^2\\
		 &+ N\lambda \beta \int_{Q_\xi} ( \sum_{i,j=1}^n q_{ij}^2 + |\nabla q|^2 + q^2 ) \mathcal{C}^2+ N\lambda \beta \int_{Q_\xi} ( \sum_{i,j=1}^n \hat{f}_{ij}^2+|\nabla \hat{f}|^2+\hat{f}^2 ) \mathcal{C}^2.
    \end{aligned}
\end{equation*}
Adding it and \eqref{1.68}, the proof is complete.
\end{proof}

To obtain the final estimates for $f_1-f_2$ and $q$, we present the following important lemma.
\begin{lem}
    Under the conditions of Theorem \ref{thm} and the expansion \eqref{mathcalD}, $v$ satisfies \eqref{1.43} and $w$ satisfies \eqref{1.47}. $\sigma$ satisfies \eqref{sigma}, $d$ is defined in \eqref{d} and $\mathcal{C}$ is the Carleman weight function defined in \eqref{CWF}. There exists a positive constant $N$ such that
    \begin{align}
		\lambda e^{2\lambda(c^2 + 3\sigma)}& (\|\hat{f}\|_{H^1(\Omega)}^2 + \|q\|_{H^2(\Omega)}^2) + e^{2\lambda(c^2 + 3\sigma)} ( \|v\|_{H^1(G_{c^2+3\sigma})}^2 + \|w\|_{H^1(G_{c^2+3\sigma})}^2 ) \nonumber \\
		& \leq N\lambda^3  e^{2\lambda(c^2 + 2\sigma)} (\|v\|_{H^1(Q_T)}^2+\|w\|_{H^1(Q_T)}^2) \label{4.30}\\
		&+N\lambda^3 e^{2\lambda d^2} (\norm{v}^2_{H^{1}(\Sigma_T)}+\norm{\partial_\nu v}^2_{L^{2}(\Sigma_T)}+\norm{w}^2_{H^{1}(\Sigma_T)}+\norm{\partial_\nu w}^2_{L^{2}(\Sigma_T)}).\nonumber
	\end{align}
\end{lem}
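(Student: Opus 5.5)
The plan is to add \eqref{1.73} and \eqref{1.74} in suitable proportions and then absorb the $\hat f$- and $q$-terms on their right-hand sides. The absorption uses the two elliptic Carleman estimates, Lemma \ref{lemma-5} for the Laplacian and Lemma \ref{lemma-4} for the third order operator $u\mapsto\Delta(\nabla u\cdot\nabla f_2)$. The remaining pieces are then converted into unweighted norms via \eqref{3.4}.

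\emph{Step 1: combine the two estimates.} Add \eqref{1.73} and \eqref{1.74}, with \eqref{1.74} multiplied by a small fixed constant $\epsilon$. The term $N\lambda\int|\Delta(p_1^2\Delta\hat f)|^2\mathcal C^2$ on the right of \eqref{1.74} is then dominated by the term $\frac{\beta}{2}\lambda\int|p_1\Delta(p_1^2\Delta\hat f)|^2\mathcal C^2$ on the left of \eqref{1.73}, because $p_1\ge a^{-1}$ and $\epsilon N<\beta/(4a^2)$. The term $N\int_{G_{c^2+2\sigma}}w_t^2\mathcal C^2$ in \eqref{1.73} has no $\lambda$ in front. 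I would bound it with \eqref{3.22}, which controls $\lambda\int_{G_{c^2+2\sigma}}|\nabla_{t,x}w|^2\mathcal C^2$, so it is absorbed once $\lambda$ is large. The initial terms $\int_{\Omega_0}\lambda(|\nabla v|^2+\lambda^2v^2)\mathcal C^2$ and $\int_{\Omega_0}(\ldots w\ldots)\mathcal C^2$ are replaced using \eqref{3.341} and \eqref{3.34}. Their shell integrals over $G_{c^2}\setminus G_{c^2+2\sigma}$ are bounded by $N\lambda^3e^{2\lambda(c^2+2\sigma)}\|\cdot\|^2_{H^1(Q_T)}$ using \eqref{3.4}. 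Their $\Sigma_T$ terms and the $\tilde g,\tilde h$ terms are bounded by $N\lambda^3e^{2\lambda d^2}$ times the lateral norms of $v$ and $w$, via the trace decomposition as in the proof of Lemma \ref{lemma-4.3}.

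\emph{Step 2: lower bounds for the leading terms.} Since $q,\hat f$ do not depend on $t$ and $Q_\xi\subset G_{c^2+2\sigma}$, the left side contains $\lambda\int_{Q_\xi}|\Delta(\nabla q\cdot\nabla f_2)|^2\mathcal C^2$. Integrating Lemma \ref{lemma-4} over $(0,\xi)$ is legitimate because \eqref{eq:con1} gives $q=\partial_\nu q=\partial_\nu^2 q=0$ on $\partial\Omega$, and $\min(x-x_0,\nabla f_2)>0$. This gives a lower bound $C\lambda^2\int_{Q_\xi}(\sum q_{ij}^2+\lambda^2|\nabla q|^2+\lambda^4q^2)\mathcal C^2$. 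For $\hat f$, apply Lemma \ref{lemma-5} twice: once to $u=p_1^2\Delta\hat f$ and once to $\hat f$ itself. Boundary terms vanish by \eqref{eq:con2}, since $\hat f$ and its first three normal derivatives vanish on $\partial\Omega$, so all derivatives of $\hat f$ of order at most three vanish there. This gives
\[
\lambda\int_{Q_\xi}|\Delta(p_1^2\Delta\hat f)|^2\mathcal C^2\ge C\lambda^2\int_{Q_\xi}\bigl(|\nabla(p_1^2\Delta\hat f)|^2+\lambda^2|\Delta\hat f|^2\bigr)\mathcal C^2 .
\]
Using elliptic regularity or a further Lemma \ref{lemma-5} step, the second-derivative terms $\sum\hat f_{ij}^2$ are controlled by $|\Delta\hat f|^2$ plus lower order terms. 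The $Q_\xi$ error terms on the right, namely $N\lambda\int_{Q_\xi}(|\Delta q|^2+|\nabla q|^2+q^2+\sum\hat f_{ij}^2+|\nabla\hat f|^2+\hat f^2)\mathcal C^2$, carry strictly fewer powers of $\lambda$, so they are absorbed for $\lambda\ge\lambda_0$.

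\emph{Step 3: conclude.} After absorption, keep on the left $\lambda\int_{Q_\xi}(\sum q_{ij}^2+|\nabla q|^2+q^2+|\nabla\hat f|^2+\hat f^2)\mathcal C^2$ and $\int_{G_{c^2+3\sigma}}(|\nabla_{t,x}v|^2+v^2+|\nabla_{t,x}w|^2+w^2)\mathcal C^2$. Then use $\mathcal C^2\ge e^{2\lambda(c^2+3\sigma)}$ on $G_{c^2+3\sigma}\supset Q_\xi$ from \eqref{3.4}. The time integral over $(0,\xi)$ contributes only a fixed factor $\xi$, which gives \eqref{4.30}.

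The main obstacle is Step 1. The two estimates have to be balanced so that the cross terms, namely the $\hat f$ term in \eqref{1.74} and the $w_t$ term in \eqref{1.73}, are absorbed without a circular loss in $\lambda$. This is where condition \eqref{1.901}, already used inside Lemma \ref{lemma-3.4}, and the smallness of $\epsilon$ relative to $\beta/a^2$ must be tracked carefully. I would first try fixing $\epsilon=\beta/(8a^2N)$ and only afterwards sending $\lambda\to\infty$.
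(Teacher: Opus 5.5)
Your proposal is correct and follows essentially the paper's route: combine \eqref{1.73} (with its initial term removed via \eqref{3.341}) with \eqref{1.74}, and bound $q$ from below by Lemma \ref{lemma-4}. You bound $\hat f$ by Lemma \ref{lemma-5} applied to $p_1^2\Delta\hat f$ and then to $\hat f$, with boundary terms killed by \eqref{eq:con1}--\eqref{eq:con2}, absorb the lower-order $Q_\xi$ terms for large $\lambda$, and conclude with \eqref{3.4}. Your small weight $\epsilon$ on \eqref{1.74} and the use of \eqref{3.22} for the unweighted $w_t^2$ term make precise what the paper calls ``appropriate scaling''; the only slip is that the $\lambda$-weighted initial term in \eqref{1.73} produces $\lambda^4$ rather than $\lambda^3$ on the right, which is harmless because your left side has a spare power of $\lambda$ and you divide by $\lambda$ at the end, as the paper does.
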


\begin{proof}
	Under condition \eqref{eq:con1}, we can have $q |_{\partial\Omega} = \frac{\partial^k q}{\partial \nu^k} \Big|_{\partial\Omega} = 0$, for $k=1,2$.
	Using Lemma \ref{lemma-4} and the trace condition of $q$ on $\partial \Omega$, from the second-order matching of \eqref{eq:con1}, all second-order derivatives are zero on $\partial\Omega$. Since every term in $W$ contains $u, u_i, u_{ij}$, it follows that $\int_{\partial\Omega} W \cdot \nu \dif S = 0 .$ We can estimate $|\Delta(\nabla q \cdot \nabla f_2)|^2$ as follows,
	\begin{align*}
		N\lambda \int_{\Omega}(\sum_{i,j=1}^n q_{ij}^2  + \lambda^2|\nabla q|^2 + \lambda^4 q^2) \mathcal{C}^2 & \leq \int_{\Omega}[\Delta(\nabla q \cdot \nabla f_2)]^2 \mathcal{C}^2.
	\end{align*}
	Integrating these two inequalities with respect to the time variable in $(0,\xi)$ gives
	\begin{align}
		N\lambda \int_{Q_{\xi}}(\sum_{i,j=1}^n q_{ij}^2  + \lambda^2|\nabla q|^2 + \lambda^4 q^2)\mathcal{C}^2 & \leq \int_{Q_{\xi}}[\Delta(\nabla q \cdot \nabla f_2)]^2 \mathcal{C}^2. \label{gj3} 
	\end{align}
	Combining \eqref{3.341} and \eqref{1.73}, we obtain
	\begin{equation}\label{1v}
		\begin{aligned}
			& \lambda \int_{G_{c^2+2\sigma}} \bigl|\Delta (p_1^2\Delta \hat{f})\bigr|^2\mathcal{C}^2 \dif x \dif t + \lambda \int_{G_{c^2+2\sigma}} \bigl(|\nabla_{t,x} v|^2 + \lambda^2 v^2  \bigr) \mathcal{C}^2 \dif x \dif t\\
		 &\leq N\lambda^4  \exp\bigl[2\lambda(c^2 + 2\sigma)\bigr] \|v\|_{H^1(Q_T)}^2+ N\lambda^2\int_{\Sigma_T}(\bigl|\nabla_{x,t} v\bigr|^2+\lambda^2v^2)\mathcal{C}^2 \\
		& + N\int_{G_{c^2+2\sigma}}w_t^2\mathcal{C}^2+N \lambda\int_{Q_\xi} (|\Delta q|^2 + |\nabla q|^2 + q^2 + |\Delta \hat{f}|^2 + |\nabla \hat{f}|^2 + \hat{f}^2) \mathcal{C}^2 \dif x \dif t.
		\end{aligned}
	\end{equation}
	Adding \eqref{1.74} of Lemma \ref{lemma-4.3} and \eqref{1v} with appropriate scaling, we obtain
	\begin{equation}\label{1}
		\begin{aligned}
		& \lambda \int_{Q_\xi} (\bigl|\Delta (p_1^2\Delta \hat{f})\bigr|^2+ \bigl| \Delta(\nabla q \cdot  \nabla f_2) \bigr|^2) \mathcal{C}^2 \dif x \dif t \\
		&+ \lambda \int_{G_{c^2+2\sigma}} \bigl(|\nabla_{t,x} v|^2 + \lambda^2 v^2+ |\nabla_{t,x} w|^2 + \lambda^2 w^2 \bigr) \mathcal{C}^2 \dif x \dif t\\
		& \leq N\lambda^4  \exp\bigl[2\lambda(c^2 + 2\sigma)\bigr] (\|v\|_{H^1(Q_T)}^2+\|w\|_{H^1(Q_T)}^2)\\
		&+ N\lambda^4 e^{2\lambda d^2}(\norm{v}^2_{H^{1}(\Sigma_T)} +\norm{\partial_\nu v}^2_{L^{2}(\Sigma_T)} +\norm{w}^2_{H^{1}(\Sigma_T)}+\norm{\partial_\nu w}^2_{L^{2}(\Sigma_T)})\\
		& + N \lambda\int_{Q_\xi} (|\Delta q|^2 + |\nabla q|^2 + q^2 + |\Delta \hat{f}|^2 + |\nabla \hat{f}|^2 + \hat{f}^2) \mathcal{C}^2 \dif x \dif t, 
	\end{aligned}
	\end{equation}
	where we have used the relation between $Q_{\xi}$ and $G_{c^2+2\sigma}$. Apply Lemma \ref{lemma-5} and condition \eqref{eq:con2} to $\int_{Q_\xi} \bigl|\Delta (p_1^2\Delta \hat{f})\bigr|^2\mathcal{C}^2$ yields
	\begin{equation}\label{guji1}
			\lambda \int_{Q_\xi} \bigl|\Delta (p_1^2\Delta \hat{f})\bigr|^2\mathcal{C}^2 \dif x \dif t \geq N\lambda^2 \int_{Q_\xi} \bigl|\nabla(p_1^2\Delta \hat{f})\bigr|^2\mathcal{C}^2\dif x \dif t+N\lambda^4 \int_{Q_\xi} \bigl|p_1^2\Delta \hat{f}\bigr|^2 \mathcal{C}^2 \dif x \dif t.
	\end{equation}
	This step uses the boundary matching up to order three of \eqref{eq:con2}, $\hat{f}|_{\partial \Omega}=\partial_\nu \hat{f}|_{\partial \Omega}=\partial_\nu^2 \hat{f}|_{\partial \Omega}=\partial_\nu^3 \hat{f}|_{\partial \Omega}=0.$ In this case, applying \eqref{guji1} to the left-hand side of \eqref{1} gives
	\begin{align}
		& \lambda^2 \int_{Q_\xi} \bigl|\nabla(p_1^2\Delta \hat{f})\bigr|^2\mathcal{C}^2\dif x \dif t+\lambda^4 \int_{Q_\xi} \bigl|p_1^2\Delta \hat{f}\bigr|^2 \mathcal{C}^2 \dif x \dif t+\lambda\int_{Q_\xi} \bigl| \Delta(\nabla q \cdot  \nabla f_2) \bigr|^2 \mathcal{C}^2 \dif x \dif t\nonumber \\
		&+ \lambda \int_{G_{c^2+2\sigma}} \bigl(|\nabla_{t,x} v|^2 + \lambda^2 v^2 + |\nabla_{t,x} w|^2 + \lambda^2 w^2 \bigr) \mathcal{C}^2 \dif x \dif t \nonumber \\
		&\leq N\lambda^4  \exp\bigl[2\lambda(c^2 + 2\sigma)\bigr] (\|v\|_{H^1(Q_T)}^2+\|w\|_{H^1(Q_T)}^2)\label{2} \\
		&+ N\lambda^4 e^{2\lambda d^2}(\norm{w}^2_{H^{1}(\Sigma_T)}+\norm{\partial_\nu w}^2_{L^{2}(\Sigma_T)}+\norm{v}^2_{H^{1}(\Sigma_T)}+\norm{\partial_\nu v}^2_{L^{2}(\Sigma_T)}) \nonumber \\
		& + N \lambda\int_{Q_\xi} (|\Delta q|^2 + |\nabla q|^2 + q^2 + |\Delta \hat{f}|^2 + |\nabla \hat{f}|^2 + \hat{f}^2) \mathcal{C}^2 \dif x \dif t. \nonumber
	\end{align}
	We use Lemma \ref{lemma-5} to analyze the first term on the left-hand side of \eqref{2}.
	\begin{equation*}
		\begin{aligned}
			\lambda^2 \int_{Q_\xi} \bigl|\nabla(p_1^2\Delta \hat{f})\bigr|^2\mathcal{C}^2\dif x \dif t&=\lambda^2 \int_{Q_\xi}(2p_1\nabla p_1\Delta \hat{f}+p_1^2\nabla \Delta \hat{f})^2\mathcal{C}^2\dif x \dif t\\
          			&\geq N\lambda^2\int_{Q_\xi} |\nabla \Delta \hat{f}|^2\mathcal{C}^2\dif x \dif t-\lambda^2 \int_{Q_\xi}|\nabla p_1|^2|\Delta \hat{f}|^2\mathcal{C}^2\dif x \dif t\\
					&\geq N\lambda^2\int_{Q_\xi} |\Delta \nabla  \hat{f}|^2\mathcal{C}^2\dif x \dif t-N\lambda^2\int_{Q_\xi}|\Delta \hat{f}|^2\mathcal{C}^2\dif x \dif t.
		\end{aligned}
	\end{equation*}
	Using this inequality, \eqref{2} can be written as
    \begin{align}
		& N\lambda^2 \int_{Q_\xi} (|\nabla \Delta \hat{f}|^2+\lambda^2|\Delta \hat{f}|^2 )\mathcal{C}^2 \dif x \dif t +\lambda\int_{Q_\xi} \bigl| \Delta(\nabla q \cdot  \nabla f_2) \bigr|^2 \mathcal{C}^2 \dif x \dif t\nonumber \\
		& + C\lambda \int_{G_{c^2+2\sigma}} \bigl(|\nabla_{t,x} v|^2 + \lambda^2 v^2 + |\nabla_{t,x} w|^2 + \lambda^2 w^2 \bigr) \mathcal{C}^2 \dif x \dif t \nonumber \\
		&\leq N\lambda^4  \exp\bigl[2\lambda(c^2 + 2\sigma)\bigr] (\|v\|_{H^1(Q_T)}^2+\|w\|_{H^1(Q_T)}^2) \nonumber\\
		&+ N\lambda^4 e^{2\lambda d^2}(\norm{w}^2_{H^{1}(\Sigma_T)}+\norm{\partial_\nu w}^2_{L^{2}(\Sigma_T)}+\norm{v}^2_{H^{1}(\Sigma_T)}+\norm{\partial_\nu v}^2_{L^{2}(\Sigma_T)}) \nonumber \\
		& + N \lambda\int_{Q_\xi} (|\Delta q|^2 + |\nabla q|^2 + q^2 + |\Delta \hat{f}|^2 + |\nabla \hat{f}|^2 + \hat{f}^2) \mathcal{C}^2 \dif x \dif t,
	\end{align}
	Combining this with \eqref{gj3}, we have
	\begin{align}
		& N\lambda^2 \int_{Q_\xi} (|\nabla \Delta \hat{f}|^2+\lambda^2|\Delta \hat{f}|^2 +\sum_{i,j=1}^n q_{ij}^2+\lambda^2|\nabla q|^2 + \lambda^4 q^2)\mathcal{C}^2 \dif x \dif t\nonumber \\
		& + C\lambda \int_{G_{c^2+2\sigma}} \bigl(|\nabla_{t,x} v|^2 + \lambda^2 v^2 + |\nabla_{t,x} w|^2 + \lambda^2 w^2 \bigr) \mathcal{C}^2 \dif x \dif t \nonumber \\
		&\leq  N\lambda^4  \exp\bigl[2\lambda(c^2 + 2\sigma)\bigr] (\|v\|_{H^1(Q_T)}^2+\|w\|_{H^1(Q_T)}^2) \label{1.77}\\
		&+ N\lambda^4 e^{2\lambda d^2}(\norm{w}^2_{H^{1}(\Sigma_T)}+\norm{\partial_\nu w}^2_{L^{2}(\Sigma_T)}+\norm{v}^2_{H^{1}(\Sigma_T)} +\norm{\partial_\nu v}^2_{L^{2}(\Sigma_T)}) \nonumber \\
		& + N \lambda\int_{Q_\xi} (|\Delta q|^2 + |\nabla q|^2 + q^2 + |\Delta \hat{f}|^2 + |\nabla \hat{f}|^2 + \hat{f}^2) \mathcal{C}^2 \dif x \dif t.\nonumber
	\end{align}
	We continue by applying Lemma \ref{lemma-5} to each $\partial_i \hat{f}$ component and summing
	\begin{equation*}
		\begin{aligned}
			N\lambda^2 \int_{Q_\xi} (|\nabla \Delta  \hat{f}|^2+\lambda^2|\Delta \hat{f}|^2)\mathcal{C}^2\dif x \dif t&= N\lambda^2\int_{Q_\xi} (|\Delta \nabla  \hat{f}|^2+\lambda^2|\Delta \hat{f}|^2)\mathcal{C}^2\dif x \dif t\\
			&\geq N\lambda^3\int_{Q_\xi} (|\Delta \hat{f}|^2+\lambda^2|\nabla \hat{f}|^2)\mathcal{C}^2\dif x \dif t.
		\end{aligned}
	\end{equation*}
	Using this inequality, \eqref{1.77} can be written as
    \begin{align}
		& N\lambda^2 \int_{Q_\xi} (\lambda|\Delta \hat{f}|^2 +\lambda^3|\nabla \hat{f}|^2+\sum_{i,j=1}^n q_{ij}^2+\lambda^2|\nabla q|^2 + \lambda^4 q^2) \mathcal{C}^2 \dif x \dif t \nonumber \\
		& + C\lambda \int_{G_{c^2+2\sigma}} \bigl(|\nabla_{t,x} v|^2 + \lambda^2 v^2 + |\nabla_{t,x} w|^2 + \lambda^2 w^2 \bigr) \mathcal{C}^2 \dif x \dif t \nonumber \\
		&\leq   N\lambda^4  \exp\bigl[2\lambda(c^2 + 2\sigma)\bigr] (\|v\|_{H^1(Q_T)}^2+\|w\|_{H^1(Q_T)}^2) \label{1.770} \\
		&+ N\lambda^4 e^{2\lambda d^2}(\norm{w}^2_{H^{1}(\Sigma_T)}+\norm{\partial_\nu w}^2_{L^{2}(\Sigma_T)}+\norm{v}^2_{H^{1}(\Sigma_T)} +\norm{\partial_\nu v}^2_{L^{2}(\Sigma_T)}) \nonumber \\
		& + N \lambda\int_{Q_\xi} (|\Delta q|^2 + |\nabla q|^2 + q^2 + |\Delta \hat{f}|^2 + |\nabla \hat{f}|^2 + \hat{f}^2) \mathcal{C}^2 \dif x \dif t. \nonumber
	\end{align}
	In \eqref{1.770}, for $\lambda \geq \lambda_1(\Omega, x_0, \beta, N)$, the last term (the integral on $Q_\xi$) on the right-hand side can be absorbed by the left-hand side, thus we obtain 
	\begin{align}
		& N\lambda^2 \int_{Q_\xi} (\lambda|\Delta \hat{f}|^2 +\lambda^3|\nabla \hat{f}|^2+\sum_{i,j=1}^n q_{ij}^2+\lambda^2|\nabla q|^2 + \lambda^4 q^2) \mathcal{C}^2 \dif x \dif t \nonumber \\
		& + C\lambda \int_{G_{c^2+2\sigma}} \bigl(|\nabla_{t,x} v|^2 + \lambda^2 v^2 + |\nabla_{t,x} w|^2 + \lambda^2 w^2 \bigr) \mathcal{C}^2 \dif x \dif t\nonumber\\
		& \leq N \lambda\int_{Q_\xi} \hat{f}^2 \mathcal{C}^2 \dif x \dif t +N\lambda^4  \exp\bigl[2\lambda(c^2 + 2\sigma)\bigr] (\|v\|_{H^1(Q_T)}^2+\|w\|_{H^1(Q_T)}^2)\nonumber\\
		&+ N\lambda^4 e^{2\lambda d^2}(\norm{w}^2_{H^{1}(\Sigma_T)}+\norm{\partial_\nu w}^2_{L^{2}(\Sigma_T)}+\norm{v}^2_{H^{1}(\Sigma_T)}+\norm{\partial_\nu v}^2_{L^{2}(\Sigma_T)}). \nonumber
	\end{align}
	Applying Lemma \ref{lemma-5} again, we have
    \begin{align}
		& N\lambda^2 \int_{Q_\xi} (\lambda^3|\nabla \hat{f}|^2+\lambda^4 \hat{f}^2+\sum_{i,j=1}^n q_{ij}^2+\lambda^2|\nabla q|^2 + \lambda^4 q^2) \mathcal{C}^2 \dif x \dif t \nonumber \\
		& + C\lambda \int_{G_{c^2+2\sigma}} \bigl(|\nabla_{t,x} v|^2 + \lambda^2 v^2 + |\nabla_{t,x} w|^2 + \lambda^2 w^2 \bigr) \mathcal{C}^2 \dif x \dif t \label{1.771} \\
		&\leq N\lambda^4  \exp\bigl[2\lambda(c^2 + 2\sigma)\bigr] (\|v\|_{H^1(Q_T)}^2+\|w\|_{H^1(Q_T)}^2) \nonumber\\
		&+ N\lambda^4 e^{2\lambda d^2}(\norm{w}^2_{H^{1}(\Sigma_T)}+\norm{\partial_\nu w}^2_{L^{2}(\Sigma_T)}+\norm{v}^2_{H^{1}(\Sigma_T)}+\norm{\partial_\nu v}^2_{L^{2}(\Sigma_T)}). \nonumber
	\end{align}
	Since $Q_\xi \subset G_{c^2+3\sigma} \subset G_{c^2+2\sigma}$ and $\mathcal{C}^2(t,x) \geq e^{2\lambda(c^2 + 3\sigma)}$ in $G_{c^2+3\sigma}$, \eqref{1.771} implies that
	\begin{align*}
		& \lambda^2 e^{2\lambda(c^2 + 3\sigma)} (\|\hat{f}\|_{H^1(\Omega)}^2 + \|q\|_{H^2(\Omega)}^2) + \lambda e^{2\lambda(c^2 + 3\sigma)} ( \|v\|_{H^1(G_{c^2+3\sigma})}^2 + \|w\|_{H^1(G_{c^2+3\sigma})}^2 ) \nonumber \\
		&\leq  N\lambda^4  e^{2\lambda(c^2 + 2\sigma)} (\|v\|_{H^1(Q_T)}^2+\|w\|_{H^1(Q_T)}^2)\nonumber\\
		&+N\lambda^4 e^{2\lambda d^2} (\norm{w}^2_{H^{1}(\Sigma_T)}+\norm{\partial_\nu w}^2_{L^{2}(\Sigma_T)}+\norm{v}^2_{H^{1}(\Sigma_T)}+\norm{\partial_\nu v}^2_{L^{2}(\Sigma_T)}).
	\end{align*}
	The proof is done.
\end{proof}

We handle $\hat{f}$ and $q$ separately.
Dividing \eqref{4.30} by $\lambda e^{2\lambda(c^2 + 3\sigma)}$ and ignoring $\|v\|_{H^1( G_{c^2+3\sigma})}^2$ and $\|w\|_{H^1 (G_{c^2+3\sigma})}^2$, we obtain
\begin{align}
		\|&\hat{f}\|_{H^1(\Omega)}^2 + \|q\|_{H^2(\Omega)}^2 \leq N \lambda^2 e^{-2\lambda\sigma} (\|v\|_{H^1(Q_T)}^2+\|w\|_{H^1(Q_T)}^2) \nonumber \\
		& +N \lambda^2 e^{2\lambda(d^2-c^2 - 3\sigma)} (\norm{v}^2_{H^{1}(\Sigma_T)}+\norm{\partial_\nu v}^2_{L^{2}(\Sigma_T)}+\norm{w}^2_{H^{1}(\Sigma_T)}+\norm{\partial_\nu w}^2_{L^{2}(\Sigma_T)}). \label{1.82}
\end{align}
Similarly, dividing \eqref{4.30} by $e^{2\lambda(c^2 + 3\sigma)}$ and ignoring $\|\hat{f}\|_{H^1(\Omega)}^2$ and $\|q\|_{H^2(\Omega)}^2$ gives
\begin{align}
		\|&v\|_{H^1(G_{c^2+3\sigma})}^2 +  \|w\|_{H^1(G_{c^2+3\sigma})}^2 \leq N\lambda^3  e^{-2\lambda\sigma} (\|v\|_{H^1(Q_T)}^2+\|w\|_{H^1(Q_T)}^2) \nonumber \\
		& +N \lambda^3 e^{2\lambda(d^2-c^2 - 3\sigma)} (\norm{v}^2_{H^{1}(\Sigma_T)}+\norm{\partial_\nu v}^2_{L^{2}(\Sigma_T)}+\norm{w}^2_{H^{1}(\Sigma_T)}+\norm{\partial_\nu w}^2_{L^{2}(\Sigma_T)}). \label{1.81}
\end{align}
We can replace the norms of the space $H^1(G_{c^2+3\sigma})$ on the left hand side of \eqref{1.81} with the norms in the space $H^1(Q_\xi)$. We obtain
\begin{align}
	\|v\|_{H^1(Q_\xi)}^2 &+ \|w\|_{H^1(Q_\xi)}^2\leq N\lambda^3  e^{-2\lambda\sigma} (\|v\|_{H^1(Q_T)}^2+\|w\|_{H^1(Q_T)}^2)  \label{1.83}\\
	& +N \lambda^3 e^{2\lambda(d^2-c^2 - 3\sigma)} (\norm{v}^2_{H^{1}(\Sigma_T)}+\norm{\partial_\nu v}^2_{L^{2}(\Sigma_T)}+\norm{w}^2_{H^{1}(\Sigma_T)}+\norm{\partial_\nu w}^2_{L^{2}(\Sigma_T)}). \nonumber
\end{align}
On the other hand, there exists a number $t_1 \in (0,\xi)$ such that
\begin{multline*}
\|v(\cdot,t_1)\|_{H^1(\Omega)}^2 + \|v_t(\cdot,t_1)\|_{L^2(\Omega)}^2 + \|w(\cdot,t_1)\|_{H^1(\Omega)}^2 + \|w_t(\cdot,t_1)\|_{L^2(\Omega)}^2 \\
\leq \frac{1}{\xi} ( \|v\|_{H^1(Q_\xi)}^2 + \|w\|_{H^1(Q_\xi)}^2 ).
\end{multline*}
Hence \eqref{1.83} implies that
\begin{align}
	\|&v(\cdot,t_1)\|_{H^1(\Omega)}^2 + \|v_t(\cdot,t_1)\|_{L^2(\Omega)}^2 \nonumber \\
		&\leq N\frac{\lambda^3}{\xi}  e^{-2\lambda\sigma} (\|v\|_{H^1(Q_T)}^2+\|w\|_{H^1(Q_T)}^2) \nonumber \\
	& +N \frac{\lambda^3}{\xi} e^{2\lambda(d^2-c^2 - 3\sigma)} (\norm{v}^2_{H^{1}(\Sigma_T)}+\norm{\partial_\nu v}^2_{L^{2}(\Sigma_T)}+\norm{w}^2_{H^{1}(\Sigma_T)}+\norm{\partial_\nu w}^2_{L^{2}(\Sigma_T)}), \label{1.85}
\end{align}
and
\begin{align}
	\|&w(\cdot,t_1)\|_{H^1(\Omega)}^2 + \|w_t(\cdot,t_1)\|_{L^2(\Omega)}^2\leq N\frac{\lambda^3}{\xi}  e^{-2\lambda\sigma} (\|v\|_{H^1(Q_T)}^2+\|w\|_{H^1(Q_T)}^2) \nonumber \\
	& +N \frac{\lambda^3}{\xi} e^{2\lambda(d^2-c^2 - 3\sigma)} (\norm{v}^2_{H^{1}(\Sigma_T)}+\norm{\partial_\nu v}^2_{L^{2}(\Sigma_T)}+\norm{w}^2_{H^{1}(\Sigma_T)}+\norm{\partial_\nu w}^2_{L^{2}(\Sigma_T)}). \nonumber
\end{align}
 
The estimate obtained so far is localized in a very flat region near $t=0$. To obtain our target estimate, the following treatment is required.
Consider now the hyperbolic equation \eqref{1.4} for the function $v(t,x)$ in the cylinder $(t_1, T) \times \Omega$ with the boundary conditions
\[
v |_{(t_1,T) \times \partial\Omega} = \widetilde{g}(t,x),\quad \partial_\nu v|_{(t_1,T)\times\partial\Omega}=\widetilde h(t,x) 
\]
and with the initial conditions
\[
v |_{t=t_1} = v(t_1,x),\quad v_t|_{t=t_1}=v_t(t_1,x).
\]
Then, by performing an energy estimate and applying Lemma \ref{lemma-3} and \eqref{1.85}, we obtain
\begin{align}
\|v\|_{H^1((t_1,T)\times\Omega)}^2&\leq N\frac{\lambda^3}{\xi}  e^{-2\lambda\sigma} (\|v\|_{H^1(Q_T)}^2+\|w\|_{H^1(Q_T)}^2) \nonumber \\
&+N \frac{\lambda^3}{\xi} e^{2\lambda(d^2-c^2 - 3\sigma)} (\norm{v}^2_{H^{1}(\Sigma_T)}+\norm{\partial_\nu v}^2_{L^{2}(\Sigma_T)}+\norm{w}^2_{H^{1}(\Sigma_T)}+\norm{\partial_\nu w}^2_{L^{2}(\Sigma_T)}) \nonumber\\
&+ N\norm{\mathcal{A}(p_1,p_2)}^2_{L^2((t_1,T)\times\Omega)}. \label{1.87}
\end{align}

Make a change of variables $(t,x) \Leftrightarrow (\tau = t_1 - t,x)$. Then we obtain the same hyperbolic equation \eqref{1.4}, in which $v(\tau,x) := v(t_1 - \tau,x)$ and $t$ is replaced with $t_1 - \tau$ in all coefficients. This reflects the fact that the hyperbolic equation can be solved in both the positive and negative directions of time. This new hyperbolic equation is satisfied in the cylinder $(0,t_1) \times \Omega$ with the boundary conditions
\[
v|_{(0,t_1) \times \partial \Omega} = \widetilde{g}(t_1 - \tau, x), \quad \frac{\partial v}{\partial \nu} \Big|_{(0,t_1) \times \partial \Omega} = \widetilde{h}(t_1 - \tau, x), 
\]
and with the initial conditions
\[
v|_{\tau=0} = v(t_1, x), \quad v_\tau |_{\tau=0} = -v_t(t_1, x).
\]

Therefore using again Lemma \ref{lemma-3} and \eqref{1.85}, we obtain
\begin{align}
\|v\|_{H^1((0,t_1)\times\Omega)}^2 &\leq N\frac{\lambda^3}{\xi}  e^{-2\lambda\sigma} (\|v\|_{H^1(Q_T)}^2+\|w\|_{H^1(Q_T)}^2) \nonumber \\
& +N \frac{\lambda^3}{\xi} e^{2\lambda(d^2-c^2 - 3\sigma)} (\norm{v}^2_{H^{1}(\Sigma_T)}+\norm{\partial_\nu v}^2_{L^{2}(\Sigma_T)}+\norm{w}^2_{H^{1}(\Sigma_T)}+\norm{\partial_\nu w}^2_{L^{2}(\Sigma_T)}) \nonumber\\
& + N\norm{\mathcal{A}(p_1,p_2)}^2_{L^2((0,t_1)\times\Omega)}.
\end{align}

Summing up this estimate with \eqref{1.87} we obtain
\begin{align}
	\|v\|_{H^1(Q_T)}^2 \leq &N\frac{\lambda^3}{\xi}  e^{-2\lambda\sigma} (\|v\|_{H^1(Q_T)}^2+\|w\|_{H^1(Q_T)}^2) \nonumber \\
	& +N \frac{\lambda^3}{\xi} e^{2\lambda(d^2-c^2 - 3\sigma)} (\norm{v}^2_{H^{1}(\Sigma_T)}+\norm{\partial_\nu v}^2_{L^{2}(\Sigma_T)}+\norm{w}^2_{H^{1}(\Sigma_T)}+\norm{\partial_\nu w}^2_{L^{2}(\Sigma_T)})\nonumber\\
	& + N\norm{\mathcal{A}(p_1,p_2)}^2_{L^2(Q_T)},
\end{align}
similarly,
\begin{align}
	\|w\|_{H^1(Q_T)}^2 & \leq N\frac{\lambda^3}{\xi}  e^{-2\lambda\sigma} (\|v\|_{H^1(Q_T)}^2+\|w\|_{H^1(Q_T)}^2) \nonumber \\
	& +N \frac{\lambda^3}{\xi} e^{2\lambda(d^2-c^2 - 3\sigma)} (\norm{v}^2_{H^{1}(\Sigma_T)}+\norm{\partial_\nu v}^2_{L^{2}(\Sigma_T)}+\norm{w}^2_{H^{1}(\Sigma_T)}+\norm{\partial_\nu w}^2_{L^{2}(\Sigma_T)}) \nonumber\\
	& + N\norm{\partial_t\mathcal{A}(p_1,p_2)}^2_{L^2(Q_T)}.
\end{align}

Summing up two latter estimates, we obtain
\begin{align}
	&(1 - \frac{N\lambda^3}{\xi e^{2\lambda\sigma}}) ( \|v\|_{H^1(Q_T)}^2 + \|w\|_{H^1(Q_T)}^2 )\leq  N\bigl(\norm{\mathcal{A}(p_1,p_2)}^2_{L^2(Q_T)}+\norm{\partial_t\mathcal{A}(p_1,p_2)}^2_{L^2(Q_T)}\bigr)\nonumber\\
	&+N \frac{\lambda^3}{\xi} e^{2\lambda (d^2-c^2-3\sigma)} (\norm{v}^2_{H^{1}(\Sigma_T)}+\norm{\partial_\nu v}^2_{L^{2}(\Sigma_T)}+\norm{w}^2_{H^{1}(\Sigma_T)}+\norm{\partial_\nu w}^2_{L^{2}(\Sigma_T)}).\label{1.91}
\end{align}

Choose a sufficiently large $\lambda_1 = \lambda_1(\Omega, x_0, T, N, \sigma, \xi)$ such that
\[
1 - \frac{N\lambda^3}{\xi e^{2\lambda\sigma}} \geq \frac{1}{2},
\]
hence \eqref{1.91} implies that
\begin{equation} \label{1.92}
	\begin{aligned}
		&\|v\|_{H^1(Q_T)}^2 + \|w\|_{H^1(Q_T)}^2
	\leq N\bigl(\norm{\mathcal{A}(p_1,p_2)}^2_{L^2(Q_T)}+\norm{\partial_t\mathcal{A}(p_1,p_2)}^2_{L^2(Q_T)}\bigr)\\
	&+  N \frac{\lambda^3}{\xi} e^{2\lambda (d^2-c^2-3\sigma)} (\norm{v}^2_{H^{1}(\Sigma_T)}+\norm{\partial_\nu v}^2_{L^{2}(\Sigma_T)}+\norm{w}^2_{H^{1}(\Sigma_T)}+\norm{\partial_\nu w}^2_{L^{2}(\Sigma_T)}).
	\end{aligned}
\end{equation}
Estimating the right-hand side of \eqref{1.82} by \eqref{1.92}, we obtain
\begin{equation*}
	\begin{aligned}
		\norm{\hat{f}}_{H^1(\Omega)}^2&+ \norm{q}_{H^2(\Omega)}^2
	\leq N \lambda^5 e^{2\lambda (d^2-c^2-3\sigma)} (\norm{v}^2_{H^{1}(\Sigma_T)}+\norm{\partial_\nu v}^2_{L^{2}(\Sigma_T)}+\norm{w}^2_{H^{1}(\Sigma_T)}
	\\ &+\norm{\partial_\nu w}^2_{L^{2}(\Sigma_T)})+ N \lambda^2 e^{-2\lambda\sigma}\bigl(\norm{\mathcal{A}(p_1,p_2)}^2_{L^2(Q_T)}+\norm{\partial_t\mathcal{A}(p_1,p_2)}^2_{L^2(Q_T)}\bigr).
	\end{aligned}
\end{equation*}
By \eqref{mathcalA} and the boundedness of the coefficients $h^{(2)}$, $h^{i}$, $h^{(0)}$ and their time derivatives, the last term is bounded by $N\lambda^2 e^{-2\lambda\sigma}\norm{q}^2_{H^2(\Omega)}$. Taking $\lambda$ large enough so that $N\lambda^2 e^{-2\lambda\sigma}\le\frac{1}{2}$, it is absorbed into the left-hand side, and we obtain
\begin{equation*}
	\begin{aligned}
		\norm{\hat{f}}_{H^1(\Omega)}^2 + \norm{q}_{H^2(\Omega)}^2\leq &N \lambda^5 e^{2\lambda (d^2-c^2-3\sigma)} (\norm{v}^2_{H^{1}(\Sigma_T)}+\norm{\partial_\nu v}^2_{L^{2}(\Sigma_T)}\\
		&+\norm{w}^2_{H^{1}(\Sigma_T)}+\norm{\partial_\nu w}^2_{L^{2}(\Sigma_T)}).
	\end{aligned}
\end{equation*}
Finally, on $\Sigma_T$ we have $g_i=u_i|_{\Sigma_T}$,
$v=\tilde g=p_1\partial_t^3(g_1-g_2)$,
$w=\tilde g_t=p_1\partial_t^4(g_1-g_2)$, and
$\partial_\nu v=\tilde h=p_1\partial_t^3(h_1-h_2)+\frac{\partial p_1}{\partial\nu}\partial_t^3(g_1-g_2)$,
$\partial_\nu w=\tilde h_t=p_1\partial_t^4(h_1-h_2)+\frac{\partial p_1}{\partial\nu}\partial_t^4(g_1-g_2)$.
From $\partial_tu(0,\cdot)=0$, we obtain $\widetilde{g}(0,\cdot)=\widetilde{h}(0,\cdot)=0$, which allows us to control the norm of the original function using the norm of its time derivative. Hence, using \eqref{g} and the trace theorem,
\[
\norm{v}_{H^1(\Sigma_T)} \le N\norm{\tilde g_t}_{H^1(\Sigma_T)}\le N\norm{\partial_t^4(g_1-g_2)}_{H^1(\Sigma_T)},
\]
\[
\norm{w}_{H^{1}(\Sigma_T)}\le N\norm{\partial_t^4(g_1-g_2)}_{H^{1}(\Sigma_T)},
\]
and
\[
\norm{\partial_\nu v}_{L^{2}(\Sigma_T)}=\norm{\widetilde{h}}_{L^{2}(\Sigma_T)}\leq N\norm{\widetilde{h}_t}_{L^{2}(\Sigma_T)},
\]
\[
\norm{\partial_\nu w}_{L^{2}(\Sigma_T)}\le N\bigl(\norm{\partial_t^4(h_1-h_2)}_{L^{2}(\Sigma_T)}+\norm{\partial_t^4(g_1-g_2)}_{L^{2}(\Sigma_T)}\bigr).
\]
Substituting these bounds into the previous inequality, the theorem is proved.

\section*{Acknowledgements}

The research of S.M.~is partially supported by the NSFC (No.~12301540).

\vspace{.3cm}
\noindent \textbf{Data Availability:} No data were generated or analysed in this study.

\vspace{.3cm}
\noindent \textbf{Conflict of Interest:} The authors declare that there is no conflict of interest regarding the publication of this paper.


{


}

\end{document}